\documentclass{amsart}
\usepackage{amsmath,amsthm,amsfonts,amssymb,latexsym,mathrsfs,graphicx}

\usepackage{hyperref}

\usepackage{enumerate}
\usepackage[shortlabels]{enumitem}
\usepackage{color}

\usepackage{tikz}
\usetikzlibrary{decorations.pathreplacing}

\usepackage{ytableau}
\newtheorem{theorem}{Theorem}[section]

\newtheorem{lemma}[theorem]{Lemma}
\newtheorem{remark}[theorem]{Remark}

\theoremstyle{definition}

\newtheorem*{ThmA}{Theorem A}
\newtheorem*{ThmB}{Theorem B}
\newtheorem*{ThmC}{Theorem C}
\newtheorem*{ThmD}{Theorem D}

\newcommand{\Irr}{{\mathrm {Irr}}}

\newcommand{\Aut}{{\mathrm {Aut}}}

\newcommand{\Syl}{\mathrm{Syl}}
\newcommand{\Van}{\mathrm{Van}}
\newcommand{\Nv}{\mathrm{Nv}}

\begin{document}
	
	\title{common non-zero  graphs of groups  with  few edges}
	
	\author[Z. Akhlaghi]{Zeinab Akhlaghi}
%	\author[K. Azizi]{Kamal Aziziheris}
	\author[S. Y. Madanha]{Sesuai Y. Madanha}
	\address{Corresponding author: Zeinab Akhlaghi, Faculty of Math. and Computer Sci., \newline Amirkabir University of Technology (Tehran Polytechnic), 15914 Tehran, Iran.}
		%\newline
	%	and \newline
	%	School of Mathematics,\newline
	%	Institute for Research in Fundamental Science(IPM)
	%	P.O. Box:19395-5746, Tehran, Iran.}
	\email{z\_akhlaghi@aut.ac.ir}
	
%	\address{Kamal Aziziheris, Department of Pure Mathematics,\newline  Faculty of Mathematical Sciences, University
%		of Tabriz, Tabriz, Iran\newline
%		and \newline
%		School of Mathematics,\newline
%		Institute for Research in Fundamental Science(IPM)
%		P.O. Box:19395-5746, Tehran, Iran.}
%	\email{azizi@tabrizu.ac.ir}
\address{Sesuai Y. Madanha, Department of Mathematics and Applied Mathematics,\newline University of Pretoria, Hatfield, Pretoria, 0028, South Africa}
\email{sesuai.madanha@up.ac.za}
%\thanks{This work is based upon research funded by Iran National Science Foundation (INSF) under project No  40505623.}
%\thanks{The first author also would like to thank the Department of Mathematics and Applied Mathematics at the University of Pretoria for their hospitality during her stay there in May and early June 2025.}
%\thanks{	The second author  is supported by  a grant from IPM (No. 1404200020).}

\subjclass[2000]{20C15,20D15}

%\MSC[MSC 2010]{20C15, 20D10, 20D15}
\begin{abstract}
Let \( G \) be a finite group, and consider its common non-zero graph, which we  denote by  \( \Gamma_{nv}(G) \). In this graph, the vertices represent the non-linear irreducible characters of \( G \). There is an edge connecting two distinct vertices \( \chi_1 \) and \( \chi_2 \) if there is a vanishing element \( g \in G \) such that \( \chi_1(g) \chi_2(g) \neq 0 \). A finite group \( G \) is described as a strongly \( \mathcal{H}_1' \)-group if the set of vanishing elements of each of its non-linear irreducible character is equal to   the set of vanishing elements of \( G \).
In this paper, we prove Conjecture 1 from \cite{ourself}, which states that if \( \Gamma_{nv}(G) \) is null (has no edge), then \( G \) must either be a strongly \( \mathcal{H}_1' \)-group or a Frobenius group, whose Frobenius complement is isomorphic to \( Q_8 \).
Additionally, we show that if there are no triangles in \( \Gamma_{nv}(G) \), this implies that the group is solvable. We also look at the situation where the common non-zero graph of a finite group \( G \) contains only one edge. This leads us to conclude that \( G \) is either isomorphic to \( S_4 \); or that the single edge in \( \Gamma_{nv}(G) \) is of the form \( \{\theta, \zeta\theta\} \) for some \( \zeta \in \Irr(G/G') \) and \( \theta \in \Irr(G|G') \), $G$ is a $p$-nilpotent group for some prime $p$ and  for all characters \( \chi \in \Irr(G|G') - \{\theta, \zeta\theta\} \), we find \( \Van(\chi) = \Van(G) \). Lastly, our study also explores groups whose common non-zero graph forms a star shape, concluding that such configurations can only occur if \( G \) has exactly one non-linear irreducible character.
\end{abstract}
\keywords{vanishing conjugacy classes, vanishing off subgroups, non-vanishing elements}

%\maketitle

\maketitle

\section{Introduction}

The study of characters of finite groups is a central theme in representation theory, with deep connections to the structure of the underlying group. One of the most fundamental results in this area, due to Burnside, states that every non-linear irreducible character must vanish at some group element. This simple yet profound observation has inspired a rich line of research investigating the relationship between the vanishing behavior of characters and the algebraic structure of the group. In particular, the set of elements where a character vanishes—and, more broadly, the collection of all vanishing elements of a group—carries substantial information about the group's structure.

In this paper, we work on  finite group \( G \) with  emphasis on its complex characters. We denote the set of irreducible characters of \( G \) as \( \Irr(G) \) and use \( \chi \) to refer to an  irreducible complex character. An element \( g \in G \) is  a zero of \( \chi \) when \( \chi(g) = 0 \). Additionally, we call \( g \) a vanishing element of \( G \) if there exists some \( \chi \in \Irr(G) \) that vanishes at \( g \). Note that this situation cannot occur when \( g \) is in the center denoted as \( \mathbf{Z}(G) \) of \( G \). Consequently, we refer to the conjugacy class of \( g \) as \( g^G \), labeling it a vanishing conjugacy class of \( G \), if $g$ is a vanishing element of $G$.

It is known that linear characters do not vanish at any element. However, as it is mentioned above that every non-linear irreducible character must vanish at some element. This important result was refined by Malle, Navarro, and Olsson \cite{MNO}, who showed that we can specifically select such an element to be of prime-power order. Additionally, it’s well-established that if \( G \) has an irreducible character that vanishes on all non-central elements, then \( G \) is solvable, and its structure is completely defined (see \cite{centraltype}). This result leads the authors of  \cite{ourself} to consider the structure of groups with a more expansive trait: those that have an irreducible character vanishing on all vanishing conjugacy classes.

 %there exists a subset \( C \subseteq \Irr(G) \) with \( |C| \leq k \), where every vanishing conjugacy class acts as a zero for at least one of the characters in \( C \). This prompts us to explore whether \( G \) is solvable because it qualifies as an \( \mathcal{H}'_1 \)-group.    The evidence leans toward a negative conclusion regarding this question. For example, when we examine the non-solvable Frobenius group \( G = C_{11}^2 \rtimes \SL_2(5) \), we find that it meets the requirements of a \( \mathcal{H}'_1 \)-group.  In \cite{gagula}, we can see more examples of non-solvable $\mathcal{H}_1'$-groups.  In a study from 1983 \cite{gagula}, Gagola outlined all groups with a character vanishing on all but two conjugacy classes. It was established that such groups possess a single unique minimal normal subgroup, with the two special classes being non-vanishing. Thus, these groups can be classified as \( \mathcal{H}'_1 \)-groups.
We denote the set of zeros or vanishing elements  of a character \( \chi \) by \( {\rm Van}(\chi) \). Similarly, we denote the set of vanishing elements of the group \( G \) by \( {\rm Van}(G) \). This can be articulated mathematically as follows: $$ {\rm Van}(G) = \bigcup_{\chi \in {\rm Irr}(G)} {\rm Van}(\chi). $$

Moreover, by $\Nv(\chi)$, where $\chi\in \Irr(G)$,  we mean the set of elements of $G$ that $\chi$ is not  vanishing on them and  $\Nv(G)=G-\Van(G)=\bigcap_{\chi\in \Irr(G)} \Nv(\chi)$. In the paper \cite{ourself}, a group \( G \) is identified as a strongly \( \mathcal{H}'_k \)-group if it meets the requirement that for every character \( \chi \) within \( \Irr(G|G') \), there are \( k-1 \) characters (not necessarily distinct) \( \chi^c_{i} \in \Irr(G/\ker\chi) \) for \( 1 \leq i \leq k-1 \), such that the union of the vanishing set of \( \chi \) and the vanishing sets of these \( k-1 \) characters equals the vanishing set of \( G \). It’s evident that if \( G \) qualifies as a strongly \(  \mathcal{H}'_k \)-group, then it also qualifies as a strongly  \( \mathcal{H}'_m \)-group for any \( m \geq k \). The authors also provide a classification of strongly \( \mathcal{H}'_1 \)-groups. Furthermore, they achieve a complete classification of non-solvable strongly \( \mathcal{H}'_2 \)-groups, revealing that there is only one family of such groups, specifically \( A \times A_7 \), where \( A \) is an abelian group.

%Thus, we can say that $G$ qualifies as an $\mathcal{H}'_1$-group if and only if ${\rm Van}(G) = {\rm Van}(\chi)$ for some character $\chi \in {\rm Irr}(G)$. In contrast, $G$ is identified as an $\mathcal{H}'_2$-group if ${\rm Van}(G)$ can be represented as the union of the vanishing sets associated with two distinct irreducible characters $\chi$ and $\psi$ from ${\rm Irr}(G)$. 
%For instance, according to \cite{Atlas}, we find that ${\rm Van}(S_{3}) = {\rm Van}(\chi)$, where the character $\chi$ is of degree 2 within ${\rm Irr}(S_{3})$. On the other hand, ${\rm Van}(A_{5})$ can be expressed as ${\rm Van}(\chi_{1}) \cup {\rm Van}(\chi_{2}) \cup {\rm Van}(\chi_{3})$, with $\chi_{1}, \chi_{2}, \chi_{3} \in {\rm Irr}(A_{5})$ having degrees 3, 4, and 5, respectively. This leads us to conclude that $S_{3}$ is classified as an $\mathcal{H}'_{1}$-group, while $A_{5}$ is recognized as an $\mathcal{H}'_{3}$-group, but it does not fall under the category of any $\mathcal{H}'_{i}$-group for $1 \leq i \leq 2$.
%For further illustrations of $\mathcal{H}_1'$-groups, one could examine the character table of groups such as $G \cong C_3 \rtimes D_4$, $M_9$, or $C_5^2 \rtimes Dic_3$. Among the almost simple groups, both $A_7$ and $S_7$ are classified as $\mathcal{H}_2'$-groups.

To gain a deeper insight into the characteristics of strongly $\mathcal{H}_k'$-groups, the authors in \cite{ourself} introduced the concept of the {\it common non-zero graph} associated with a group $G$, which is symbolized as $\Gamma_{nv}(G)$. In this graph, the vertices correspond to the non-linear irreducible characters of $G$. An edge connects two distinct irreducible characters, $\chi_1$ and $\chi_2$,  if there exists an element $g \in \Van(G)$ such that $\chi_1(g) \chi_2(g) \neq 0$. 
This concept parallels the common-zero graph explored in \cite{hung}. Notably, if $G$ is identified as a strongly $\mathcal{H}_1'$-group, then $\Gamma_{nv}(G)$ is a null graph (a graph with no edge). However, the converse is not necessarily valid. For example, the Frobenius group $C_3^2:Q_8$ exhibits a null graph but does not qualify as a strongly $\mathcal{H}_1'$-group. It is demonstrated in \cite[Theorem A]{ourself} that:

\begin{lemma}\label{thA}
	Let $G$ be  a finite group. Then $\Gamma_{nv}(G)$ is null if and only if one of the following occurs:
	
	1) $G$ is a strongly $\mathcal{H}_1'$-group; or

	2) $G$ has a normal subgroup $K$ such that $\Irr(G/K|G'K/K)=\{\theta\}$, $\Van(G)\cap G'K\not =\emptyset$  and  $\Van(\chi) = \Van(G)$ for all $\chi\in \Irr(G|G')-\{\theta\}$.
\end{lemma}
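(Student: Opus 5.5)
The plan is to prove the two implications separately. Throughout I will use one elementary fact: if $\lambda$ is linear and $\chi\in\Irr(G)$, then $\Van(\lambda\chi)=\Van(\chi)$. I will also restate nullness as follows: $\Gamma_{nv}(G)$ is null if and only if, for every pair of distinct non-linear $\chi_1,\chi_2$, we have $\Van(G)\subseteq \Van(\chi_1)\cup\Van(\chi_2)$.

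\emph{Sufficiency.} In case 1) every non-linear $\chi$ satisfies $\Van(\chi)=\Van(G)$, so no $g\in\Van(G)$ has $\chi_1(g)\chi_2(g)\neq 0$. In case 2), inflate $\theta$ to $G$; it is non-linear because it lies in $\Irr(G/K|G'K/K)$. Any pair of non-linear characters other than $\{\theta,\chi\}$ consists of two characters vanishing on all of $\Van(G)$. A pair $\{\theta,\chi\}$ with $\chi\neq\theta$ has $\chi$ vanishing on $\Van(G)$. In either case the pair spans no edge.

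\emph{Necessity.} Assume $\Gamma_{nv}(G)$ is null and $G$ is not strongly $\mathcal H_1'$.
\begin{enumerate}[(i)]
\item Pick a non-linear $\theta$ and $g\in\Van(G)$ with $\theta(g)\neq 0$. Nullness forces every other non-linear character to vanish at $g$.
\item If $\lambda\theta\neq\theta$ for some linear $\lambda$, then $\lambda\theta$ is also non-zero at $g$, which gives an edge. Hence $\theta$ is fixed by $\Irr(G/G')$, so $\theta$ vanishes on $G-G'$, and therefore $g\in G'$.
\item Show that $\theta$ is the \emph{only} non-linear character with $\Van(\theta)\neq\Van(G)$. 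If $\psi\neq\theta$ were non-zero at some $h\in\Van(G)$, steps (i)--(ii) applied to $\psi$ would make every non-linear character other than $\psi$ vanish at $h$. I would combine this with the second orthogonality relation at $g$ and at $h$; at $g$ it reads $|C_G(g)|=|G:G'|+|\theta(g)|^2$, since only the linear characters and $\theta$ are non-zero there. I also expect to use products such as $\theta\bar\theta$ or $\theta\psi$, evaluated at $g$ and $h$, to force a contradiction. Granting this, $\Van(\chi)=\Van(G)$ for all $\chi\in\Irr(G|G')-\{\theta\}$.
\item Put $K=\ker\theta$. Since $g\in G'\subseteq G'K$, we get $\Van(G)\cap G'K\neq\emptyset$. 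It remains to show that $\theta$ is the only character of $\Irr(G/K|G'K/K)$.
\end{enumerate}

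For step (iv), set $\bar G=G/K$. Then $\theta$ is faithful on $\bar G$, and every non-linear $\psi\in\Irr(\bar G)$ other than $\theta$ vanishes on $\Van(G)$. This set contains the preimage of $\Van(\bar G)$, so such a $\psi$ vanishes on all of $\Van(\bar G)$. The goal is to show no such $\psi$ exists. Suppose one does. I would look at the non-vanishing element $\bar g\in\Van(\bar G)$ of $\theta$ (if $g\notin K$) and use faithfulness: the powers $\theta^n$ contain every irreducible character of $\bar G$ as a constituent. I would try to expand $\psi$ inside $\theta^{n}$, evaluate at $\bar g$, and derive a contradiction with $\psi(\bar g)=0$ combined with column orthogonality at $\bar g$. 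The degenerate case $g\in K$ must be handled separately; there $\theta(g)=\theta(1)$.

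Step (iv), together with the uniqueness claim in step (iii), is where I expect the real difficulty. I would try first the orthogonality-plus-faithfulness argument described above, and fall back on a direct argument showing that $\bar G$ is then a group with exactly one non-linear character if that fails.
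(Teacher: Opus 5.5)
Your sufficiency argument and steps (i)--(ii) are correct. The paper only quotes this lemma from \cite{ourself}, so there is no in-paper proof to compare with, and I am judging your argument on its own. The necessity direction, however, is not proved. Step (iii) is explicitly ``granted'' and step (iv) is a list of things you ``would try'', yet these two steps are the whole content of the lemma.

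The missing idea is to compute $\theta\bar\theta$ exactly from the column of $g$. Put $m=|G:G'|$ and $d=\theta(1)$. The relation $|C_G(g)|=m+|\theta(g)|^2$ that you wrote does not help. What you need is orthogonality of the column of $g$ against the column of $1$. Since $g\in G'$ and only the linear characters and $\theta$ are non-zero at $g$, this gives $m+d\,\theta(g)=0$. So $\theta(g)=-m/d$ is a non-zero rational. In particular $\theta(g)<0<\theta(1)$, so $g\notin\ker\theta$ and your ``degenerate case'' never occurs. Since $\lambda\theta=\theta$ for all $\lambda\in\Irr(G/G')$, we have $[\theta\bar\theta,\lambda]=1$. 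Write $a=[\theta\bar\theta,\theta]$. Evaluating $\theta\bar\theta$ at $g$ gives $m^2/d^2=m-am/d$, that is, $a=d-m/d$. Comparing degrees then forces every other constituent to be absent:
\[
\theta\bar\theta=\sum_{\lambda\in\Irr(G/G')}\lambda+\Bigl(d-\frac{m}{d}\Bigr)\theta .
\]

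Both missing steps follow at once.
\begin{itemize}
\item \emph{Step (iii).} Evaluate the identity at any $x\in G'$ to get $|\theta(x)|^2=m+(d-m/d)\theta(x)$. So $\theta(x)=0$ would force $m=0$, and hence $\theta$ has no zero on $G'$. Now suppose some non-linear $\psi\neq\theta$ were non-zero at some $h\in\Van(G)$. The same argument applied to the pair $(\psi,h)$ shows that $\psi$ has no zero on $G'$. But $g\in G'$ and $\psi(g)=0$ by (i), a contradiction.
\item \emph{Step (iv).} The character $\bar\theta$ is non-linear and $\bar\theta(g)=\theta(g)\neq 0$, so $\bar\theta=\theta$ by (i). Hence $\theta^2$ is a non-negative integer combination of $\Irr(G/G')\cup\{\theta\}$, and by induction so is every power $\theta^n$. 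Apply the Burnside--Brauer theorem \cite[Theorem 4.3]{Isaacs} to the faithful character $\theta$ of $G/K$, where $K=\ker\theta$. Every member of $\Irr(G/K)$ is a constituent of some $\theta^n$, so $\Irr(G/K|G'K/K)=\{\theta\}$. Finally, $g\in G'\subseteq G'K$ gives $\Van(G)\cap G'K\neq\emptyset$.
\end{itemize}

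Your instinct to expand characters inside powers of a faithful $\theta$ and to use products like $\theta\bar\theta$ was the right one. Without the explicit decomposition of $\theta^2$, though, that strategy has nothing to work with.
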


Additionally, in \cite[Theorem B]{ourself} the structure of strongly $\mathcal{H}_1'$-groups are determined as follows: 

\begin{lemma}\label{ThmB}
	Let $ G $ be a  finite non-abelian  group, where $G$ is a strongly $\mathcal{H}_1'$-group.   Then $ G $ is of Fitting height at most two and one of the following holds:
	\begin{itemize}
		\item[(a)]  $ G/\textbf{Z}(G) $ and $ G' $ are elementary abelian $ p $-groups for some prime $ p $ and $ G $ is of nilpotence class $ 2 $;
		\item[(b)] $ G/\textbf{F}(G) $ is cyclic and there exists a normal  subgroup $ R\subseteq \textbf{F}(G) $ such that $ G/R $ is a Frobenius group with an elementary abelian kernel and a cyclic Frobenius complement;
		%	\item[(c)]  $ G/\textbf{F}(G) $ is an elementary abelian $ p $-group for some prime $ p $ and there exists normal subgroup $ R\subseteq \textbf{F}(G) $ such that $ G/R $ is a $ VZ $-group.
	\end{itemize}

\end{lemma}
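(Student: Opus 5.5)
This is Theorem B of \cite{ourself}, so the paper imports it; if I had to re-prove it, I would proceed as follows.

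The starting point is the basic observation about a strongly $\mathcal{H}_1'$-group $G$. For every $\chi\in\Irr(G|G')$ we have $\Van(\chi)=\Van(G)$. Take any non-linear $\chi$, which lies in $\Irr(G|G')$. Every element on which some irreducible character vanishes is then a zero of $\chi$, and $\chi$ is non-zero on every element of $\Nv(G)$. I would first record three consequences.

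(i) Each $\chi\in\Irr(G|G')$ vanishes on $G-\Nv(G)$, and $\Nv(G)$ is a union of cosets of $G'$-related subgroups. Indeed, if $\chi$ vanishes on $g$, then it vanishes on every element $g'$ with $\lambda$-twists, since $\chi\lambda$ is also in $\Irr(G|G')$ and so has the same zero set.

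(ii) Each non-linear $\chi$ vanishes on $G-\ker$-related cosets. More precisely, for a normal subgroup $N$ with $\chi\in\Irr(G|N)$, I would show that $\chi$ vanishes off $\Nv(G)$. Then, using the Malle–Navarro–Olsson result \cite{MNO}, all elements of prime-power order outside $\Nv(G)$ are common zeros.

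(iii) By the standard result on non-vanishing elements (Isaacs–Navarro–Wolf), a non-vanishing element of a solvable group lies in $\mathbf{F}(G)$ modulo $\mathbf{F}(G)$-related structure. First I need solvability: by the non-solvable analysis, a minimal non-abelian simple section would give characters of $S$ with differing zero sets on $p$-elements, using that simple groups have $p$-defect zero characters by Granville–Ono, a contradiction.

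Next I would split into two cases according to whether $G'\le \mathbf{Z}(G)$ interacts nilpotently.

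Case (a): $G$ is nilpotent. I reduce to a single Sylow subgroup, since products of characters from two different Sylows would produce distinct zero sets. So $G$ is a $p$-group, and all non-linear characters share the same zeros. Equal zero sets with $\chi\bar\chi$ force each $\chi$ to vanish off $\mathbf{Z}(\chi)$, and all the centers coincide. This gives a VZ-type condition, and the known structure of VZ-groups yields class $2$ with $G/\mathbf{Z}(G)$ and $G'$ elementary abelian.

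Case (b): $G$ is not nilpotent. I would take $R$ maximal normal with $G/R$ non-nilpotent, or alternatively quotient by the Frattini subgroup and by $\mathbf{F}(G)'$, to obtain a group with a unique minimal normal subgroup $V$ that is not central. The characters in $\Irr(G|V)$ are induced from $\mathbf{F}$, and equality of their vanishing sets with those of the characters inflated from $G/V$ forces every element outside $\mathbf{F}(G)$ to act fixed-point-freely on $V$. This makes $G/R$ Frobenius with elementary abelian kernel and cyclic complement; the complement is cyclic because of abelian-quotient arguments. From this I would derive that $G/\mathbf{F}(G)$ is cyclic and that the Fitting height is at most $2$.

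I expect the main obstacle to be showing that $G/\mathbf{F}(G)$ is cyclic and that the Frobenius action exists. Here I would first try a Gallagher-type argument: twist characters by linear characters of $G/\mathbf{F}(G)$ and compare their zeros on elements $xg$ with $g\in\mathbf{F}(G)$.
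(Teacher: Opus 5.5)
The paper does not prove this statement; it quotes it from \cite[Theorem B]{ourself}. Your sketch therefore has to stand on its own, and as written it has genuine gaps at every substantive step except the correct reduction of the nilpotent case to a $p$-group.

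\begin{itemize}
\item Items (i) and (ii) only restate the hypothesis. Twisting by a linear character never changes a zero set, and ``$\chi$ vanishes off $\Nv(G)$'' is the definition. Item (iii) is not a precise statement and is never used.
\item The solvability step names Granville--Ono but never produces a contradiction inside $G$.
\item In Case (a), $\chi\bar\chi$ is reducible and says nothing about where $\chi$ vanishes. What you actually need is $\Van(G)=G-\mathbf{Z}(G)$ for nilpotent $G$; this is \cite[Lemma 2.9]{BDS}, used in Lemma~\ref{pgroup}.
\item Case (b) is circular. Characters in $\Irr(G/R\,|\,V)$ are induced from $V$ only if $G/R$ is already Frobenius with kernel $V$. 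Moreover, when $H\cong (G/R)/V$ is abelian, which is the case that actually occurs, the characters inflated from $(G/R)/V$ are linear. Comparing vanishing sets with them cannot force a fixed-point-free action.
\item Most importantly, maximality of $R$ says nothing about whether $R\subseteq\mathbf{F}(G)$. That is exactly the Fitting-height assertion, and you leave it, together with the cyclicity of $G/\mathbf{F}(G)$, as an admitted open obstacle.
\end{itemize}

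The missing idea is the second orthogonality relation. If $G$ is strongly $\mathcal{H}_1'$ and $g\in\Van(G)$, every non-linear irreducible character vanishes at $g$. Hence $|C_G(g)|=|G:G'|$, so $g^G=gG'$, and in particular $\Van(G)\cap G'=\emptyset$, since an element of $G'$ would be conjugate to $1$. The paper uses this device repeatedly (Lemma~\ref{yash}, Step~3 of the proof of Theorem~A), and it repairs several of your steps.

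\begin{itemize}
\item Solvability: take a non-abelian chief factor $R/S\cong T^k$ with $R\subseteq G^{(\infty)}\subseteq G'$. For a prime $p\geq 5$ dividing $|T|$, Lemma~\ref{ono} gives a character of $p$-defect zero. Characters over it vanish on $p$-singular elements of $R$, so these lie in $\Van(G)\cap G'$, a contradiction.
\item Case (a): for $g\notin\mathbf{Z}(G)$ you get $[g,G]=G'$. This forces $G'\subseteq\mathbf{Z}(G)$. Then $[g^p,G]=[g,G]^p$ gives $g^p\in\mathbf{Z}(G)$ and $\exp G'=p$, with no VZ-theory needed.
\item Structure of $G/R$: write $G/R=V\rtimes H$ with $H$ nilpotent acting faithfully and irreducibly on $V$. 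If $H$ were non-abelian, pick $1\neq z\in\mathbf{Z}(H)$. Then $C_V(z)$ is $H$-invariant, so $C_V(z)=1$ and $C_{G/R}(z)=H$. Second orthogonality now forces every character over $V$ to vanish at $z$, while the non-linear characters of $H$ do not, contradicting the hypothesis.
\item Hence $H$ is abelian, so it is cyclic by Schur's lemma. Every $1\neq h\in H$ acts fixed-point-freely because $C_V(h)$ is an $H$-submodule.
\end{itemize}

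You would still have to supply a real argument that $K=G'R$ is nilpotent. That gives $\mathbf{F}(G)=K$, and with it $R\subseteq\mathbf{F}(G)$, $G/\mathbf{F}(G)$ cyclic, and Fitting height $2$.
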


Moreover, to complete the classification  of groups whose common non-zero graph is null, in the same paper it is conjectured that (\cite[Conjecture 1]{ourself}): 
 Either $G$ is a strongly $\mathcal{H}_1'$-group, or $G$ is a Frobenius group whose Frobenius complements are isomorphic to  $Q_{8}$.
As one of our main results we give a complete  affirmative answer to this  conjecture:
 
  \begin{ThmA}
  Let $G$ be a finite non-abelian group. Then $\Gamma_{nv}(G)$ is null if and only if either $G$ is a strongly $\mathcal{H}_1'$-group, or $ G $ is a Frobenius group whose Frobenius kernel is abelian  and the Frobenius complement is $ Q_{8} $. 
  \end{ThmA}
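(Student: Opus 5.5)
The plan is to prove the two directions separately. The forward direction, starting from the dichotomy in Lemma~\ref{thA}, carries most of the work.

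\emph{Sufficiency.} Let $G=K\rtimes Q_8$ be a Frobenius group with $K$ abelian, and let $z$ be the involution of the complement $Q_8$.

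The irreducible characters of $G$ are of three kinds:
\begin{itemize}
\item the four linear characters inflated from $G/K\cong Q_8$;
\item the character $\theta$ of degree $2$ inflated from $Q_8$;
\item the induced characters $\lambda^G$ for $1\neq\lambda\in\Irr(K)$, each of degree $8$ and each vanishing on $G-K$.
\end{itemize}

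We want to apply case 2) of Lemma~\ref{thA} with the normal subgroup taken to be $K$.
\begin{itemize}
\item $\Irr(G/K\mid G'K/K)=\{\theta\}$, because $Q_8$ has a unique non-linear irreducible character.
\item $z\in G'K$ and $\lambda^G(z)=0$, so $\Van(G)\cap G'K\neq\emptyset$.
\item It remains to show $\Van(\lambda^G)=\Van(G)$, which amounts to $\Van(G)=G-K$.
\end{itemize}

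Every element outside $K$ is a zero of every $\lambda^G$, so the real content is that no element of $K$ is vanishing. The characters $\theta$ and the linear ones are clearly non-zero on $K$. For $1\ne k\in K$ we have $\lambda^G(k)=\sum_{x\in Q_8}\lambda(k^x)$, which is a sum of $8$ roots of unity of odd order. The set of these roots is stable under inversion, since $z$ acts on $K$ by inversion.

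I would show this sum cannot be $0$ using the structure of vanishing sums of roots of unity. Such a sum is an integer combination of full $p$-cycles of roots of unity, and the $\langle z\rangle$-pairing together with the fixed-point-free action of $Q_8$ on the $\lambda$-orbit should rule this out. If the direct argument does not close, I would fall back on a known nonvanishing result for kernels of Frobenius groups with abelian kernel.

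\emph{Necessity.} Assume $\Gamma_{nv}(G)$ is null and $G$ is not a strongly $\mathcal{H}_1'$-group. By Lemma~\ref{thA} we are in case 2), with a normal subgroup $K$. Set $\bar G=G/K$; then $\bar G$ has exactly one non-linear irreducible character $\theta$.

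By Seitz's classification, $\bar G$ is either an extraspecial $2$-group, or a Frobenius group with elementary abelian kernel of order $p^n$ and cyclic complement of order $p^n-1$. The condition $\Van(G)\cap G'K\ne\emptyset$ gives a vanishing element $g$ in $G'K$.

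Next I use that every $\chi\in\Irr(G\mid G')-\{\theta\}$ satisfies $\Van(\chi)=\Van(G)$. This forces all such $\chi$ to vanish wherever $\theta$ vanishes. Applying this to the characters of $\Irr(G/K')$, and to characters lying over $K$, should restrict which elements can be non-vanishing. I would then use Lemma~\ref{ThmB}-type arguments (for instance the theorem of Isaacs--Navarro--Wolf on non-vanishing elements) to see that the characters outside $\Irr(G/K)$ vanish on all of $G-K$. This means $K$ is a union of kernels and $G$ is Frobenius-like with kernel $K$.

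In the Frobenius case for $\bar G$, I expect to show that the vanishing set of $\theta$ coincides with that of the other characters, which would make $G$ a strongly $\mathcal{H}_1'$-group and give a contradiction. In the extraspecial case, a non-linear character with $\Van(\chi)\supseteq G-K$ makes $G$ a Frobenius group with complement isomorphic to $\bar G$. Since a Frobenius complement has cyclic or generalized quaternion Sylow $2$-subgroups, $\bar G$ must be $Q_8$. Finally, abelianness of $K$ comes from requiring that no element of $K$ is vanishing: a non-abelian kernel would admit a character vanishing on some element of $K$, creating an edge with $\theta$.

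The main obstacle is the middle step: turning the equality $\Van(\chi)=\Van(G)$ for all $\chi\ne\theta$ into the statement that $G$ is a Frobenius group with kernel $K$. This requires careful control of where characters lying over non-trivial characters of $K$ vanish. I would first try Clifford theory combined with the fact that $\theta$ is non-zero on $z$ while the other characters must vanish there.
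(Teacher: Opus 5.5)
There are two genuine gaps. In the sufficiency direction you try to prove something that is false, and in the necessity direction the decisive step is missing.

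\textbf{Sufficiency.} You reduce everything to showing that no $1\neq k\in K$ is vanishing when $G=K\rtimes Q_8$ is Frobenius with $K$ abelian. This cannot be completed, because it is false in general. Neither the $\langle z\rangle$-pairing nor the fixed-point-free action excludes the Lam--Leung pattern $8=3+5$, and no general non-vanishing result exists to fall back on.

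Here is a counterexample. Let $U_r$ be the faithful $2$-dimensional $\mathbb{F}_rQ_8$-module ($Q_8\le SL_2(r)$, $r=3,5$), and put $K=(U_3\oplus U_3)\times(U_5\oplus U_5)$, on which $Q_8$ acts fixed-point-freely. Since $U_r$ occurs twice in $\mathbb{F}_rQ_8$, there are $a\in U_3^2$ and $b\in U_5^2$ with $a,ia,ja,ka$ linearly independent and $b,ib,jb,kb$ linearly independent. Choose linear forms $f,g$ with
\[
f(a)=f(ia)=f(ja)=0,\quad f(ka)=1,\qquad g(b)=g(kb)=0,\quad g(ib)=1,\quad g(jb)=2,
\]
and set $\lambda=\omega^{f}\times\zeta^{g}$, where $\omega,\zeta$ are primitive cube and fifth roots of unity. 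Since $z$ acts as $-1$,
\[
\lambda^G(a,b)=\sum_{h\in Q_8}\lambda\bigl(h\cdot(a,b)\bigr)=2+(\zeta+\zeta^{-1})+(\zeta^{2}+\zeta^{-2})+(\omega+\omega^{-1})=0 .
\]
So $(a,b)\in K\cap\Van(G)$ while $\theta(a,b)=2$. Column orthogonality gives $4+4+8\sum_\mu\mu^G(a,b)=0$, so some $\mu^G(a,b)\neq 0$ and $\{\theta,\mu^G\}$ is an edge.

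Hence this half of the statement only holds under the extra condition $K\subseteq\Nv(G)$. That holds for instance when $K$ is a $p$-group, since then $\lambda^G(k)\equiv 8$ modulo a prime over $p$. It is also exactly what nullity forces in the other direction. Note that the paper's proof argues only necessity, so it does not cover this direction either.

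\textbf{Necessity.} You leave the central step, getting from $\Van(\chi)=\Van(G)$ for $\chi\neq\theta$ to a Frobenius structure, as ``the main obstacle''. The tools you cite (Isaacs--Navarro--Wolf, ``Lemma~\ref{ThmB}-type arguments'') do not supply it. The paper's argument runs as follows.
\begin{enumerate}[(i)]
\item A vanishing $p$-element is a zero of every $\chi\neq\theta$, so $p\mid\chi(1)$ by \cite[Corollary 2.2]{BDS}. Thompson's theorem, or Kazarin--Berkovich plus Lemma~\ref{Frob} when $p\nmid\theta(1)$, then makes $G$ $p$-nilpotent.
\item The twisting trick: $\Van(\lambda\theta)=\Van(\theta)\neq\Van(G)$ for every linear $\lambda$. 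Uniqueness of $\theta$ therefore forces $\lambda\theta=\theta$, i.e.\ $\theta$ vanishes on $G-G'$ (Lemma~\ref{newchar}).
\item Minimal-counterexample induction on $P\cong G/O_{p'}(G)$, after ruling out that $G$ is a $p$-group or that $P$ is abelian. Together with (ii) and Lemma~\ref{ThmB}, this forces $P$ to be an extraspecial $2$-group with $\theta$ its only non-linear character and $G/G'\cong P/P'$.
\item $G'-O_{2'}(G)\subseteq\Van(G)$, via \cite[Lemma 2.9]{silvio1}.
\item Second orthogonality gives $|C_G(x)|=|P|$ for $x\in\mathbf{Z}(P)$ and $|C_G(y)|=|P:P'|$ for $y\in P-\mathbf{Z}(P)$. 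Hence $P$ acts fixed-point-freely (Lemma~\ref{extraspecial}).
\end{enumerate}
Your Seitz case split on $G/K$, together with ``a Frobenius complement has cyclic or generalized quaternion Sylow $2$-subgroups'', only supplies the final identification $P\cong Q_8$. Finally, abelianness of the kernel needs no vanishing argument: the involution of an even-order Frobenius complement inverts the kernel.
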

  
  Based on Theorem A and Lemma \ref{ThmB}, we have identified all groups for which the common non-zero graphs are null. It makes sense for us to further investigate how a common non-zero graph influences the structure of a group by looking at finite groups that have few edges in their common non-zero graph. The null case represents the most restrictive condition, and the classification of such groups naturally raises the question: what happens when the graph is almost null? In particular, groups whose common non-zero graph has exactly one edge, is triangle-free, or forms a star represent the next layers of complexity. Understanding these cases provides insight into how the vanishing behavior of characters becomes increasingly complex as the graph gains more structure.
  
  We present the following:
  
\begin{ThmB}
	Let $G$ be a finite group whose common non-zero graph has exactly one edge $\{\theta_1, \theta_2\}$.  Then $\theta_1=\zeta\theta_2$ for some $\zeta\in \Irr(G/G')$ and  one of the following occurs:
	
	(1)   $\Van(\chi)=\Van(G)$ for  all $\chi\in \Irr(G|G')-\{\theta_1, \theta_2\}$. In particular $G$ is $p$-nilpotent for some prime $p$. 
	
	 (2)   $G\cong S_4$. 
\end{ThmB}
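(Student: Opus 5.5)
I would first show that the unique edge must be of the form $\{\theta,\zeta\theta\}$. The starting observation is that for any linear $\lambda\in\Irr(G/G')$ and any non-linear $\chi$, we have $\Van(\lambda\chi)=\Van(\chi)$. Hence multiplication by linear characters is an automorphism of $\Gamma_{nv}(G)$: it permutes the non-linear irreducible characters and preserves adjacency.

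Since the graph has exactly one edge $\{\theta_1,\theta_2\}$, this edge must be stabilized as a set by the group $\Irr(G/G')$ acting by multiplication. So for every linear $\lambda$, either $\lambda\theta_1=\theta_1$ or $\lambda\theta_1=\theta_2$. If $\theta_1$ were fixed by every $\lambda$, then $\theta_2$ would be too. In that case both vanish on $G\setminus G'$, and one uses that $\Van(G)$ meets the set where $\theta_1\theta_2\ne0$.

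The cleaner route is to consider the subgraph obtained by adding the edge's orbit. If neither $\lambda$ swaps the two characters, we aim to exhibit a second edge. For a vanishing $g$ with $\theta_1(g)\theta_2(g)\neq0$, the pair $\{\theta_1,\theta_2\}$ and its translates are all edges, and then the character values force an extra adjacency. I would argue this case by choosing $\chi\ne\theta_1,\theta_2$ non-vanishing at $g$, if such exists. Otherwise every $\chi\in\Irr(G|G')$ other than $\theta_1,\theta_2$ vanishes at $g$, and the column orthogonality $\sum_\chi|\chi(g)|^2=|C_G(g)|$ restricts $\theta_1,\theta_2$ enough to derive $\theta_2=\zeta\theta_1$. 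This is where I expect to commit to the concrete choice: pick $\zeta$ with $\zeta(g)\ne1$ if $g\notin G'$, or otherwise use a Galois conjugation argument on $\theta_1$ (Galois action also preserves the graph).

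Next I would reduce to Lemma \ref{thA}. Every $\chi\in\Irr(G|G')\setminus\{\theta_1,\theta_2\}$ is an isolated vertex. For such $\chi$, and any $g\in\Van(G)\setminus\Van(\chi)$, all other non-linear characters must vanish at $g$, including $\theta_1$ and $\theta_2$. The target dichotomy is then the following. Either every such $\chi$ satisfies $\Van(\chi)=\Van(G)$, which is case (1). Or some isolated $\chi$ is non-vanishing at some vanishing $g$, forcing $\theta_1(g)=\theta_2(g)=0$ and all other nonlinear characters to vanish at $g$. In the second situation, the argument of Lemma \ref{thA}(2) applied to a suitable quotient $G/K$ yields that $\Irr(G/K|G'K/K)$ consists of a single character plus the pair. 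The idea is to mimic the proof of Theorem A, as a Frobenius-with-$Q_8$-type configuration, now perturbed by one edge.

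For the second situation I would analyze the group structure via Gallagher's and Clifford's theorems. Groups with very few non-linear characters in $\Irr(G/K|G'K/K)$, namely at most three, are classified: they are small Frobenius or doubly transitive type groups. Scanning these, I expect that only $S_4$ survives, with $\theta_1,\theta_2$ the two degree-3 characters and $\chi$ the degree-2 one, and I would check the character table of $S_4$ directly.

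Finally, for the $p$-nilpotency claim in case (1), I would follow the structure arguments used for strongly $\mathcal{H}_1'$-groups in Lemma \ref{ThmB}. All characters outside the pair share the full vanishing set, so the proof of Lemma \ref{ThmB} should go through on the quotient by the intersection of the kernels of these characters, giving Fitting-height and Frobenius structure, from which a normal $p$-complement follows.

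The main obstacle is the second situation: ruling out all configurations other than $S_4$ requires a genuine classification step, probably via minimal normal subgroups and a nonvanishing-element argument of Isaacs–Navarro–Wolf type (odd-order nilpotent normal subgroups contain no vanishing elements). That is the step I would attempt first in detail.
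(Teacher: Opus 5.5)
\textbf{Verdict: the proposal has a genuine gap.}

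\textbf{Proving $\theta_1=\zeta\theta_2$.} Your observation is correct: multiplication by a linear character is an automorphism of $\Gamma_{nv}(G)$, and so is Galois conjugation. It settles the case where the edge has a witness $g\in\Van(G)-G'$. There, $\zeta$ with $\zeta(g)\neq 1$ gives $\zeta\theta_1\neq\theta_1$, hence $\zeta\theta_1=\theta_2$; this is how the paper finishes (Step 3 of Case 2, via Lemma \ref{newchar}).

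The gap is the complementary case. There every $\lambda\in\Irr(G/G')$ fixes $\theta_1$ and $\theta_2$, so both vanish on $G-G'$ and every witness of the edge lies in $G'$. In this case no $\zeta$ can satisfy $\zeta\theta_1=\theta_2$, so you must show the case is impossible, and your tools do not do that:
\begin{itemize}
\item Column orthogonality at such $g$ gives only $|G:G'|+\theta_1(1)\theta_1(g)+\theta_2(1)\theta_2(g)=0$ and $|C_G(g)|=|G:G'|+|\theta_1(g)|^2+|\theta_2(g)|^2$, with no contradiction.
\item Galois conjugation merely permutes $\{\theta_1,\theta_2\}$.
\end{itemize}
Configurations of exactly this kind really occur as quotients and must be excluded. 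Examples are the two faithful characters of a quotient that is an extraspecial $3$-group, or a Frobenius group of order $p^a(p^a-1)/2$; these are Galois conjugate and fixed by every linear character. Ruling them out is the content of Theorem \ref{key3} (equal kernels) and of Theorem A combined with Theorem \ref{triple} (unequal kernels). This needs real chief-factor analysis: Lemmas \ref{pp}, \ref{pgroup}, \ref{quaterniun} and Zsigmondy's theorem.

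\textbf{The $S_4$ branch.} This is not yet an argument either:
\begin{itemize}
\item Lemma \ref{thA} concerns null graphs. It does not produce a quotient $G/K$ with $\Irr(G/K|G'K/K)$ equal to one character plus the pair.
\item A ``scan'' of groups with three non-linear characters is not a proof.
\item Even with a quotient isomorphic to $S_4$, you must still show its kernel is trivial. The reduction and this lifting occupy Steps 6--10 of Case 2, including a GAP check of order $96$.
\end{itemize}

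\textbf{$p$-nilpotency in case (1).} Running Lemma \ref{ThmB} on a quotient cannot work. $G$ itself is not strongly $\mathcal{H}_1'$, since the edge forces $\Van(\theta_1)\neq\Van(G)$, and $p$-nilpotency of a quotient does not lift to $G$. The missing idea is short (Lemma \ref{pnil}):
\begin{enumerate}
\item By Malle--Navarro--Olsson, $\theta_1$ has a zero $g$ of prime-power order, say a $p$-element.
\item Since $g\in\Van(G)$, hypothesis (1) makes every character in $\Irr(G|G')-\{\theta_1,\theta_2\}$ vanish at $g$. So does $\theta_2=\zeta\theta_1$, because $\Van(\zeta\theta_1)=\Van(\theta_1)$.
\item An irreducible character vanishing at a $p$-element has degree divisible by $p$. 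Hence $p$ divides every non-linear degree.
\item Thompson's theorem then gives a normal $p$-complement.
\end{enumerate}
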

According to this theorem, the only group with a common non-zero graph that contains exactly one edge and three characters, whose vanishing sets do not coincide with the vanishing set of \( G \), is \( S_4 \). All other groups possess only two characters with this property, which are clearly the endpoints of the sole edge in the graph.
 It is noteworthy that there exist examples fulfilling Case 1 of Theorem B, which can be either nilpotent or non-nilpotent. For an example of a nilpotent group that satisfies Case 1, we can consider SmallGroup(32,6), a \( 2 \)-group. In fact all the $2$-groups satisfying Part (c) of  \cite[Lemma 2]{Ber96} are examples of Case 1  of Theorem B . On the other hand, for a non-nilpotent example, we can look at SmallGroup(108, 32) in GAP \cite{gap}, which also meets the requirements specified in Case 1. This group features a normal Sylow \( 3 \)-subgroup that is a direct product of two minimal normal subgroups: one of order \( 3 \) and the other of order \( 9 \). The Sylow \( 2 \)-subgroup of \( G \) acts Frobeniusly on the minimal normal subgroup of order \( 9 \), while its action on the minimal normal subgroup of order \( 3 \) is neither Frobenius nor trivial. Consequently, this case warrants careful consideration.
Based on Lemma \ref{thA} and Theorem B, it is reasonable to pose the following question:

{\bf Question:} Let $G$ be a finite group and $V $ represent the set of all isolated vertices of $\Gamma_{nv}(G)$. Can we conclude that there is at most one $\chi \in V$ for which $\Van(\chi) \neq \Van(G)$?

\smallskip

To demonstrate the solvability of the groups mentioned in Theorem B, we  prove Theorem C, which was established before Theorem B.
\begin{ThmC}
	Let $G$ be a finite non-abelian  group where $\Gamma_{nv}(G)$ is triangle-free. Then $G$ is solvable. 
\end{ThmC}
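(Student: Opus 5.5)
We argue by minimal counterexample. Let $G$ be a non-solvable group of minimal order with $\Gamma_{nv}(G)$ triangle-free. The first step is to show that the hypothesis passes to quotients, at least partially. If $N\trianglelefteq G$, then $\Irr(G/N)\subseteq \Irr(G)$. An element $gN$ that is vanishing for some $\chi\in\Irr(G/N)$ lifts to an element $g\in\Van(G)$ on which the inflated characters take the same values. Hence $\Gamma_{nv}(G/N)$ is an induced subgraph of $\Gamma_{nv}(G)$ and is again triangle-free. By minimality, every proper quotient is solvable. So $G$ has a unique minimal normal subgroup $N$, $G/N$ is solvable, and $N$ is non-solvable, say $N=S^k$ with $S$ non-abelian simple.

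The core step is to produce three non-linear irreducible characters of $G$ and one element $g\in\Van(G)$ on which all three are non-zero; this gives a triangle directly. The natural candidate for $g$ is an element of $N$ that is a zero of some $\chi\in\Irr(G|N)$. For example, we can take $g$ of $p$-power order with $p\nmid$ some degree, using the Malle--Navarro--Olsson result \cite{MNO}, or the fact that $S$ has characters of $p$-defect zero that extend to $\mathrm{Aut}(S)$ so that their tensor products give characters of $G$ vanishing on $p$-singular elements of $N$. For the three characters, we use characters lying over $N$ that do not vanish at $g$. We choose an irreducible $\alpha\in\Irr(S)$ extendible to its inertia group in $\Aut(S)$ (for instance the Steinberg character, which vanishes exactly on $p$-singular elements for the defining characteristic $p$ of a Lie type group, and is non-zero on $p'$-elements). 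We then take $\chi_1\in\Irr(G|\alpha^{\otimes k})$ together with the products of $\chi_1$ with non-linear characters of $G/N$ when these exist. When they do not exist, we take other distinct constituents of $(\alpha^{\otimes k})^G$, or characters over two further suitably chosen $\Aut(S)$-invariant characters $\beta,\gamma$ of $S$ that are non-zero on a common vanishing element. Concretely, it suffices to find an element $x\in S$ and three $\Aut(S)$-stable irreducible characters $\alpha,\beta,\gamma$ of $S$ that are non-zero at $x$, while some other $\Aut(S)$-stable character extending to $G$ vanishes at $x$. Then $g=(x,\dots,x)\in N$ is vanishing in $G$, and the extensions $\hat\alpha,\hat\beta,\hat\gamma$ (which exist by extendibility of invariant tensor characters) restrict to $N$ as $\alpha^{\otimes k}$ and so on, hence are non-zero at $g$. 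This forms a triangle.

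The main obstacle is the verification of this property for every non-abelian simple group $S$, uniformly over the possible automorphism actions. For groups of Lie type in characteristic $p$, we take $x$ a regular unipotent element. It is vanishing via the Steinberg character, and we need three $\Aut(S)$-invariant unipotent characters non-zero at regular unipotents; the trivial character is excluded because it is linear, so we need three non-linear ones. For rank-one groups such as $\mathrm{PSL}_2(q)$, where there are few unipotent characters, we must instead use semisimple elements and explicit character tables. For alternating groups we use hook characters and $p$-cores, namely an $n$-cycle or $p$-cycle, and for sporadic groups we check the Atlas. I expect the small cases, namely $\mathrm{PSL}_2(q)$, $A_5$, $A_6$ and Suzuki groups, where the supply of invariant characters is thin and the outer automorphisms fuse characters, to require the most case-by-case care, possibly with a fallback of choosing characters of $G$ over non-invariant characters by using Clifford theory and induced characters that are non-zero at $g$ because $g^G\cap N$ meets the right components.
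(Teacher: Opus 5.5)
Your reduction is sound. Edges of $\Gamma_{nv}(G/N)$ are edges of $\Gamma_{nv}(G)$; it is a subgraph, though not necessarily an induced one, since $\Van(G)$ can contain $g$ with $gN\notin\Van(G/N)$. So a minimal counterexample has a unique minimal normal subgroup $N\cong S^k$ with $N\le G\le\Aut(N)$, which is the paper's setting $RC/C\le G/C\le\Aut(R/S)$.

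The gap is the core step. You reduce it to a property of all simple groups that you do not verify, and which is false in the ``concrete'' form you state.
\begin{itemize}
\item For $S=SL_2(2^f)$ the Frobenius acts on the torus labels by $i\mapsto 2i$. One checks that the only $\Aut(S)$-invariant non-linear irreducible characters are the Steinberg character and a single character of degree $q+1$ or $q-1$. For $A_5$, the only $S_5$-invariant non-linear characters have degrees $4$ and $5$. So for $G=\Aut(S)$, e.g.\ $G=S_5$, three invariant non-linear characters do not exist, let alone a fourth invariant one vanishing at $x$.
\item Your plan for groups of Lie type fails in every rank. By Green--Lehrer--Lusztig, the characters non-zero on regular unipotent elements are the semisimple ones, and $1_S$ is the only unipotent one among them. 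For $GL_n(q)$, all Green functions equal $1$ at a regular unipotent $u$, so $R_\lambda(u)=[\chi^\lambda,1_{S_n}]$. In $GL_3(2)\cong\mathrm{PSL}_2(7)$, the unipotent character of degree $6$ vanishes on elements of order $4$.
\item $p$-defect zero characters of $S$ need not be $\Aut(S)$-invariant: for $A_5$ and $p=3$ they are the two characters of degree $3$.
\end{itemize}
The fallbacks you mention are never turned into an argument.

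What is missing are the ideas that let the paper work with a single extendible character.
\begin{itemize}
\item The vanishing element needs no invariant witness, because $\Van(N)\subseteq\Van(G)$ by Lemma~\ref{faithful}. With this, your Gallagher idea already settles $|G/N|\ge 3$. Take $\alpha$ non-linear and extendible to $\Aut(S)$, and $x\in\Van(S)$ with $\alpha(x)\ne0$; such $x$ exists because $S$ is not an $\mathcal{H}_1'$-group, as in the proof of Lemma~\ref{taze}. Then the characters $\chi_1\beta$, $\beta\in\Irr(G/N)$, are distinct, non-linear and non-zero at $(x,\dots,x)\in\Van(G)$. Here linear $\beta$ must be allowed, not only non-linear ones as you wrote.
\item The hard cases are $|G/N|\le 2$, and there the paper uses an argument absent from your proposal. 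Unless $S$ is an exception in Lemma~\ref{ono}, $S$ has $p$-defect zero characters for every $p$, so $N-\{1\}\subseteq\Van(G)$. Let $\chi,\chi\lambda$ be two characters over $\gamma^k$ (this needs $|G/N|>1$), and let $\theta$ lie over $\eta_0^k$ with $\eta_0\ne\gamma$. Triangle-freeness forces $\theta\overline{\chi}$ to vanish on $N-\{1\}$. Hence $[\theta_N,\chi_N]=\theta(1)\chi(1)/|N|\ne0$, contradicting that $\theta_N$ and $\chi_N$ share no constituent.
\item The case $G=N$ is covered by the completeness of $\Gamma_{nv}(S)$ for simple $S$.
\item Only for the exceptional groups of Lemma~\ref{ono} does the paper use an argument of your type. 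It exhibits three extendible characters forming a triangle in $\Gamma_{nv}(S)$ (from the Atlas, or hook characters for $A_n$) and applies Lemma~\ref{taze}.
\end{itemize}
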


We were unable to identify any solvable groups where the common non-zero graph is triangle-free and contains more than one edge. Therefore, we believe that the following conjecture may hold true:

{\bf Conjecture 1. } Let $G$ be a finite non-abelian group with triangle-free common non-zero graph. Then $\Gamma_{nv}(G)$ has at most one edge.

 To provide evidence for Conjecture 1, we analyzed groups characterized by a common non-zero graph displaying a star shape, and we present the following finding:
\begin{ThmD}
Let $G$ be a finite group and $\Gamma_{nv}(G) $ be  an star. Then $G$ has exactly one non-linear irreducible character.  
\end{ThmD}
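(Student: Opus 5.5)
The plan is to show that a star $\Gamma_{nv}(G)$ with at least one edge cannot occur, so that the star is a single vertex. Throughout, ``$\chi \sim \psi$'' means $\chi$ and $\psi$ are adjacent in $\Gamma_{nv}(G)$. Two elementary observations do most of the work.

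\emph{First observation.} For $\lambda\in\Irr(G/G')$ we have $\Van(\lambda\chi)=\Van(\chi)$ and $\Nv(\lambda\chi)=\Nv(\chi)$. Hence multiplication by $\lambda$ is an automorphism of $\Gamma_{nv}(G)$.

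\emph{Second observation.} An edge $\chi\sim\psi$ is witnessed by some $g\in\Van(G)$ with $\chi(g)\psi(g)\neq 0$. Since $g$ is vanishing and linear characters never vanish, some third non-linear character $\varphi\notin\{\chi,\psi\}$ satisfies $\varphi(g)=0$.

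\emph{Step 1: the case of exactly one edge.} Suppose the star is $K_{1,1}$, with edge $\{\theta_1,\theta_2\}$. By Theorem B we have $\theta_1=\zeta\theta_2$. Since $G$ then has only two non-linear characters, the second observation needs a third non-linear character, which does not exist. Hence there is no edge, a contradiction. (Theorem B is not even strictly needed for this step.)

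\emph{Step 2: at least two edges.} Suppose the star is $K_{1,n}$ with $n\ge 2$, centre $c$ and leaves $\psi_1,\dots,\psi_n$. Since $n\ge 2$, the centre is the unique vertex of degree greater than one, so it is fixed by every graph automorphism. By the first observation, $\lambda c=c$ for all $\lambda\in\Irr(G/G')$. Standard arguments then give that $c$ vanishes on $G\setminus G'$. Since the leaves are pairwise non-adjacent, for each $g\in\Van(G)$ at most one leaf is non-zero at $g$. If moreover $c(g)\neq 0$, then $g\in G'$, and by the second observation exactly one leaf $\psi$ has $\psi(g)\ne0$.

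For such $g$ the column orthogonality relations reduce to two-term identities:
\[
|\mathbf C_G(g)|=|G:G'|+|c(g)|^2+|\psi(g)|^2,\qquad c(1)c(g)+\psi(1)\psi(g)=-|G:G'|.
\]
The second identity uses $\sum_{\lambda}\lambda(g)=|G:G'|$ for $g\in G'$. I would combine these with solvability, which holds by Theorem C since a star is triangle-free.

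\emph{Step 3: induction on quotients.} For $N\trianglelefteq G$, every vanishing element of $G/N$ lifts to a vanishing element of $G$. Therefore $\Gamma_{nv}(G/N)$ is a subgraph of $\Gamma_{nv}(G)$, so it is a star, a null graph, or has one edge. The null and one-edge cases are described by Theorem A and Theorem B. I would take $N$ a minimal normal subgroup (an elementary abelian $p$-group, by solvability) and compare the character degrees of $c$ and the $\psi_i$ over $N$, using Clifford theory. The aim is a contradiction with the two orthogonality identities, for instance by producing $g\in N\cap\Van(G)$ at which two leaves are both non-zero.

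\emph{Main obstacle.} I expect Step 3, ruling out $n\ge 2$, to be the real difficulty. The first thing I would try is to show that all leaves lie in a single $\Irr(G/G')$-orbit; the transitivity of the automorphism group on leaves is suggested by the star shape. Then, using Theorem A applied to $G/N$, I would force $G$ to be a Frobenius group with complement $Q_8$, or a strongly $\mathcal H_1'$-group, and check these families directly.
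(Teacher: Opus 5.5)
Your Steps 1 and 2 are sound. Step 1 correctly excludes $K_{1,1}$: a witness of the edge must be a zero of some third non-linear character. In Step 2 the centre $c$ is indeed fixed by every $\lambda\in\Irr(G/G')$, so it vanishes on $G-G'$. (One slip: for an arbitrary $g\in\Van(G)$ with $c(g)\neq 0$ you only get \emph{at most} one non-zero leaf. ``Exactly one'' holds only for elements witnessing an edge, and it does not follow from your second observation.)

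The genuine gap is that the case $n\ge 2$, which is the entire content of Theorem D, is never argued. Step 3 and your final paragraph are a list of intentions, and they do not point to a workable route:
\begin{itemize}
\item Transitivity of $\Irr(G/G')$ on the leaves is not supported by anything. The full automorphism group of $K_{1,n}$ is transitive on leaves, but that says nothing about the subgroup induced by multiplication with linear characters.
\item The proposed induction has no hypothesis to use. A subgraph of a star can be a star plus isolated vertices, which is covered neither by Theorem A, nor by Theorem B, nor by the statement you are proving.
\item Even when $\Gamma_{nv}(G/N)$ is null, Theorem A describes only $G/N$. It gives no control over the leaves lying in $\Irr(G|N)$, so nothing in the plan produces a vanishing element at which two leaves are simultaneously non-zero.
\end{itemize}

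The missing idea is the paper's reduction to a quotient with a unique minimal normal subgroup. Let $M\trianglelefteq G$ be maximal with $G/M$ non-abelian. Then $G'M/M$ is the unique minimal normal subgroup of $G/M$, and $\Van(\eta)=G-G'M$ for every $\eta\in\Irr(G/M|G'M/M)$. There are two cases.
\begin{itemize}
\item At least two such $\eta$. They are non-zero on all of $G'M$, so any non-linear $\chi\in\Irr(G|M)$ that is non-zero at some point of $\Van(G)\cap G'M$ would give a triangle. Hence no edge joins $\Irr(G|M)$ to $\Irr(G/M|G'M/M)$. Connectivity of the star then forces $M=1$, and all vertices have vanishing set $G-G'$ and are isolated.
\item Exactly one such $\eta$. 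The paper applies Theorem~\ref{triple}: a non-isolated $\eta$ forces a triangle, two disjoint edges, or exactly one edge, and none of these fits $K_{1,n}$ with $n\ge 2$.
\end{itemize}
Theorem~\ref{triple} is a substantial chief-factor analysis below $M$. It uses the classification of groups with two non-linear irreducible characters, Lemma~\ref{quaterniun} and Lemma~\ref{pp}, and your proposal contains nothing that replaces it.

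Your Step 2 can in fact be sharpened. Each $\lambda$ fixes $c$, so it permutes the leaves. A leaf $\psi$ with $\psi(g)\neq 0$ for some $g\in G-G'$ would then be adjacent to the different leaf $\lambda\psi$ for any $\lambda$ with $\lambda(g)\neq 1$. Hence every non-linear character vanishes on $G-G'$. Combined with the reduction above, this forces $\eta=c$. Column orthogonality in $G/M$ then places every edge-witness in $\Van(G)\cap M$. But deriving the final contradiction from there is precisely the work done in Theorem~\ref{triple}.
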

%For natural number $n$ and a set of primes $\pi$  we denote by $n_{\pi}$, the $\pi$-part of $n$ and $n_{\pi'}$  the $\pi'$-part of $n$.  Let $q$ be a prime, by $q$-singular element,   we mean an element whose order is divided by $q$.    Let $G$ be a group and $N$ be a normal subgroup of $G$.  If $g\in G$, we denote by $g^G$ a conjugacy class of $G$ containing $g$. 
%%%%%%%%%%%%%%%%%%
  If  $\chi\in \Irr(G)$ and  $k$ is an integer we denote by  $G^k$, the direct product of $k$ copies of $G$  and by   $\chi^k$  we mean  the product of $k$ copies of $\chi$, which is  $\chi\times \dots\times \chi\in \Irr(G^k)$.  We denote by $\Irr(G|N)$ the set of irreducible characters whose kernels does not contain $N$.  Also if $\theta\in \Irr(N)$ by  $\Irr(G|\theta)$ we mean the set of irreducible characters of $G$ above $\theta$. Let \( q \) be a prime number and \( n \) be an integer. We define \( n_q \) as the \( q \)-part of \( n \), which represents the largest power of \( q \) that divides \( n \). Similarly, \( n_{q'} \) denotes the \( q' \)-part of \( n \), calculated as \( n/n_q \). By a star graph of order $n$, sometimes simply known as an $n$-star, we mean a tree on $n$ vertices with one vertex having  degree $n-1$ and the other $n-1$ having  degree 1. By this definition  a  $1$-star is a graph with exactly one vertex.   
  All the other notations are standard and we  refer to \cite{Isaacs}, for instance.

\section{Preliminaries}

Before we delve into the proofs,  first address some preliminary lemmas that will be useful in our forthcoming discussions.

\begin{lemma}
	(see \cite[Corollary 2.2]{ono})\label{ono} Let $G$ be a non-abelian finite simple group and $p$ be a prime.  Then there exists $\chi\in \Irr(G)$ of $p$-defect zero, unless one of the following occurs:
	
	1) $p=2$ and $G\in \{  M_{12}, M_{22}, M_{24}, J_{2}, HS, Suz,  Ru, C_{1}, C_{3},BM, A_n \}$, where $n\not = 2m^2+m+2$ nor $ 2m^2+m$ for some integer $m$.
	
	2) $p=3$ and $G\in \{ Suz, C_3, A_n\}$, with $3n+1=m^2r$, where  $r$ is an square-free integer, divisible by some prime  $q\equiv 2\pmod 3$.
\end{lemma}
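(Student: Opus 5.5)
The plan is to assemble this from the classification of finite simple groups, treating each family separately. The only tool needed is the standard criterion that $\chi$ has $p$-defect zero exactly when $\chi(1)_p=|G|_p$. The cases are groups of Lie type, alternating groups, and sporadic groups; the hard part will be the alternating groups.

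For a simple group of Lie type in defining characteristic $p$, I take the Steinberg character $\mathrm{St}$. Its degree is $|G|_p$, so it has $p$-defect zero. For Lie type groups in cross characteristic $p\neq r$, where $r$ is the defining characteristic, I invoke the theorem of Michler (completed by Willems) that such groups always possess a $p$-block of defect zero. Some small groups of Lie type are isomorphic to alternating groups, such as $\mathrm{PSL}_2(4)\cong\mathrm{PSL}_2(5)\cong A_5$, $\mathrm{PSL}_2(9)\cong A_6$ and $\mathrm{PSL}_4(2)\cong A_8$. These are consistent with the alternating case, because the Lie-type argument already produces a defect-zero character and the $n$ in question satisfies the stated exceptions accordingly. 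For the $26$ sporadic groups, I read off from the ATLAS, for each prime $p$ dividing $|G|$, whether some degree has full $p$-part. This yields exactly the listed exceptions: $p=2$ for $M_{12},M_{22},M_{24},J_2,HS,Suz,Ru,Co_1,Co_3,BM$, and $p=3$ for $Suz,Co_3$.

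For $A_n$ I use the theory of $S_n$. By the Nakayama conjecture, $\chi^\lambda\in\Irr(S_n)$ has $p$-defect zero iff $\lambda$ is a $p$-core. For odd $p$, passage to $A_n$ preserves defect zero: if $\lambda\neq\lambda'$ the restriction is irreducible of the same degree, and if $\lambda=\lambda'$ each constituent has degree $\chi^\lambda(1)/2$ with the same $p$-part. So for odd $p$, $A_n$ has a $p$-defect zero character iff $n$ has a $p$-core partition. By Granville–Ono, $t$-cores of every $n$ exist for all $t\geq4$, which settles every prime $p\geq5$. For $p=3$, counting $3$-cores via their generating function (equivalently, representations by the quadratic form $x^2+xy+y^2$) gives a $3$-core of $n$ iff $3n+1$ has no prime factor $q\equiv2\pmod3$ to an odd power. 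This is exactly the negation of the stated condition $3n+1=m^2r$ with $r$ divisible by some $q\equiv 2\pmod 3$.

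For $p=2$ the $2$-cores are the staircases, with sizes $k(k+1)/2$. Now $|A_n|_2=|S_n|_2/2$, so a character of $A_n$ has $2$-defect zero iff it is a constituent of some $\chi^\lambda$ with $\chi^\lambda(1)_2\in\{|S_n|_2,\ |S_n|_2/2\}$, where in the second case $\lambda$ must be self-conjugate so that the restriction splits. The first case means $\lambda$ is a staircase. In the second, $\lambda$ has $2$-weight $1$ and is self-conjugate, so its $2$-core is a staircase and $n$ is again triangular. The condition is therefore that $n=k(k+1)/2$. Writing $k=2m$ or $k=2m+1$ gives $n=2m^2+m$ or $n=2m^2+3m+1$, and one should check that this matches the stated forms. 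I expect this bookkeeping to be the main obstacle: deciding precisely which self-conjugate weight-one partitions occur, and reconciling the resulting arithmetic conditions with the exception list exactly as stated (including the $2m^2+m+2$ form). I would first verify the small cases $A_5,\dots,A_{10}$ directly against character tables to calibrate the criterion.
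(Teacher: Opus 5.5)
The paper gives no proof of this lemma; it quotes \cite[Corollary 2.2]{ono}. Your overall assembly is the route taken in that source: the Steinberg character in defining characteristic, Michler--Willems in cross characteristic, the ATLAS for the sporadic groups, and core partitions for $A_n$, with the existence of $t$-cores for $t\geq 4$ as the key input. Your treatment of $A_n$ for odd $p$ is also fine. The genuine gap is the case $p=2$ for $A_n$. There your criterion is wrong, and the conclusion ``$n$ triangular'' contradicts the statement you are trying to prove.

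\textbf{Where the $p=2$ case goes wrong.} An irreducible character $\psi$ of $A_n$ lying under $\chi^\lambda$ has $\psi(1)=\chi^\lambda(1)$ if $\lambda\neq\lambda'$, and $\psi(1)=\chi^\lambda(1)/2$ if $\lambda=\lambda'$. Since $|A_n|_2=|S_n|_2/2$, if $\chi^\lambda(1)_2=|S_n|_2/2$ and $\lambda=\lambda'$, the constituents have $2$-part $|A_n|_2/2$. So they are \emph{not} of defect zero: you have the splitting condition backwards. The correct second case is $\lambda\neq\lambda'$ with $\chi^\lambda(1)_2=|S_n|_2/2$; then the restriction is irreducible and of defect zero. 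Also, a partition of $2$-weight $1$ has $2$-core of size $n-2$, so this case yields $n-2$ triangular, not $n$.

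\textbf{How to close the argument.} You need two facts.
\begin{enumerate}[(i)]
\item By the hook formula, $\nu_2\bigl(\prod h\bigr)=w+\sum \nu_2(h')$, where $w$ is the $2$-weight and $h'$ runs over the hook lengths of the $2$-quotient. Hence $\chi^\lambda(1)_2=|S_n|_2/2$ if and only if $w=1$.
\item The two weight-one partitions with core $(k,k-1,\dots,1)$ are $(k+2,k-1,\dots,1)$ and its conjugate. Conjugation interchanges them, so neither is self-conjugate.
\end{enumerate}
Combine these with the staircase case: staircases are self-conjugate, so $\chi^\lambda$ splits into two characters whose $2$-part is exactly $|A_n|_2$. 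This gives that $A_n$ has a $2$-block of defect zero if and only if $n$ or $n-2$ is triangular. Equivalently, $n=2m^2+m$ or $n=2m^2+m+2$ with $m\in\mathbb{Z}$; negative $m$ is needed to capture $k(k+1)/2$ with $k$ odd.

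\textbf{A concrete check that fails.} Your criterion breaks already for $A_5\cong\mathrm{PSL}_2(4)$ and $A_8\cong\mathrm{PSL}_4(2)$. Their Steinberg characters, of degrees $4$ and $64$, have full $2$-part, yet $5$ and $8$ are not triangular. These are the $m=1$ and $m=-2$ instances of the $2m^2+m+2$ family. So the consistency check with the Lie-type isomorphisms that you asserted would not go through as written.
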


%\begin{lemma}(see \cite[Lemma 2.2]{brough})\label{brough}
%	Let $G$ be a finite group, $N$ a normal subgroup of $G$ and $p$ a prime. If $N$ has an
%	irreducible character of $p$-defect zero, then every $p$-singular element of $N$  is a vanishing element
%	in $G$.
%\end{lemma}

\begin{lemma}(see \cite[Lemma 5]{ema})\label{multi}
	Let $G$ be a finite group, and $N = S_1\times\dots\times S_k$ a minimal normal subgroup of
	$G$, where every $S_i$ is isomorphic to a non-abelian simple group $S$. If $\theta\in  \Irr(S)$ extends to $\Aut(S)$, then
	$\phi=\theta\times\dots\times\theta\in \Irr(N)$ extends to $G$.
\end{lemma}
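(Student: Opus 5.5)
The statement to prove is the cited Lemma: if $N=S_1\times\dots\times S_k$ is a minimal normal subgroup of $G$ with each $S_i\cong S$, and $\theta\in\Irr(S)$ extends to $\Aut(S)$, then $\phi=\theta\times\dots\times\theta$ extends to $G$. I would reduce to a wreath product. Since $\mathbf{C}_G(N)$ acts trivially on $N$, the conjugation action gives a homomorphism $G\to \Aut(N)$ with kernel $\mathbf{C}_G(N)$. Because $S$ is non-abelian simple, $\Aut(N)\cong \Aut(S)\wr S_k$. The image of $N$ is $\mathrm{Inn}(N)\cong N$, and $N\cap\mathbf{C}_G(N)=\mathbf{Z}(N)=1$. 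So I would first construct an extension of $\phi$ to the group $\Gamma=\Aut(S)\wr S_k$, viewing $N$ as $\mathrm{Inn}(S)^k\le \Aut(S)^k$. Then I would pull it back to $G$.

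For the construction in $\Gamma$, let $\hat\theta\in\Irr(\Aut(S))$ extend $\theta$, with representation $\rho$ of degree $d$ on a space $V$. On $V^{\otimes k}$, let $(a_1,\dots,a_k)\in\Aut(S)^k$ act by $\rho(a_1)\otimes\dots\otimes\rho(a_k)$, and let $\sigma\in S_k$ permute the tensor factors. One checks that these formulas are compatible with the wreath product relation $\sigma(a_1,\dots,a_k)\sigma^{-1}=(a_{\sigma^{-1}(1)},\dots,a_{\sigma^{-1}(k)})$. This is the standard tensor-induction construction, so it gives a representation of $\Gamma$ whose restriction to $\Aut(S)^k$ is $\hat\theta^{\,k}$, and hence whose restriction to $N$ is $\phi$. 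Its character $\Psi$ is therefore irreducible, since its restriction is.

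Next I would pull back. Let $\pi:G\to\Gamma$ be the composite homomorphism. On $N$, the map $\pi$ is the isomorphism $N\to\mathrm{Inn}(N)$, which sends $S_i$ to the $i$-th factor $\mathrm{Inn}(S)$. This identification needs care: the $S_i$ must be labelled consistently, and the identification $\Aut(N)\cong\Aut(S)\wr S_k$ must be made using fixed isomorphisms $S_i\cong S$. Then $\chi=\Psi\circ\pi$ is a character of $G$, and $\chi_N=\phi$ under these identifications. Since $\phi$ is irreducible, $\chi$ is irreducible, and it extends $\phi$.

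I expect the main obstacle to be the bookkeeping. First, one must verify that $\Aut(N)$ really is the wreath product, which uses the fact that automorphisms permute the simple factors $S_i$ since they are the unique minimal normal subgroups. Second, one must check that the tensor-induced representation is a genuine homomorphism and not only a projective one. The second point holds because the permutation action on tensors is exactly linear. Minimality of $N$ in $G$ is not actually needed beyond guaranteeing the product structure.
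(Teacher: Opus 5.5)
Your argument is correct and is the standard proof of the result the paper quotes from \cite[Lemma 5]{ema}; the paper gives no proof of its own. The route is the same: pass to $G/\mathbf{C}_G(N)\hookrightarrow \Aut(N)\cong\Aut(S)\wr S_k$, extend $\hat\theta\times\cdots\times\hat\theta$ to the wreath product by tensor induction, and inflate back to $G$. One point your bookkeeping should state explicitly is that $\phi$ does not depend on the chosen isomorphisms $S\cong S_i$, because $\theta$ extends to $\Aut(S)$ and is therefore $\Aut(S)$-invariant.
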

\begin{lemma}(see \cite[Theorem 4.8]{ourself})\label{faithful}
	Let \( N \subseteq G \subseteq \Aut(N) \) be a minimal normal subgroup of $G$, where \( N \cong N_1 \times \dots \times N_r \cong S^r \)  and \( S \) is a finite non-abelian simple group. Then \( \Van(N) \subseteq \Van(G) \).  
	%\in \{M_{12}, M_{22}, M_{24}, Suz, Ru, HS, J_2, C_1, C_3, BM, A_n\} \) with \( n \geq 5 \).
\end{lemma}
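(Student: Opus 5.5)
The plan is to use Clifford theory, reducing the question to one about characters of $S$, and to rely on defect-zero characters wherever Lemma \ref{ono} supplies them. Let $x=(x_1,\dots,x_r)\in \Van(N)$. Every $\varphi\in\Irr(N)$ has the form $\varphi=\theta_1\times\dots\times\theta_r$ with $\theta_i\in\Irr(S)$, and $\varphi(x)=\prod_i\theta_i(x_i)$. Hence $x$ vanishes in $N$ exactly when some coordinate, say $x_1$, lies in $\Van(S)$. In particular $x_1\neq 1$.

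The key observation concerns a $G$-conjugate-closed family $\mathcal F\subseteq \Irr(N)$ all of whose members vanish at $x$. Take $\varphi\in\mathcal F$ and any $\chi\in\Irr(G|\varphi)$. By Clifford's theorem $\chi_N=e\sum_j\varphi^{g_j}$, and every $\varphi^{g_j}$ lies in $\mathcal F$. Therefore $\chi(x)=0$ and $x\in\Van(G)$.

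\textbf{Step 1: $x_1$ has order divisible by a prime $p$ for which $S$ has a character of $p$-defect zero.}
\begin{itemize}
\item Let $\theta\in\Irr(S)$ have $p$-defect zero. Then $\theta$ vanishes on all $p$-singular elements of $S$.
\item Every $\Aut(S)$-conjugate of $\theta$ also has $p$-defect zero.
\item Put $\varphi=\theta\times\dots\times\theta$. Since $G\subseteq\Aut(N)=\Aut(S)\wr S_r$, each $G$-conjugate of $\varphi$ is a product $\theta^{a_1}\times\dots\times\theta^{a_r}$ of $p$-defect zero characters.
\item Each such conjugate therefore vanishes at every element of $N$ having at least one $p$-singular coordinate, and in particular at $x$.
\end{itemize}
The observation above then gives $x\in\Van(G)$. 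By Lemma \ref{ono}, a $p$-defect zero character exists for every prime $p\geq 5$, and also for $p\in\{2,3\}$ except in the listed groups. This settles every case except the following one.

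\textbf{Step 2: the residual case.} Here $S$ is one of the exceptional groups of Lemma \ref{ono}, and every coordinate $x_i$ that vanishes in $S$ is a $\{2,3\}$-element, with only exceptional primes dividing its order. For this case I would use Lemma \ref{multi} instead.
\begin{itemize}
\item It suffices to find $\theta\in\Irr(S)$ that extends to $\Aut(S)$ and vanishes at $x_1$.
\item Then $\theta^r$ extends to some $\chi\in\Irr(G)$.
\item This gives $\chi(x)=\theta^r(x)=\prod_i\theta(x_i)=0$.
\end{itemize}
For the sporadic groups in the list ($M_{12},M_{22},M_{24},J_2,HS,Suz,Ru,Co_1,Co_3,BM$), I would check this class by class against the Atlas. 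Here $\Aut(S)$ is at most $S.2$. For $S.2$, a character fails to extend only when it is swapped with another by the outer automorphism, and that happens in a controlled way. If no single extendible $\theta$ works for some class, I would instead take a pair $\{\theta,\theta'\}$ swapped by the outer automorphism, both vanishing at $x_1$. The family $\{\theta,\theta'\}^{\times r}$ is then $G$-stable, and the observation applies.

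\textbf{Step 3 and main obstacle: the alternating groups.} I expect the case $S=A_n$ at $p=2,3$ to be the main difficulty. The plan is to work with $\Aut(A_n)=S_n$ for $n\neq 6$, treating $A_6$ by hand. Let $x_1$ be a vanishing $\{2,3\}$-element of $A_n$. I would look for a non-self-conjugate partition $\lambda$ of $n$ for which $\chi^\lambda|_{A_n}$ is irreducible, hence $S_n$-invariant and extendible, and vanishes at $x_1$. The Murnaghan--Nakayama rule makes this feasible: I would choose $\lambda$ with no rim hook of the length of some cycle of $x_1$, or a hook-type partition adapted to the cycle type of $x_1$. The delicate sub-case is when $x_1$ lies in a class that splits in $A_n$. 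There I would use the pair of constituents of a self-conjugate $\chi^\lambda$ as a $G$-stable family, exactly as in Step 2. I expect the combinatorics of these two constructions to be the hardest part of the argument.
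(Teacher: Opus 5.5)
The paper does not prove this lemma. It imports it from \cite[Theorem 4.8]{ourself}, so the only indication of the intended argument is the surrounding toolkit: Lemma \ref{ono}, Lemma \ref{multi}, and the defect-zero principle used in the proof of Theorem C. Your outline is a correct proof strategy along those lines, apart from finite character-table checks that you have not yet carried out. Three pieces of it are sound:
\begin{itemize}
\item the Clifford observation about $G$-stable subfamilies of $\Irr(N)$;
\item Step 1, using $p$-defect-zero characters of $S$, whose $\Aut(S)\wr S_r$-conjugates again have $p$-defect zero;
\item the reduction of the remaining work to a statement about characters of the exceptional groups $S$.
\end{itemize}

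\textbf{The residual case.} It is lighter than you expect. By your own observation, you only need some $\theta\in\Irr(S)$ that vanishes on the whole $\Aut(S)$-class of $x_1$. Then every $\theta^{a_1}\times\cdots\times\theta^{a_r}$ vanishes at $x$ through its first coordinate, so neither extendibility nor Lemma \ref{multi} is needed.

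\textbf{Sporadic groups.} Suppose every $\Aut(S)$-image of $x_1$ is $S$-conjugate to a power $x_1^k$ with $(k,o(x_1))=1$. Then any $\theta$ with $\theta(x_1)=0$ works, because each $\theta^a(x_1)$ is a Galois conjugate of $\theta(x_1)$. So for the listed sporadic groups, all with $|\Aut(S):S|\le 2$, only classes that the outer automorphism fuses with a class not algebraically conjugate to them need an Atlas check. For example, in $M_{12}$ the fused pairs $4A/4B$ and $8A/8B$ are handled by the unique character of degree $144$. Being unique of its degree, it is $\Aut(M_{12})$-invariant, and it vanishes on all four classes. The remaining sporadic groups in the list with an outer automorphism still need this check.

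\textbf{Alternating groups.} For $A_n$ with $n\neq 6$, no Murnaghan--Nakayama search is needed. Take any $\psi\in\Irr(A_n)$ with $\psi(x_1)=0$; there are two cases.
\begin{itemize}
\item If $\psi=\chi^\lambda|_{A_n}$ with $\lambda\neq\lambda'$, then $\psi$ is $S_n$-invariant and you are done.
\item If $\psi$ is a constituent of $\chi^\lambda|_{A_n}$ with $\lambda=\lambda'$, let $(h_1,\dots,h_d)$ be the diagonal hook lengths of $\lambda$. On the classes of that cycle type the two constituents take the values $\frac12(\epsilon\pm\sqrt{\epsilon h_1\cdots h_d})$ with $\epsilon=(-1)^{(n-d)/2}$, and these are non-zero. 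So $x_1$ does not have that cycle type, and both constituents take the value $\chi^\lambda(x_1)/2=0$ at $x_1$.
\end{itemize}
Hence the $S_n$-orbit of any character vanishing at $x_1$ also vanishes at $x_1$, whether or not the class splits, and Step 3 closes at once. Finally, $A_6$ never reaches the residual case. Since $|A_6|=2^3\cdot 3^2\cdot 5$, its characters of degrees $8$ and $9$ have $2$-defect zero and $3$-defect zero respectively.
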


The following results are taken from \cite[Theorems 1-4]{ema}. 
\begin{lemma}\label{extend}
	Let $S$ be a finite non-abelian simple group. Then 
	there is  a non-linear irreducible character which is extendible to	 
	$\Aut(S).$	  
\end{lemma}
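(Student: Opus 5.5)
The plan is to go through the classification of finite simple groups. Throughout, extendibility will be obtained from two standard facts. First, if $\theta\in\Irr(S)$ is $\Aut(S)$-invariant and $\mathrm{Out}(S)$ is cyclic, then $\theta$ extends to $\Aut(S)$ (Isaacs, Corollary 11.22). Second, more generally, an invariant character extends as soon as it extends to the preimage of every Sylow subgroup of $\mathrm{Out}(S)$ (Isaacs, Corollary 11.31). So in each family the task is to find a non-linear irreducible character that is $\Aut(S)$-invariant, and, where $\mathrm{Out}(S)$ is not cyclic, to check the Sylow-wise extension or quote an explicit construction.

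\emph{Alternating groups.} Let $S=A_n$ with $n\geq 5$ and $n\neq 6$. Then $\Aut(S)=S_n$. Take the character $\chi_{(n-1,1)}$ of $S_n$ of degree $n-1$. Its partition is not self-conjugate, so its restriction to $A_n$ is irreducible and non-linear. This restriction therefore extends to $S_n=\Aut(S)$ by construction.

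For $n=6$ we have $\mathrm{Out}(A_6)\cong C_2\times C_2$. Here I will read off from the ATLAS that the degree $10$ character of $A_6$ (or the degree $9$ character) is $\Aut(A_6)$-invariant and extends to $\Aut(A_6)$. Equivalently, it suffices to check that it extends to each of $S_6$, $\mathrm{PGL}_2(9)$ and $M_{10}$, and then apply the Sylow criterion.

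\emph{Sporadic groups and the Tits group.} In these cases $|\mathrm{Out}(S)|\leq 2$. Any rational character of $S$ whose degree occurs only once is automatically $\Aut(S)$-invariant, and it then extends by the cyclic criterion. The ATLAS supplies such a character in every case; for instance, for most of these groups the smallest non-trivial degree occurs only once.

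\emph{Groups of Lie type.} Take the Steinberg character $\mathrm{St}$, which is non-linear. It is $\Aut(S)$-invariant, since it is the unique irreducible character of its degree $|S|_p$ (for Lie type in characteristic $p$). By Feit's theorem (building on Schmid), $\mathrm{St}$ extends to $\Aut(S)$. If I had to argue this directly rather than quote it, I would proceed as follows.
\begin{itemize}
\item Realize $\mathrm{St}$ through the alternating sum over parabolic subgroups containing a fixed Borel subgroup $B$ (the Curtis--Alvis duality description).
\item Note that this formula is stable under the normalizer in $\Aut(S)$ of $B$ and of a maximal split torus.
\item Since $\Aut(S)=S\cdot N_{\Aut(S)}(B)$, this gives the extension. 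Equivalently, one can use the $p$-defect zero property and Sylow-wise extension.
\end{itemize}
Small cases that coincide with alternating groups, such as $\mathrm{PSL}_2(4)\cong\mathrm{PSL}_2(5)\cong A_5$ and $\mathrm{PSL}_2(9)\cong A_6$, are covered by either argument.

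The main obstacle is the Lie-type case when $\mathrm{Out}(S)$ is non-cyclic, that is, when diagonal, field and graph automorphisms all occur (e.g.\ $D_4(q)$, $\mathrm{PSL}_n(q)$). There invariance alone does not give extendibility, and I would rely on Feit's extension theorem for the Steinberg character rather than on a case-by-case check. Apart from that, the proof is bookkeeping across the families, combined with ATLAS verification for the finitely many exceptional groups.
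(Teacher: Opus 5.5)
Your route matches the source the paper cites for this lemma (\cite[Theorems 1--4]{ema}): the Steinberg character with Feit's extension theorem for groups of Lie type, the restriction of $\chi^{(n-1,1)}$ for $A_n$, and an ATLAS check for the sporadic groups and the Tits group. There is, however, a genuine error in your $A_6$ paragraph. The degree $10$ character $\theta$ of $A_6$ is $\Aut(A_6)$-invariant, but it does \emph{not} extend to $\Aut(A_6)\cong\Aut(S_6)$.

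\begin{itemize}
\item Its two extensions to $S_6$ are $\chi^{(4,1,1)}$ and $\chi^{(3,1^3)}$. On transpositions they take the values $2$ and $-2$, and on products of three disjoint transpositions the values $-2$ and $2$.
\item The exceptional outer automorphism of $S_6$ interchanges these two classes, so it interchanges the two extensions.
\item Any extension of $\theta$ to $\Aut(S_6)$ would restrict to an $\Aut(S_6)$-invariant extension of $\theta$ to the normal subgroup $S_6$, and no such extension exists.
\end{itemize}

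Your stated justification also fails. Since $\mathrm{Out}(A_6)\cong C_2\times C_2$ is itself a $2$-group, Isaacs' Corollary~11.31 reduces nothing. Extendibility to the three intermediate subgroups $S_6$, $\mathrm{PGL}_2(9)$ and $M_{10}$ does not imply extendibility to $\Aut(A_6)$. In fact $\theta$ is a counterexample: being invariant with cyclic quotient, it extends to each of these three groups, but not to $\Aut(A_6)$.

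The repair is your parenthetical alternative, the degree $9$ character. It is the Steinberg character of $\mathrm{PSL}_2(9)\cong A_6$, so your Lie-type paragraph already covers it. You can also check it directly. Its extensions $\chi^{(4,2)}$ and $\chi^{(2^2,1^2)}$ to $S_6$ agree on each pair of classes swapped by the outer automorphism: $\chi^{(4,2)}$ is $3$ on transpositions and on triple transpositions, and $0$ on the other two swapped pairs. So $\chi^{(4,2)}$ is $\Aut(S_6)$-invariant and extends, because $\Aut(S_6)/S_6$ is cyclic.

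Separately, your ``direct'' Steinberg sketch is only heuristic. An alternating sum of induced characters is only a virtual character, and graph automorphisms permute the standard parabolics rather than normalizing each one. If you want to avoid quoting Feit, let $\Aut(S)$ act on the Tits building, whose simplices are the proper parabolic subgroups, and take its top homology; by Solomon--Tits this affords $\mathrm{St}$ on restriction to $S$.
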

%%%%%%%%%%%%%%%%%%%%%%%%5
\begin{lemma}\label{taze}
	Let \( N \subseteq G \subseteq \Aut(N) \) be a minimal normal subgroup of $G$, where \( N \cong N_1 \times \dots \times N_r \cong S^r \)  and \( S \) is a finite non-abelian simple group. Assume $I\subseteq \Irr(S|S')$ is a set of extendible characters of $S$ to $\Aut(S)$. Let $I$ induce a complete subgraph in $\Gamma_{nv}(S)$. Then the set $\{\chi_0\in \Irr(G)| [{\chi_0}_N,\chi^r]\not=0, \text{\ for some } \chi\in I\}$ induces a complete subgraph of $\Gamma_{nv}(G)$. 
\end{lemma}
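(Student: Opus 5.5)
The plan is to work with the characters $\chi^r=\chi\times\dots\times\chi\in\Irr(N)$ for $\chi\in I$ and to produce, for any two characters of $G$ lying over such products, a common vanishing element of $G$ on which both are non-zero.

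\textbf{Step 1: the characters involved.} Fix $\chi\in I$. By Lemma \ref{multi}, $\chi^r$ extends to $G$; let $\hat\chi$ be an extension. By Gallagher's theorem, the irreducible characters of $G$ lying over $\chi^r$ are exactly the products $\hat\chi\beta$ with $\beta\in\Irr(G/N)$. Since $\chi^r$ is $G$-invariant, these are precisely the $\chi_0$ with $[{\chi_0}_N,\chi^r]\neq 0$. Each such $\chi_0$ is non-linear, because $\chi$ is non-linear. Distinct $\chi_1,\chi_2\in I$ may give the same $G$-orbit, and that causes no problem.

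\textbf{Step 2: a good element of $N$.} Take two distinct characters in the set, say $\chi_{0}=\hat\chi_1\beta_1$ over $\chi_1^r$ and $\psi_{0}=\hat\chi_2\beta_2$ over $\chi_2^r$, where possibly $\chi_1=\chi_2$. We need $g\in\Van(G)$ with $\chi_0(g)\psi_0(g)\neq0$, and we look for $g$ inside $N$.

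If $\chi_1\neq\chi_2$, then $\chi_1,\chi_2$ are adjacent in $\Gamma_{nv}(S)$ because $I$ induces a complete subgraph. So there is $s\in\Van(S)$ with $\chi_1(s)\chi_2(s)\neq0$.

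If $\chi_1=\chi_2$, we need some $s\in\Van(S)$ with $\chi_1(s)\neq0$. This holds whenever $|I|\geq2$, using an edge at $\chi_1$. When $|I|=1$ it must be argued separately, for example by showing that $\Van(\chi_1)\neq\Van(S)$. That follows because a simple group is not a strongly $\mathcal H_1'$-group, by the structure given in Lemma \ref{ThmB}. This is a point I expect to need care.

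Now set $g=(s,1,\dots,1)\in N$. Then $g\in\Van(N)$, since some $\varphi\in\Irr(S)$ vanishes at $s$ and $\varphi\times1\times\dots\times1$ vanishes at $g$. By Lemma \ref{faithful}, applied to $G/\mathbf C_G(N)$ if $G$ is not itself inside $\Aut(N)$, we get $g\in\Van(G)$. Some care is needed here to lift vanishing from the quotient $G/\mathbf C_G(N)$ back to $G$; inflation handles this.

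\textbf{Step 3: non-vanishing of the two characters.} For $n\in N$ we have $\chi_0(n)=\chi_1^r(n)\beta_1(1)$, so
$$\chi_0(g)=\beta_1(1)\,\chi_1(s)\,\chi_1(1)^{r-1}\neq0,$$
and similarly $\psi_0(g)\neq0$. Hence $\chi_0$ and $\psi_0$ are adjacent in $\Gamma_{nv}(G)$, and the set induces a complete subgraph.

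\textbf{Main obstacle.} I expect the main difficulty to be the case $\chi_1=\chi_2$ with $|I|=1$, where one must guarantee a vanishing element of $S$ at which $\chi_1$ is non-zero. I would try first to derive this from Lemma \ref{ThmB} via solvability, and otherwise from the fact that the vanishing elements of $S$ include elements of several prime orders. A secondary concern is the reduction to $G\subseteq\Aut(N)$ when applying Lemma \ref{faithful}.
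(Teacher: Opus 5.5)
Your strategy is the paper's. Characters over $\chi^r$ restrict to $N$ as multiples of the $G$-invariant character $\chi^r$ (Lemma \ref{multi}), so it suffices to evaluate at an element of $\Van(N)\subseteq\Van(G)$ (Lemma \ref{faithful}). Using $(s,1,\dots,1)$ instead of the paper's diagonal element $(s,\dots,s)$ makes no difference. Your observation that, when $|I|\geq 2$, two characters over the same $\chi^r$ are handled by an edge at $\chi$ in $\Gamma_{nv}(S)$ is correct and needs no outside input. The passage to $G/\mathbf{C}_G(N)$ is unnecessary, since $G\subseteq\Aut(N)$ is a hypothesis.

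The gap is the case $|I|=1$ with $G\neq N$. This is not a degenerate corner: the proof of Theorem C applies the lemma with $I=\{\gamma\}$. In that case you need $\Van(\chi)\neq\Van(S)$ for the \emph{particular} $\chi\in I$, and Lemma \ref{ThmB} cannot supply this. It only shows that $S$ is not a \emph{strongly} $\mathcal{H}_1'$-group, i.e.\ that \emph{some} non-linear $\psi\in\Irr(S)$ has $\Van(\psi)\neq\Van(S)$. The quantifier is wrong: nothing in your argument rules out that $\chi$ itself vanishes on all of $\Van(S)$. What is needed is the stronger fact that no irreducible character of a non-abelian simple group vanishes on every vanishing element, i.e.\ $S$ is not an $\mathcal{H}_1'$-group even in the non-strong sense. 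The paper imports this as \cite[Theorem C]{ourself}. Your fallback does not replace it: knowing that $\Van(S)$ contains $p$-elements for several primes $p$ only forces those primes to divide $\chi(1)$ (by \cite[Corollary 2.2]{BDS}), which is no contradiction. As written, your proof is complete for $|I|\geq 2$ but not for $|I|=1$.
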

\begin{proof}
	Let $\chi, \theta \in I$ be two distinct characters. Consequently, there exists $g_{\chi, \theta} \in \Van(S)$ such that $\chi(g_{\chi, \theta}) \theta(g_{\chi, \theta}) \neq 0$. Thus, we have $(g_{\chi, \theta}^r) = (g_{\chi, \theta}, \dots, g_{\chi, \theta}) \in \Van(N) \subseteq \Van(G)$, as stated in Lemma \ref{faithful}. Therefore, $\chi^r((g_{\chi, \theta}^r)) \theta^r((g_{\chi, \theta}^r)) = \chi(g_{\chi, \theta})^r \theta(g_{\chi, \theta})^r \neq 0$. 
	
	Now, let’s assume $\chi_1, \theta_1 \in \Irr(G)$ such that $[\chi_1, \chi^r] \neq 0$ and $[\theta_1, \theta^r] \neq 0$ for some distinct characters $\chi, \theta \in I$.  Then ${\chi_1}_N$ and ${\theta_1}_{N}$ are multiples of $\chi^r$ and $\theta^r$, respectively, as  $\chi^r$ and $\theta^r$ can be extended to $G$, as indicated by Lemma \ref{multi}.  Then, it follows that $\chi_1((g_{\chi, \theta}^r)) \theta_1((g_{\chi, \theta}^r)) \neq 0$. 
	
	If $\chi_1, \chi_2 \in \Irr(G|\chi^r)$ are distinct characters for some $\chi \in I$, then $\chi_1((g,\dots,g)) \chi_2((g,\dots,g)) \neq 0$ for all $g\in \Van(S)\cap \Nv(\chi)$. If $\Van(S)\cap \Nv(\chi)=\emptyset$, then $S$ is a $\mathcal{H}_1'$-group which is not possible by \cite[Theorem C]{ourself}. 
		Thus, the proof is complete.  
\end{proof}
%\begin{theorem}
%	Let $G$ be a finite group and $\Gamma_{nv}(G)$ is triangle-free,  then $G$ is solvable. 
%\end{theorem}

\begin{lemma}\label{newchar}
	Let $G$ be a finite group and $\theta\in \Irr(G|G')$ such that $\theta(g)\not =0$ for some $g\in G-G'$. Then there is a $\lambda\in \Irr(G/G')$ such that $ \theta\not = \lambda\theta$. Moreover if $gG'$ is an element of  order  $n>2$ in $G/G'$, then there is at least two characters $\lambda_1, \lambda_2\in \Irr(G/G')$ such that $\theta, \lambda_1\theta, \lambda_2\theta$ are three distinct irreducible characters. 
\end{lemma}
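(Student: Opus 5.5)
The plan is the standard Gallagher-type argument: multiplying $\theta$ by linear characters of $G/G'$ is controlled by the character $\theta\bar\theta$. First I record the basic fact. For $\lambda\in\Irr(G/G')$ we have $\lambda\theta=\theta$ if and only if $\lambda$ is a constituent of $\theta\bar\theta$. Moreover, if $\lambda\theta=\theta$ and $\theta(g)\neq 0$, then evaluating at $g$ gives $\lambda(g)\theta(g)=\theta(g)$, so $\lambda(g)=1$. Hence the stabilizer
\[
T=\{\lambda\in\Irr(G/G') : \lambda\theta=\theta\}
\]
is a subgroup of $\Irr(G/G')$, and every element of $T$ lies in the annihilator of $gG'$. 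That is, $T\subseteq\{\lambda : \lambda(g)=1\}$.

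For the first claim, take $g\in G-G'$ with $\theta(g)\neq 0$. Since $gG'\neq 1$ in the abelian group $G/G'$, duality gives some $\lambda\in\Irr(G/G')$ with $\lambda(gG')\neq 1$. By the fact above, $\lambda\notin T$, so $\lambda\theta\neq\theta$.

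For the second claim, suppose $gG'$ has order $n>2$. The values $\lambda(gG')$, as $\lambda$ runs over $\Irr(G/G')$, realize every $n$-th root of unity. This is because restriction to $\langle gG'\rangle$ is surjective onto $\Irr(\langle gG'\rangle)$, and that cyclic group has a faithful character. Choose $\lambda_1$ with $\lambda_1(gG')=\varepsilon$, a primitive $n$-th root of unity, and put $\lambda_2=\lambda_1^2$, so $\lambda_2(gG')=\varepsilon^2$.

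Now I check the three characters are distinct. Because $n>2$, the values $1$, $\varepsilon$ and $\varepsilon^2$ are pairwise distinct. Evaluating at $g$ gives
\[
\theta(g),\quad \varepsilon\,\theta(g),\quad \varepsilon^2\theta(g),
\]
which are pairwise distinct because $\theta(g)\neq 0$. Hence $\theta$, $\lambda_1\theta$ and $\lambda_2\theta$ are three distinct characters, each irreducible since $\lambda_i$ is linear.

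There is no real obstacle here. The only point needing care is the existence of $\lambda_1$ with $\lambda_1(gG')$ a primitive $n$-th root of unity, which is the standard extension of characters from a subgroup of an abelian group. The hypothesis $\theta\in\Irr(G|G')$ is not actually needed for this argument beyond $\theta$ being irreducible.
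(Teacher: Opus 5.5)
Your proof is correct and is essentially the paper's argument. Both proofs pick $\lambda\in\Irr(G/G')$ with $\lambda(g)\neq 1$ and compare values at $g$. For the second part, the paper takes $\lambda_2=\overline{\lambda_1}$, using $\varepsilon\neq\overline{\varepsilon}$ when $n>2$, while you take $\lambda_2=\lambda_1^2$, which works equally well. Your detour through the stabilizer $T$ and the constituents of $\theta\overline{\theta}$ is correct but unnecessary, since evaluating at $g$ already does all the work.
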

\begin{proof}
Clearly there exists $\lambda\in \Irr(G/G')$ such that $g\not\in \ker\lambda$. Therefore, $\lambda(g)\theta(g)\neq\theta(g)$. This confirms the first part of the lemma. Now, let's assume that the order of $gG'$ in $G/G'$ is $n>2$. We define $\lambda_1=\lambda$ and $\lambda_2=\overline{\lambda}$, where $\lambda\in \Irr(G/G')$ and $\lambda(g)=\epsilon$, with $\epsilon$ being the primitive $n$th root of unity. Thus, we have $\lambda_1\theta(g)=\epsilon \theta(g)$ and $\lambda_2\theta(g)=\overline{\epsilon}\theta(g)$, leading us to the desired result.
\end{proof}
 We use the following Remark repeatedly in the proofs without further reference:
\begin{remark}\label{rem}
	Assume a finite group $H$ acts on a finite group $K$. We say this action is Frobenius, if $C_H(k)=1$ for all $k\in K$. There is a well-known theorem which says: If $G=K\rtimes H$, then the following are equivalent:
	
	1. $C_K(h)=1$ for all $h\in H$;
	
	2. $C_H(k)=1$ for all $k\in K$;
	
	3. $G$ is a Frobenius group with Frobenius kernel $K$ and Frobenius complement $H$. 
\end{remark}

\begin{lemma}\label{Frobenius}
	 Let \( G \) be a finite group and \( N \) be a normal subgroup of \( G \) such that  \( G/N \) is cyclic. Suppose that for each \( g \in G - N \), it holds that \( |C_G(g)| = |G/N| \). Then, \( G \) is a Frobenius group with Frobenius kernel \( N \). Furthermore, if \( G/N \) has a prime order and there exists an element \( h \in G - N \) such that \( |C_G(h)| = |G/N| \), then   \( G \) is a  Frobenius group with Frobenius  kernel \( N \).	
\end{lemma}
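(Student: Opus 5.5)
The plan is to construct an explicit complement $H$ to $N$ that is cyclic and self-centralizing, and then apply Remark \ref{rem}, checking condition 1, namely $C_N(h)=1$ for all $1\neq h\in H$. Throughout, write $n=|G/N|$.

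For the first statement, choose $g\in G$ with $gN$ a generator of the cyclic group $G/N$. If $n=1$ there is nothing meaningful to prove, so assume $n>1$; then $g\in G-N$. The image $gN$ has order $n$, so $n$ divides $o(g)$ and $|\langle g\rangle|\ge n$. On the other hand $\langle g\rangle\subseteq C_G(g)$ and $|C_G(g)|=n$ by hypothesis. Hence
$$\langle g\rangle=C_G(g),\qquad |\langle g\rangle|=n.$$
Since $\langle g\rangle$ maps onto $G/N$ and both have order $n$, we get $\langle g\rangle\cap N=1$. So $H:=\langle g\rangle$ is a complement and $G=N\rtimes H$.

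Now let $1\neq h\in H$. Then $h\notin N$, so $|C_G(h)|=n$ by hypothesis. Since $H$ is abelian, $H\subseteq C_G(h)$, and comparing orders gives $C_G(h)=H$. Therefore $C_N(h)=N\cap H=1$. By Remark \ref{rem}, $G$ is a Frobenius group with kernel $N$ and complement $H$.

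For the second statement, suppose $n=p$ is prime and $h\in G-N$ satisfies $|C_G(h)|=p$. Since $G/N$ has prime order, $hN$ automatically generates $G/N$. The argument above, applied to $h$ alone, gives $H=\langle h\rangle=C_G(h)$ of order $p$ and $G=N\rtimes H$. Every nontrivial element of $H$ has the form $h^i$ with $p\nmid i$. Each such element generates $H$, so $C_G(h^i)=C_G(H)=C_G(h)=H$. Hence $C_N(x)=1$ for every $1\neq x\in H$, and Remark \ref{rem} again yields that $G$ is Frobenius with kernel $N$.

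The only step needing care is the order count forcing $\langle g\rangle\cap N=1$. In the first statement it uses the hypothesis only for a generator $g$ and for the nontrivial elements of $H$. In the prime-order case the centralizer hypothesis for a single element suffices, because every nontrivial element of $H$ generates $H$.
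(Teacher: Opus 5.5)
Your proof is correct, and for the first statement it takes a different and more economical route than the paper.

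\textbf{The paper's route.} It first proves $(|N|,|G/N|)=1$ by a Sylow argument. If a prime $q$ divided both orders, then for $Q\in\Syl_q(G)$ one has $N\cap\mathbf{Z}(Q)\neq 1$. This subgroup, together with an element of $Q-N$, is meant to make the $q$-part of that element's centralizer exceed $|QN/N|$. The paper then gets a complement $H\cong G/N$ from Schur--Zassenhaus. Finally it checks $C_N(h)=1$ for $h\in H$ exactly as you do.

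\textbf{Your route.} You bypass both the coprimality step and Schur--Zassenhaus. For any lift $g$ of a generator of $G/N$, the chain $n\le|\langle g\rangle|\le|C_G(g)|=n$ forces $\langle g\rangle=C_G(g)$ to be a complement. Coprimality then follows afterwards as a standard property of Frobenius groups.

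\textbf{What this buys.}
\begin{enumerate}
\item Your argument is self-contained and exhibits the complement explicitly.
\item It treats both parts of the lemma uniformly. In the prime-order case you also spell out why $C_G(h^i)=C_G(h)$: each $h^i\neq 1$ generates $\langle h\rangle$. The paper leaves this implicit.
\item It avoids a soft spot in the paper's coprimality step. As written, that step works only if the element of $Q-N$ is chosen so that its image generates the cyclic group $QN/N$. That is precisely the choice your argument is built on.
\end{enumerate}

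Both arguments tacitly ignore the degenerate cases $N=1$ or $G=N$, which do not arise in the paper's applications.
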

\begin{proof}
We start by affirming that \( (|N|, |G/N|) = 1 \). Let's assume there is a prime \( q \) that divides both \( |N| \) and \( |G/N| \). Let \( Q \) be a Sylow \( q \)-subgroup of \( G \). The intersection \( Q \cap N \) is a normal subgroup of \( Q \). Therefore, we have \( N \cap \mathbf{Z}(Q) = Q \cap N \cap \mathbf{Z}(Q) \neq 1 \). 

Now, consider an element \( g \) in \( Q - N \). We have  \( |C_G(g)| = |G/N| \). However, since \( N \cap \mathbf{Z}(Q) \) is not trivial, we have that \( 1 \neq N \cap \mathbf{Z}(Q) \subset C_G(g) \). This leads to a contradiction, because \( |QN/N| \cdot |N \cap \mathbf{Z}(Q)| \) must divide \( |C_G(g)|_q = |G/N|_q=|QN/N| \), given that \( QN/N \) is cyclic. 

From this, we conclude that \( G = N \rtimes H \), where \( H \cong G/N \) is a cyclic group. Consequently, we find that \( C_G(g) \cap N = 1 \) for all \( g \in H \). Thus, \( G \) is a Frobenius group, with \( N \) as the Frobenius kernel.

In the second part of the lemma, if \( |G/N| \) is a prime \( p \) and there exists an element \( h \in G - N \) such that \( |C_G(h)| = p \), then it follows that  $h$ has order $p$ and \( G/N \cong \langle h \rangle \) is a group of order \( p \). Moreover $G=N\rtimes \langle h \rangle$ and so using Remark \ref{rem},  $G$ is a Frobenius group with Frobenius kernel $N$.
\end{proof}

\begin{lemma}\label{orbit}	Let $G$ be a finite group acting on a module $M$ over a finite field and $1 \in S \subseteq M $ be a union of some $G$-orbits of $M$.  Assume  $y\in G$ is an element of order $2$ and   for every $v \in M -S $,  $C_G(v)=\langle y^g\rangle$, for  some $g\in G$. Assume for each $1\not =x\in S$, $(|C_G(x)|, 2)=1$.  Then, $|M| -|S| =|y^G| (|C_M ( y )|- 1)$
\end{lemma}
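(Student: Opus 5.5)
We count the pairs $(v,y')$, where $v\in M-S$ and $y'$ runs over the conjugacy class $y^G$, subject to the condition $y'\in C_G(v)$, equivalently $v\in C_M(y')$. We count this set in two ways; the identity then follows from comparing the two counts. Here $1$ denotes the zero vector of $M$, written multiplicatively as in the statement.

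\emph{Counting by $v$.} Fix $v\in M-S$. By hypothesis $C_G(v)=\langle y^g\rangle$ for some $g\in G$, which is a group of order $2$. Its unique involution is $y^g\in y^G$. So exactly one element of $y^G$ lies in $C_G(v)$, and the number of pairs is $|M|-|S|$.

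\emph{Counting by $y'$.} Fix $y'=y^g\in y^G$. The pairs with this second coordinate correspond to the vectors in $C_M(y')\cap(M-S)$. Since $C_M(y^g)=C_M(y)^g$, we have $|C_M(y')|=|C_M(y)|$. Next we locate $C_M(y')\cap S$. If $1\neq x\in S$, then $|C_G(x)|$ is odd, so $y'\notin C_G(x)$, that is, $x\notin C_M(y')$. Hence $C_M(y')\cap S=\{1\}$, using $1\in S$. Therefore $|C_M(y')\cap (M-S)|=|C_M(y)|-1$, and summing over $y^G$ gives $|y^G|(|C_M(y)|-1)$.

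Equating the two counts yields $|M|-|S|=|y^G|(|C_M(y)|-1)$. The only point that needs care is that each $C_G(v)$ contains exactly one conjugate of $y$. This holds because $\langle y^g\rangle$ has order exactly $2$, as $y$ has order $2$, so its unique non-identity element is $y^g$ itself.
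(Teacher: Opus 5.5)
Your proof is correct and is essentially the paper's argument. The paper partitions $M-S$ into the sets $C_M(y^g)-\{1\}$ for $y^g\in y^G$, and your double count of the pairs $(v,y')$ is that same partition written as a count. Your version is slightly more careful: it states why the blocks are disjoint (each $C_G(v)$ contains exactly one conjugate of $y$), and it avoids the paper's unstated claim $|\Omega|=|y^G|$ that distinct conjugates give distinct blocks, which fails literally when $C_M(y)=1$.
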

\begin{proof}
 Set $M^{\#}=M-1$. By the hypothesis $C_{M^{\#}}(y^g)\subseteq M-S$, for every $g\in G$ and  $ M-S \subseteq  \cup_{g\in G} C_{M^{\#}(y^g)}$. Thus $M-S=\cup_{g\in G} C_{M^{\#}}(y^g)$. On the other hand $\Omega=\{C_{M^{\#}}(y^g)| g\in G\}$  forms a partition for $M-S$. As $|\Omega|=|y^G|$ we get that $|M-S|=|y^G||C_{M}(y)-1|$ as desired. 
	%%%%%%%%%
	%x∈I (A5) CN♯ (< x >), where I (A5) is the set of
	%involutions of A5. It is well-known that all elements in I (A5) are A5-conjugate. So if
	%CN♯ (< x >) ̸= 1 for some x ∈ I (A5), then CN♯ (< xg >) ̸= 1, where g is an element
	%of A5. Therefore ∪
	%x∈I (G) CN♯ (< x >) = N♯
	%. Also CN♯ (< x >)’s forms a partition for
	%N♯
	%, for all x ∈ I (A5). Hence, (p
	%α − 1)/(p
	%α0 − 1) = |A5 : CA5
	%(< x >)| = 15,
\end{proof}
\begin{lemma}\label{pgroup}
	Let $G$ be a $p$-group  such that $|G|/|{\bf Z}(G)|=p^{2n+1}$ for some integer $n$. Then there is no $\chi\in \Irr(G|{\bf Z}(G))$ with  $\Van(\chi)=\Van(G)$. 
\end{lemma}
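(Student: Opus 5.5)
The plan is to show that such a $\chi$ would have to vanish on every non-central element of $G$. Its degree would then be forced to equal $\sqrt{|G:{\bf Z}(G)|}$, which is impossible when this index is an odd power of $p$.

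\emph{Step 1: identify $\Van(G)$.} Central elements are never vanishing, so $\Van(G)\subseteq G-{\bf Z}(G)$. For the reverse inclusion I will invoke the theorem of Isaacs, Navarro and Wolf: in a finite nilpotent group every non-vanishing element is central. Since $G$ is a $p$-group this applies, and gives
$$\Van(G)=G-{\bf Z}(G).$$
This is the step I regard as the real input. It is the only place where nilpotency is used in an essential way, and the argument depends on quoting that theorem correctly. If I wanted a self-contained route, I would try to show directly that every $g\notin{\bf Z}(G)$ is a zero of some irreducible character. The approach would be an induction on $|G|$, using a character of $G/N$ for a suitable normal subgroup $N$ with $g\notin{\bf Z}(G)N$. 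The proof would then reduce to the case where $g$ is central modulo every minimal normal subgroup. That case is essentially the content of the Isaacs--Navarro--Wolf theorem, so I prefer to quote it.

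\emph{Step 2: vanishing off the centre.} Suppose $\chi\in\Irr(G|{\bf Z}(G))$ satisfies $\Van(\chi)=\Van(G)$. By Step 1, $\chi(g)=0$ for every $g\in G-{\bf Z}(G)$. Moreover $\chi_{{\bf Z}(G)}=\chi(1)\lambda$ for some linear $\lambda$, so $|\chi(z)|=\chi(1)$ for all $z\in{\bf Z}(G)$. The first orthogonality relation $[\chi,\chi]=1$ then reads
$$|G|=\sum_{g\in G}|\chi(g)|^2=\sum_{z\in{\bf Z}(G)}\chi(1)^2=|{\bf Z}(G)|\,\chi(1)^2,$$
and hence $\chi(1)^2=|G:{\bf Z}(G)|$.

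\emph{Step 3: parity contradiction.} Since $G$ is a $p$-group, $\chi(1)=p^a$ for some $a\ge 0$. Step 2 then gives $p^{2a}=p^{2n+1}$, which is impossible. So no such $\chi$ exists.

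The hypothesis $\chi\in\Irr(G|{\bf Z}(G))$ is used only to place $\chi$ in the relevant family. The computation in Step 2 works for any irreducible character that vanishes on $G-{\bf Z}(G)$.
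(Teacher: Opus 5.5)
Your proof is correct and follows the paper's argument. The paper likewise first identifies $\Van(G)=G-\mathbf{Z}(G)$ for the $p$-group $G$ by quoting a known result, just as you quote Isaacs--Navarro--Wolf, and then concludes $\chi(1)^2=|G:\mathbf{Z}(G)|=p^{2n+1}$, which is impossible. The only difference is that the paper cites \cite[Problem 6.3]{Isaacs} for the degree equality, where you derive it directly from $[\chi,\chi]=1$.
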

\begin{proof}
	Assume  there is  $\chi\in \Irr(G|{\bf Z}(G))$ with  $\Van(\chi)=\Van(G)$. Then $G-{\bf Z}(G)=\Van(\chi)$, by \cite[Lemma 2.9]{BDS}.  Thus by \cite[Problem 6.3]{Isaacs} we get that $\chi(1)=\sqrt{p^{2n+1}}$, which is impossible. 
\end{proof}
\begin{lemma}\label{extraspecial}
Let \( G \) be a finite \( p \)-nilpotent group, where \( p \) is a prime. Assume \( H \) is the Hall \( p' \)-subgroup of \( G \), and that \( G/H \cong P \in \Syl_p(G) \) is an extraspecial \( p \)-group. Also, assume \( G/G' \cong P/P' \) and  $G-G'\subseteq \Van(\chi)$ for all $\chi\in \Irr(G|G')$. If   there exists an element \( x \in G' - H \) such that \( \chi(x) = 0 \) for all \( \chi \in \Irr(G|H) \) then we find that \( P \cong Q_8 \) and \( G \) forms a Frobenius group with \( H \) as its Frobenius kernel. In particular, \( H \) is abelian.
	\end{lemma}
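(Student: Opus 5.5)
The plan is to use column orthogonality at a few well-chosen elements to show that every non-identity element of $P$ acts fixed-point-freely on $H$. The Frobenius structure, $P\cong Q_8$ and the abelianness of $H$ then follow from standard facts.

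First I fix the structure. Write $G=H\rtimes P$. Since $G/H\cong P$, we have $G'H/H=P'={\bf Z}(P)$, which has order $p$. The hypothesis $G/G'\cong P/P'$ gives $|G'|=|H||P'|$, so $G'=HP'=H{\bf Z}(P)$. In particular, every element of $P-{\bf Z}(P)$ lies in $G-G'$.

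\emph{Step 1: ${\bf Z}(P)$ acts fixed-point-freely on $H$.} Let $x\in G'-H$ be as in the hypothesis. Split $\Irr(G)=\Irr(G/H)\cup\Irr(G|H)$. The characters in $\Irr(G|H)$ all vanish at $x$, so the second orthogonality relation gives
$$|C_G(x)|=\sum_{\chi\in\Irr(G/H)}|\chi(x)|^2=|C_{P}(xH)|=|P|,$$
because $xH$ is a non-trivial central element of $G/H\cong P$. Write $x=x_px_{p'}$. The $p$-part $x_p$ maps onto the non-trivial element $xH$ of ${\bf Z}(P)$. After conjugating $x_p$ into $P$, we may take $x_p=z\in{\bf Z}(P)$, since $P\cap H=1$ and $P\to G/H$ is an isomorphism. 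Now $x_{p'}\in C_G(x)$ and $|C_G(x)|=|P|$ is a power of $p$, so $x_{p'}=1$ and $x=z$. Since $C_G(z)\supseteq P$ and $|C_G(z)|=|P|$, we get $C_G(z)=P$, that is, $C_H(z)=1$. The same then holds for every generator of ${\bf Z}(P)$.

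\emph{Step 2: elements of $P-{\bf Z}(P)$ act fixed-point-freely on $H$.} Take $g\in P-{\bf Z}(P)\subseteq G-G'$. By hypothesis every $\chi\in\Irr(G|G')$, which means every non-linear irreducible character of $G$, vanishes at $g$. Column orthogonality therefore gives
$$|C_G(g)|=\sum_{\lambda\in\Irr(G/G')}|\lambda(g)|^2=|G:G'|=|P:P'|=p^{2n}.$$
On the other hand, $C_P(g)$ has index $p$ in the extraspecial group $P$, so $|C_P(g)|=p^{2n}$. Hence $C_G(g)=C_P(g)$, and so $C_H(g)=1$. Combining Steps 1 and 2, $C_H(g)=1$ for all $1\neq g\in P$. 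By Remark \ref{rem}, $G$ is a Frobenius group with kernel $H$ and complement $P$.

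\emph{Step 3: identify $P$ and show $H$ is abelian.} A $p$-group that is a Frobenius complement is cyclic or generalized quaternion. An extraspecial group is non-abelian, so it is not cyclic, and the only generalized quaternion group that is extraspecial is $Q_8$. Thus $P\cong Q_8$. The central involution $z$ of $Q_8$ acts fixed-point-freely on $H$. A fixed-point-free automorphism of order $2$ inverts every element, so $h\mapsto h^{-1}$ is an automorphism of $H$, and therefore $H$ is abelian.

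I expect Step 1 to be the main obstacle. The delicate point is passing from the numerical identity $|C_G(x)|=|P|$ to the conclusion that $x$ is (conjugate to) a central element of $P$ with $C_H(z)=1$. This needs some care with the $p$- and $p'$-parts of $x$ and with the conjugation into $P$. Steps 2 and 3 are routine once the right orthogonality sum is written down.
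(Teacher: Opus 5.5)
Your proof is correct and follows essentially the same route as the paper. Column orthogonality at $x$ gives $|C_G(x)|=|P|$, so $x$ can be taken in ${\bf Z}(P)$ with $C_G(x)=P$; column orthogonality at $y\in P-{\bf Z}(P)$ gives $C_G(y)=C_P(y)$; and the structure of $p$-group Frobenius complements then forces $P\cong Q_8$ and $H$ abelian. Your handling of the $p$- and $p'$-parts of $x$ and of $|C_P(y)|=|P:P'|$ is more explicit than the paper's, and, like the paper, you tacitly need $H\neq 1$ when invoking the Frobenius criterion (if $H=1$, every extraspecial $P$ satisfies the hypotheses).
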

\begin{proof}
The assumption indicates that all irreducible characters, except for the ones in \( \Irr(G/H)=\Irr(P) \) are vanishing on $x$. By applying the second orthogonality relation (see \cite[Theorem 2.18]{Isaacs}), we can conclude that \( |C_G(x)| = \sum_{\theta\in \Irr(G/H)} |\theta(x)|^2 =|P|\).  The last equality holds  because $xH\in G'/H\cong P'={\bf Z}(P)$.  This shows that $x$ is a $p$-element. So we may assume $x\in {\bf Z}(P)=P'\subseteq G'-H$.   Now, consider \( y \in P - {\bf Z}(P) \). Applying the second orthogonality relation on $y
$ we get that $|C_G(y)|=\sum_{\theta\in \Irr(G/G')} |\theta(y)|^2 =|P|/|P'|$. 
Therefore, we have \( C_G(y) \subseteq C_G(x) \subseteq P \), which shows that \( P \) is acting Frobeniusly on \( H \). Since $P$ is an extraspecial $p$-group (not a cyclic group), we conclude that $G$ is a generalized quaternion group and the only possibility is  \( P \cong Q_8 \). Hence as $G$ has a Frobenius complement of even order, then  the Frobenius kernel of $G$, means $H$,  is abelain. 	
\end{proof}

%In the following remark, we highlight some notable values of non-linear irreducible characters when the group \( G \) possesses exactly one such character. For a detailed characterization of groups with precisely one non-linear irreducible character, refer to \cite{seitz}. We use this remark without further reference. 

%\begin{remark}\label{remark}
%	(a) Let \( G \) be an extraspecial \( 2 \)-group and let \( \chi \) be the sole non-linear irreducible character of \( G \). Then, for every \( g \in {\bf Z}(G) \) where \( 1 \neq g \), it holds that \( \chi(g) = -\sqrt{|G:G'|} \). 
	
%	(b) Let \( G \) be a Frobenius group of order \( p^a(p^a-1) \) for some integer \( p \) and prime \( a \), where the Frobenius kernel is an elementary abelian group of order \( p^a \) and the Frobenius complements are cyclic. Let \( \chi \) be the only non-linear irreducible character of \( G \). Then, for all non-trivial element  \( g \) in the Frobenius kernel of \( G \), it follows that \( \chi(g) = -1 \).   
%	\end{remark}
\begin{lemma}\label{pp}
	Let $G$ be a Frobenius group. %with a cyclic Frobenius group 
	% of order $p^a-1$ for some prime $p$ and integer $a$.
	 Assume the Frobenius kernel of $G$ is $P\in \Syl_p(G)$ and $G$ has a minimal normal subgroup $K\subset P$, where $P/K$ is a chief factor of $G$ with order $p^a$ and $|K|=p^b$, for some integers $a$ and $b$. Let $\Van(G)\cap P\not=\emptyset$ and for all $\chi\in \Irr(G|K)$ we have $\Van(G)\cap P\subseteq \Van(\chi)$. Then $\Van(G)\cap P\subseteq (P-K)$ and $b<a$.  
\end{lemma}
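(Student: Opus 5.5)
The plan is to use the standard description of the irreducible characters of a Frobenius group and then apply the second orthogonality relation twice. Let $H$ be a Frobenius complement, so $G=P\rtimes H$ with $H\neq 1$. Then
$$\Irr(G)=\Irr(G/P)\ \cup\ \{\theta^G \mid 1\neq\theta\in\Irr(P)\}.$$
Every $\chi\in\Irr(G/P)$ satisfies $\chi(x)=\chi(1)\neq 0$ for $x\in P$. Moreover $\Irr(G|K)$ consists exactly of the induced characters $\theta^G$ with $\theta\in\Irr(P|K)$. So every character of $G$ outside $\Irr(G|K)$ lies in $\Irr(G/K)$.

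\emph{Step 1: $\Van(G)\cap K=\emptyset$.} Suppose $1\neq x\in K\cap\Van(G)$. By hypothesis every $\chi\in\Irr(G|K)$ vanishes at $x$. For $\chi\in\Irr(G/K)$ we have $\chi(x)=\chi(1)$, since $x\in K$. The second orthogonality relation \cite[Theorem 2.18]{Isaacs} then gives
$$|C_G(x)|=\sum_{\chi\in\Irr(G/K)}\chi(1)^2=|G/K|=|P/K|\,|H|.$$
However, $x$ is a nontrivial element of the Frobenius kernel, so $C_G(x)\subseteq P$ is a $p$-group. Since $H\neq 1$ and $(|H|,p)=1$, the number $|P/K|\,|H|$ is not a power of $p$. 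This contradiction shows $\Van(G)\cap P\subseteq P-K$.

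\emph{Step 2: $b<a$.} Since $\Van(G)\cap P\neq\emptyset$, Step 1 lets us pick $x\in\Van(G)\cap P$ with $x\notin K$. Again all $\chi\in\Irr(G|K)$ vanish at $x$, so orthogonality applied in $G$ and in $G/K$ gives
$$|C_G(x)|=\sum_{\chi\in\Irr(G/K)}|\chi(x)|^2=|C_{G/K}(xK)|.$$
Now $G/K$ is a Frobenius group with kernel $P/K$ and $xK\neq 1$ lies in that kernel. Hence $C_{G/K}(xK)\subseteq P/K$, and so $|C_G(x)|\le p^a$.

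For the lower bound, $K$ is a minimal normal subgroup of $G$ contained in the $p$-group $P$. Thus $K\cap\mathbf{Z}(P)$ is a nontrivial normal subgroup of $G$ contained in $K$, which forces $K\subseteq\mathbf{Z}(P)$. Consequently $K\langle x\rangle\subseteq C_G(x)$. Since $x\notin K$, we get $|K\langle x\rangle|\ge p^{b+1}$. Combining the two bounds, $p^{b+1}\le p^a$, i.e. $b<a$.

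The only delicate point is the lower bound in Step 2: we need $K$ to centralize $x$. This is supplied by minimal normality of $K$ inside the $p$-group $P$, which forces $K\le\mathbf{Z}(P)$. Everything else is the routine Frobenius-group character description combined with orthogonality.
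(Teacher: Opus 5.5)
Your proof is correct and takes essentially the same route as the paper: you apply the second orthogonality relation at an element of $\Van(G)\cap P$, where all of $\Irr(G|K)$ vanishes, and compare the resulting value $|C_{G/K}(xK)|$ with the centralizer bounds coming from the Frobenius action and from $K\subseteq \mathbf{Z}(P)$. Your Step 1 in fact evaluates the sum correctly as $|G/K|=|P/K|\,|H|$ (since $xK$ is trivial), whereas the paper's displayed chain writes $|P/K|$ at that point; the contradiction holds either way because $|H|>1$ is prime to $p$.
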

\begin{proof}
Let  $g
\in \Van(G)\cap P$. As $K$ is a minimal normal subgroup of $G$ we find that $K\subseteq {\bf Z}(P)$. If   \( g \in  K \cap \Van(G) \), then  by the second orthogonality relation we have: 

$$p^{b+a}=|C_{G}(g)|=\sum_{\theta\in \Irr(G/K)}|\theta(g)|^2=|C_{G/K}(gK)|=|P/K|=p^a,$$ which is a contradiction. Hence, we can assume \( g\in \Van(G)\cap P\subseteq   P - K \). By applying the second orthogonality relation to \( g \)  we derive that:
\[
p^b = |K| < |C_{G}(g)| = \sum_{\theta \in \Irr(G/K)} |\theta(g)|^2 = |C_{G/K}(gK)|=|P/K|= p^a,
\]
where the first inequality holds because \( K \subseteq \mathbf{Z}(P) \). Consequently, we find that \( b < a \). 	
\end{proof}
\begin{lemma}\label{pq}
	Assume $G$ has a normal and abelian  subgroup  $N={\bf Z}(G)\times K$, where $K$ is a  subgroup of $G$ with $(|G/K|,|K|)=1$.  Then $N\subseteq \Nv(G)$
\end{lemma}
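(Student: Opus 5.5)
We have to show that no element of $N$ is a zero of any $\chi\in\Irr(G)$. Take $g\in N$ and write $g=zk$ with $z\in{\bf Z}(G)$ and $k\in K$. For any $\chi\in\Irr(G)$ we have $\chi(g)=\lambda(z)\chi(k)/\chi(1)\cdot\chi(1)$, where $\lambda$ is the central character; more simply, $\chi(zk)=\omega_\chi(z)\chi(k)$ with $|\omega_\chi(z)|=1$. So it suffices to show $\chi(k)\neq 0$ for every $k\in K$ and every $\chi\in\Irr(G)$.

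First note that $K$ is normal in $G$. Since $N$ is abelian and $(|G/K|,|K|)=1$, $K$ is the unique Hall $\pi$-subgroup of $N$, where $\pi$ is the set of primes dividing $|K|$. Indeed, $|N/K|$ divides $|G/K|$, which is coprime to $|K|$. Hence $K$ is characteristic in $N \trianglelefteq G$. Moreover $K$ is a normal abelian Hall subgroup of $G$.

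Now fix $\chi\in\Irr(G)$ and let $\theta\in\Irr(K)$ be an irreducible constituent of $\chi_K$. Since $K$ is abelian, $\theta$ is linear. Let $T=I_G(\theta)$. Because $K$ is a normal Hall subgroup, Gallagher/coprimality (\cite[Corollary 6.28]{Isaacs} or \cite[Theorem 8.15]{Isaacs}, using $(|T/K|,|K|)=1$) gives that $\theta$ extends to $T$. Therefore every $\psi\in\Irr(T|\theta)$ restricts to $K$ as $\psi(1)\theta$, and by Clifford's correspondence
\[
\chi_K=e\sum_{t\in G/T}\theta^t,\qquad e=\psi(1).
\]
So $\chi(k)=e\sum_{t}\theta^t(k)$, a sum of roots of unity, which could a priori vanish. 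This is the main obstacle, and I would use the hypothesis $N\supseteq{\bf Z}(G)\times K$ abelian more sharply.

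The key observation is that $K\le N$ and $N$ is abelian, but we need $K$ central, or at least that $G$ fixes each $\theta$. Consider the coprime action of $G/C_G(K)$ on $K$; $C_G(K)\supseteq N\supseteq K$. The $\pi'$-group $G/K$ acts on the abelian $\pi$-group $K$, so $K=[K,G]\times C_K(G)$ by Fitting's lemma. Here $C_K(G)\le{\bf Z}(G)\cap K=1$ because the product ${\bf Z}(G)\times K$ is direct. Thus if $K\neq 1$ then $[K,G]=K$.

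At this point I expect the intended argument is different: the vanishing claim should follow from the second orthogonality relation. I would compute
\[
|C_G(k)|=\sum_{\chi}|\chi(k)|^2
\]
and combine it with the coprime structure. Alternatively, I would apply \cite[Lemma 2.9]{BDS}-type results: an element $x$ with $\chi(x)=0$ has $x$ of order divisible by primes dividing $\chi(1)$. For $k\in K$, $\chi(1)/\theta(1)$ divides $|G:K|$ (\cite[Corollary 11.29]{Isaacs}), which is coprime to $o(k)$. The standard fact is that if $\chi(g)=0$ then $g$'s $p$-part is nontrivial for some $p$ dividing $\chi(1)$. More concretely, $\chi(k)\equiv\chi(1)\pmod{\mathfrak p}$ for primes $\mathfrak p$ over $p\mid o(k)$ fails to apply directly, so instead I use this: $\chi(k)$ is a sum of $|G:T|$ roots of unity of $\pi$-order, times $e$, with $|G:T|$ a $\pi'$-number. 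Modulo a prime over $q\in\pi$, each $\pi$-root of unity is congruent to $1$ when $K$ is a $q$-group; in general I reduce prime by prime by writing $k$ as a product of its $q$-parts. The $G$-orbit of $\theta$ has $\pi'$-size, so a vanishing sum $\sum_t\theta^t(k)=0$ of $\pi$-power roots of unity with $\pi'$ many terms is impossible. The reason is that a vanishing sum of $m$-th roots of unity, $m$ a $\pi$-number, has length that is a $\mathbb{N}$-combination of primes in $\pi$ (Lam–Leung), and such a length is never coprime to all of $\pi$. Hence $\chi(k)\neq0$. This final Lam–Leung-type step is the crux; if that reference is unavailable, I would replace it by the congruence $\sum_t\theta^t(k)\equiv|G:T|\not\equiv 0$ modulo a prime over $q$ in the case where $k$ is a $q$-element, and reduce the general case via the factorization $K=\prod_q K_q$ and the induced decomposition of the character.
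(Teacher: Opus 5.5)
Your reduction to $k\in K$ via the central character and the Clifford/Ito step ($\chi_K=e\sum_t\theta^t$, with $\chi(1)$ and $|G:T|$ prime to $|K|$) are fine, and they follow the same skeleton as the paper's proof. The gap is the last step, and it cannot be closed.

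Lam--Leung only puts the length of a vanishing sum of $m$-th roots of unity in $\sum_{q\in\pi}\mathbb{N}q$, and such a number can be prime to every $q\in\pi$ (e.g.\ $5=2+3$). Your fallback fails too: modulo a prime over $q$ you only get $\chi(k)\equiv\chi(k_{q'})$, and the $G$-orbit of $\theta$ is not a product of the orbits of its $q$-components.

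In fact the statement is false when $|K|$ has two prime divisors. Let $\zeta\in\mathbb{F}_{16}$ and $\zeta'\in\mathbb{F}_{81}$ have order $5$. Let $H=\langle h\rangle\cong C_5$ act on $K=\mathbb{F}_{16}^{+}\times\mathbb{F}_{81}^{+}$ by $(u,v)\mapsto(\zeta u,\zeta' v)$, and let $G=K\rtimes H$. This is a Frobenius group, so ${\bf Z}(G)=1$, $N=K$ is abelian and normal, and $(|G/K|,|K|)=1$. With absolute traces and $\omega=e^{2\pi i/3}$, put $\theta(u,v)=(-1)^{\mathrm{Tr}(u)}\omega^{\mathrm{Tr}(v)}$. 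Then $I_G(\theta)=K$, so $\chi=\theta^G\in\Irr(G)$ has degree $5$, and $\chi(u,v)=\sum_{j=0}^{4}(-1)^{\mathrm{Tr}(\zeta^j u)}\omega^{\mathrm{Tr}(\zeta'^j v)}$. In $\mathbb{F}_{16}$ one has $\mathrm{Tr}(1)=0$ and $\mathrm{Tr}(\zeta^j)=1$ for $j\neq 0$. In $\mathbb{F}_{81}$ one has $\mathrm{Tr}(1)=1$ and $\mathrm{Tr}(\zeta'^j)=2$ for $j\neq 0$. Take $u=1$ and $v=\zeta'^3+2\zeta'^4$. The signs are $(1,-1,-1,-1,-1)$ and the $\omega$-factors are $(1,\omega,\omega^2,1,1)$, so $\chi(k)=1-\omega-\omega^2-2=0$ for this element $k\in N$ of order $6$.

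The paper's proof has the same flaw. It infers $K\subseteq\Nv(G)$ from the coprimality of the degrees with $|K|$ via \cite[Corollary 2.2]{BDS}. That principle is sound for elements of prime-power order but, by the example above, not for elements of mixed order.

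What is true, and is all the paper actually uses, is the lemma for $K$ a $p$-group; in Lemma \ref{quaterniun} and in Step 6 of the proof of Theorem B, $K$ is indeed a $p$-group. In that case your own congruence argument finishes it at once. Since $\chi(1)$ divides $|G:K|$, for $k\in K$ one has $\chi(k)\equiv\chi(1)\not\equiv 0$ modulo a prime over $p$. Your central-character reduction, $\chi(zk)=\epsilon\,\chi(k)$ with $|\epsilon|=1$ for $z\in{\bf Z}(G)$, then handles ${\bf Z}(G)$ more directly than the paper's computation with $\gamma\times\alpha$.
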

\begin{proof}
	Clearly ${\bf Z}(G)\subseteq \Nv(G)$. Moreover, $K$  is a normal and abelian  Hall subgroup of $G$ and so  irreducible  character degrees of $G$ are coprime to $|K|$, implying that $K\subseteq \Nv(G)$, by applying \cite[Corollary 2.2]{BDS}. Assume $xy\in \Van(\chi)$, for some $\chi\in \Irr(G|G')$, where $x\in {\bf Z}(G)$ and $y\in K$. Assume $\chi\in \Irr(G|\gamma\times \alpha)$, for some $\gamma\in \Irr({\bf Z}(G))$ and $\alpha\in \Irr(K)$.  Then we have:
	$$\chi_N=b\sum_{i=1}^t (\gamma^{g_i}\times \alpha^{g_i} )=\gamma\times (b\sum_{i=1}^t \alpha^{g_i}),$$ where
	$\{g_1, \dots, g_t\}$ is the set of right  transversal of $I_G(\gamma\times \alpha)=I_G(\gamma)\cap I_G(\alpha)=I_G(\alpha)$ in $G$ and $b$ is an integer. Therefore we conclude that $\chi(xy)=\gamma(x)(b\sum_{i=1}^t \alpha(y)^{g_i})=0$, this implies that $\chi(y)=\gamma(1)\times (b\sum_{i=1}^t \alpha(y)^{g_i})=0$ contradicting that  $K\subseteq \Nv(G)$.   
\end{proof}
\section{Main Results}

To begin this section, we will focus on proving Theorem C, as its proof stands independently from the other key theorems. It's important to note the following remark that will aid in understanding the proof of Theorem C.

\begin{remark}\label{an}
	It is well-known that irreducible characters of $S_n$ are parameterized by
	partitions of $n$, and so we use  
	$\chi^{\lambda}$
	to denote the character corresponding to a partition
	$\lambda$. Moreover, looking at \cite{JK}, we get that for $n\geq 9$, 	$(\chi^{(n-i, 1^i)})_{A_n}\in \Irr(A_n)$ for $i\in \{1,2,3\}$.  Furthermore,  according to Murnaghan-Nakayama formula (see \cite[2.4.7]{JK}) we see that all of these three characters are not vanishing on cycles of length $5$, while cycles of length five are in $\Van(A_n)$. 
\end{remark}

{\bf The proof of Theorem C.}
Assume that $G$ is a  non-solvable group where $\Gamma_{nv}(G)$ is triangle-free.  Let $S$ denote the solvable radical  subgroup of $G$. Consider $R/S$, which is a chief factor of $G$, and define $C/S = C_{G/S}(R/S)$. This leads to the inclusion $R/S\cong RC/C \subseteq G/C \subseteq \Aut(R/S)$. %Consequently, the graph $\Gamma_{nv}(G/C)$ will have at least two vertices that are not adjacent, as it is triangle-free. This indicates that either $G/C$ is a strongly $\mathcal{H}_2'$-group, or there exists a normal subgroup $M/C$ such that $\Gamma_{nv}(G/M)$ has at most two non-linear irreducible characters. In the case where $|\Irr(G/M|G'M/M)| = \{\chi, \theta\}$, it must hold that the characters $\chi$ and $\theta$ are adjacent in $\Gamma_{nv}(G/C)$.
Let \( R/S\cong R_1 \times \dots \times R_k \), where each \( R_i \) is isomorphic to the non-abelian simple group \( R_0 \). 
Initially,  consider the case where $R_0$  is one of the groups listed in Lemma \ref{ono}. Then looking at Atlas of finite groups \cite{atlas}  and Remark \ref{an}  we can find a set $I\subseteq \Irr(R_0)-\{1_{R_0}\}$  such that $|I|\geq 3$ and $I$ satisfies the hypothesis of Lemma \ref{taze} and so $\Gamma_{nv}(G)$ has  a complete subgraph with at least three vertices.

Next, let's examine the case where $R_0$ is a finite simple groups not listed in Lemma \ref{ono}. 
By utilizing Lemmas \ref{ono} and \ref{faithful}, we see that the set $RC/C - C/C$ is contained within $\Van(G/C)$, which in turn is contained in $\Van(G)$. Assume $\gamma\in \Irr(R_0)$ is an extendible character to $\Aut(R_0)$, whose existence is guaranteed by Lemma \ref{extend}. Notably, $\gamma^k\in \Irr(RC/C)$ extends to a character $\chi \in \Irr(G/C)$, by Lemma \ref{multi}. First assume $|G/RC|>1$. Thus there exists at least two characters above $\gamma^k$, say  $\chi$ and $\chi_0$ and according to Lemma \ref{taze} they are adjacent. Moreover $\Van(\chi)\cap RC/C=\Van(\chi_0)\cap RC/C$, as $(\chi_0)_{RC/C}$ is a multiple of $\gamma^k$.   Now, take $1 \neq \eta = \eta_0^k \in \Irr(RC/C)$ to be a character such that $ \gamma \not =\eta_0\in \Irr(R_0)$. Let $\theta \in \Irr(G/C|\eta)$. Then  $(\Nv(\chi)\cap RC/C)-\{C/C\} \subseteq (\Van(\theta) \cap RC/C)$, or else the set $\{\chi, \chi_0, \theta\}$ would form a triangle. This leads to the conclusion that $\frac{\theta(1)\chi(1)}{|RC/C|} = [\theta_{RC/C}, \chi_{RC/C}] = [\theta_{RC/C}, \gamma^k] = 0 $, which ultimately presents a contradiction. 

Thus we may assume $|G/RC|=1$ and so $R/S\cong RC/C=G/C\cong R_0$. But in those cases $\Gamma_{nv}(G/C)$ is complete according to \cite[Corollary E]{ourself}, which is a final contradiction.   $\blacksquare$
%\end{proof}	

% implying that they are adjacent to $\theta$, unless $S_0\cong \PSL_2(3^f)$ for some integer $f$. First assume $k>1$. Then $\eta_0=St\times \dots\times St$ and $\eta_1=St\times\dots St \times 1$ both are invariant under $\Aut(S)^k$. So it is easy to see that $\Nv(\chi_0)\cap \Nv(\chi_1)$  is not empty, where $\chi_0\in \Irr(G|\eta_0)$ and  $\chi_0\in \Irr(G|\eta_0)$. Thus $\theta$ is adjacent to $\chi_0$ and $\chi_1$, which is impossible.  Thus $k=1$ and so $G/C\subseteq \Aut(\PSL_2(^f))$. Thus $G/R$ is abelian, while $G/C$ must have a normal subgroup $M/C$ such that $G/M$ is nonabelian solvable group, a contradiction. 

\bigskip

In the squeal, we establish some lemmas to facilitate a clearer understanding of proof of Theorem A.

\begin{lemma}\label{Frob}
	Let $G$ be a Frobenius group whose  Frobenius kernel is a $p$-group and the  Frobenius complement is a cyclic group. Assume that $\Gamma_{nv}(G)$ is null, then $G$ is a  strongly $\mathcal{H}_1'$-group. 
\end{lemma}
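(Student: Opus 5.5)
By Lemma \ref{thA}, if $G$ is not a strongly $\mathcal{H}_1'$-group we are in case 2). So there is $K\trianglelefteq G$ with $\Irr(G/K\,|\,G'K/K)=\{\theta\}$, $\Van(G)\cap G'K\neq\emptyset$, and $\Van(\chi)=\Van(G)$ for every $\chi\in\Irr(G|G')-\{\theta\}$. I will show that this configuration forces $\Van(G)\cap P=\emptyset$ and hence a contradiction, where $P$ is the Frobenius kernel and $H$ the cyclic complement.

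\textbf{Step 1: identify $K$ and $\theta$.} Since $G/P\cong H$ is abelian and $H$ acts fixed-point-freely, $G'=P$. Every normal subgroup of a Frobenius group lies in $P$ or contains $P$. If $K\supseteq P$, then $G/K$ is abelian and has no non-linear character, which is impossible. Hence $K\subset P$, and $G/K$ is a Frobenius group with kernel $P/K$, complement $H$, and exactly one non-linear character. By the classification of groups with a unique non-linear character, $P/K$ is elementary abelian of order $p^a$, $|H|=p^a-1$, and $H$ is regular on $(P/K)^\#$; in particular $P/K$ is a chief factor. Then $\theta=\lambda^G$ for some $1\neq\lambda\in\Irr(P/K)$. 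A direct computation gives $\theta(x)=-1$ for $x\in P-K$ and $\theta(x)=p^a-1$ for $x\in K$. Since every non-linear character of $G$ is induced from $P$, all of them vanish on $G-P$. Therefore $\Van(\theta)=G-P$. It remains to show $\Van(G)\cap P=\emptyset$, since then every non-linear character has vanishing set $G-P=\Van(G)$ and $G$ is strongly $\mathcal{H}_1'$.

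\textbf{Step 2: locate a vanishing element of $P$.} Suppose $g\in\Van(G)\cap P$. By the hypothesis of case 2), every $\chi\in\Irr(G|K)$ vanishes at $g$. By the second orthogonality relation, $|C_G(g)|=|C_{G/K}(gK)|$. If $g\in K$, the right-hand side is $p^a(p^a-1)$, which is not a $p$-power. This contradicts $C_G(g)\subseteq P$, which holds because $g$ is a nontrivial element of the kernel. Hence $g\in P-K$ and $|C_P(g)|=|C_G(g)|=p^a$. This is the computation of Lemma \ref{pp}, and I would instead cite that lemma after passing to a minimal normal subgroup under $K$ if necessary.

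\textbf{Step 3: spread the vanishing to all of $P-K$.} Since $P/K$ is abelian, $g^P\subseteq gK$, and $|g^P|=|P:C_P(g)|=|K|$. So $g^P=gK$, and the whole coset $gK$ consists of vanishing elements. Thus $(\Van(G)\cap P)K/K$ is a nonempty $H$-invariant subset of $(P/K)^\#$. Since $H$ is transitive there, $P-K\subseteq\Van(G)$.

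\textbf{Step 4: derive the contradiction.} Every $\chi\in\Irr(G|K)$ now vanishes on $P-K$ and on $G-P$, so it vanishes off $K$. This gives $\chi(1)^2|K|\ge\sum_{x\in K}|\chi(x)|^2=|G|$, that is, $\chi(1)^2\ge p^a(p^a-1)$. On the other hand, $\chi=\mu^G$ for some $\mu\in\Irr(P|K)$, so $\chi(1)=(p^a-1)\mu(1)$. Vanishing off $K$ makes $K$ behave like a "center" for $\chi_P$, which should force $\mu_K$ to be homogeneous and $\mu(1)^2=|P:K|=p^a$. Comparing $p$-parts, or the counting of Lemma \ref{orbit} type for $|C_P(g)|=p^a$ against $|\mathbf Z(P)|$, should then give a contradiction.

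This last step is the main obstacle. My first attempt would be to show that $K=\mathbf Z(P)$ and that $|P:\mathbf Z(P)|=p^a$ is an odd power of $p$, and then apply Lemma \ref{pgroup}. Failing that, I would argue directly with the degrees: $\chi(1)^2\,|K|=|G|$ forces $(p^a-1)^2\mu(1)^2|K|=p^a(p^a-1)|K|$, which is absurd unless $p^a-1$ divides $p^a$, i.e.\ $p^a=2$. But $p^a=2$ is excluded because then $H=1$.
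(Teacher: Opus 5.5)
Your Steps 1--3 are correct. Step 3, which spreads the vanishing to all of $P-K$, is more than is needed. The proof is not finished, though: Step 4 never produces a contradiction, and neither route you sketch can be completed as stated.

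\textbf{The degree route.} This route uses $\chi(1)^2|K|=|G|$, but you have only proved $\chi(1)^2|K|\ge |G|$. Equality would mean $|\chi(x)|=\chi(1)$ for all $x\in K$, i.e.\ $K\subseteq \mathbf{Z}(\chi)$. However, $G/\ker\chi$ is a Frobenius group, so $\mathbf{Z}(\chi)/\ker\chi=\mathbf{Z}(G/\ker\chi)=1$. Hence $\mathbf{Z}(\chi)=\ker\chi$, which does not contain $K$. So the inequality is always strict. The contradiction you extract from equality is illusory; the strict inequality only gives $(p^a-1)\mu(1)^2>p^a$, i.e.\ $\mu(1)>1$.

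\textbf{The Lemma~\ref{pgroup} route.} This route needs $K=\mathbf{Z}(P)$ and $a$ odd, and nothing in your setup forces either. Indeed, in a quotient where the bottom of $K$ is central in $P$, the vanishing-off-$K$ condition yields $\mu(1)^2=|P:K|=p^a$, i.e.\ $a$ even. So there is no evident way to reach that lemma.

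\textbf{The missing idea.} Use the complement $H$ once more, this time on a central piece of $K$. First, if $K=1$, then $\theta$ is the only non-linear character and $\Van(G)=\Van(\theta)=G-P$ immediately. So assume $K\neq 1$.
\begin{itemize}
\item $Z_0=\mathbf{Z}(P)\cap K$ is a non-trivial normal subgroup of $G$, and $H$ acts fixed-point-freely on it.
\item Hence $p^a-1=|H|$ divides $|Z_0|-1$, so $|Z_0|\ge p^a$.
\item For the element $g\in (P-K)\cap\Van(G)$ from your Step 2, $\langle g\rangle Z_0\subseteq C_P(g)$ and $g\notin Z_0$.
\item Therefore $p^a=|C_P(g)|\ge p\,|Z_0|\ge p^{a+1}$, a contradiction.
\end{itemize}
This is essentially how the paper concludes. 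It takes a chief factor $K/M$ of order $p^b$, which is central in $P/M$. From the Frobenius action it gets $p^a-1\mid p^b-1$, hence $a\mid b$. From the centralizer computation of Lemma~\ref{pp} (your Step 2) it gets $b<a$. You had all the ingredients but never supplied the lower bound on the central part of $K$ coming from the regular orbits of $H$.
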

\begin{proof}
	Assume that \( G \) is not classified as a strongly \( \mathcal{H}_1' \)-group.  Lemma \ref{thA}  indicates that there exists a unique character \( \theta \in \Irr(G|G') \) such that \( \Van(\theta) \subset  \Van(G) \). According to the hypothesis $G'$ is the Frobenius kernel of $G$ and it is a $p$-group. Let \( K \) be a normal subgroup of \( G \) contained within \( G' \), and $G'/K$  is a chief factor of $G$. Define \( \overline{G'} = G'/K \). 
	
	It is evident that the set \( G - G' \) is included within \( \Van(\chi) \) for all non-linear characters \( \chi \in \Irr(G) \), since \( G \) is a Frobenius group with \( G' \) as its  Frobenius kernel.
	
	Now, for any non-linear irreducible character \( \eta \) of \( \overline{G} = G/K \), we note that \( \overline{G} - \overline{G'} = \Van(\eta) \). Given that no character degree of \( \overline{G} \) is divisible by \( p \), it follows that \( \Van(\overline{G}) \cap \overline{G'} = \emptyset \), by  \cite[Corollary 2.2]{BDS}. It is important to remember that for any character \( \chi \in \Irr(G|G')-\{\theta\} \), we also have \( \Van(G) = \Van(\chi) \). Under this assumption, \( \Irr(\overline{G}|\overline{G'})=\{\theta\} \).  This leads us to conclude that \( |\overline{G}/\overline{G'}| = |\overline{G'}| - 1 = p^a - 1 \) for some integer \( a \), as referenced in \cite{seitz}. %Consequently, we establish that \( \theta(g) = -1 \) for every element \( g \) in \( \overline{G'} \). Indeed, applying the second orthogonality relation for all \( g \in \overline{G'} \), we find the equation 
	%%%%%%%%%%%%%%%%%%%%%%%%%
	%\[ \sum_{\chi\in \Irr(\overline{G}/\overline{G'})} \chi(1)\chi(g)+ \theta(1)\theta(g)=
	%|G:G'| + |G:G'|\theta(g) = 0,
	%\]
	%which leads to \( \theta(g) = -1 %\). 
	If \( K \) is trivial, then \( G \) would be a strongly \( \mathcal{H}_1' \)-group, concluding our argument. Thus, we will proceed under the assumption that \( K \) is non-trivial.
	
	Next, let \( M \) be a normal subgroup of \( G \) contained in \( K \), such that \( K/M \) is a chief factor of \( G \). We will  focus attention  to \( \tilde{G} = G/M \). Clearly, \( \tilde{K} \subseteq \mathbf{Z}(\tilde{G'}) \), implying that for all \(1\not= x\in \tilde{K} \), we have \( C_{\tilde{G}}(x) = \tilde{G'} \). 
	Assuming \( |\tilde{K}| = p^b \) for some integer \( b \), since for all non-trivial elements \( x \in \tilde{K} \) we have \( |x^{\tilde{G}}| = |G:G'| \), it follows that \( p^b - 1 = k(|G:G'|) = k(p^a - 1) \), for some integer $k$, indicating that \( b \) must be a multiple of \( a \).
	
	Note that  \( \tilde{G'} \cap \Van(\tilde{G})\) is not empty, as otherwise the vanishing set of all characters in $\Irr(\tilde G|\tilde{G'})$ matches vanishing set of $\theta$, which is a contradiction. Thus   all characters in \( \Irr(\tilde{G}|\tilde{K}) \) vanish on \( \tilde{G'} \cap \Van(\tilde{G})\). Hence we can apply Lemma \ref{pp} and consequently, we find that \( b < a \), leading to a contradiction, as $b$ is a multiple of $a$.
\end{proof}
\begin{lemma}\label{yash}
Let $G$ be a finite group. If $\Gamma_{nv}(G)$ is null, then
\begin{itemize}
\item[(i)] $G$ is a strongly $\mathcal{H}_1'$-group; or 
\item[(ii)] Every character degree in  $\Irr(G|G')$ is divided by a prime  $p$. In particular $ G $ is $ p $-nilpotent;
\end{itemize}
\end{lemma}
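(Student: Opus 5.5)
Assume $\Gamma_{nv}(G)$ is null and $G$ is not a strongly $\mathcal{H}_1'$-group. Lemma \ref{thA} then gives a normal subgroup $K$ with $\Irr(G/K|G'K/K)=\{\theta\}$, $\Van(G)\cap G'K\neq\emptyset$, and $\Van(\chi)=\Van(G)$ for every $\chi\in\Irr(G|G')-\{\theta\}$. The plan is to take an element $x\in \Van(G)\cap G'K$ and show that it is a $p$-singular element for a prime $p$ dividing every degree in $\Irr(G|G')$. The natural tool is \cite[Corollary 2.2]{BDS}, which is already used in the proof of Lemma \ref{Frob}: if a normal subgroup $N$ has order coprime to all character degrees (or, more generally, the relevant degrees), then $N$ contains no vanishing elements.

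First, the group $\overline{G}=G/K$ has exactly one nonlinear irreducible character, $\theta$. By the classification in \cite{seitz}, $\overline{G}$ is either an extraspecial $2$-group or a Frobenius group of order $p^a(p^a-1)$ with elementary abelian kernel $\overline{G'}$ and cyclic complement. In both cases $\theta(1)$ is a power of a prime $p$ and $\overline{G'}$ is a $p$-group. Next, $\theta$ does not vanish on $x$, since $x\in\Van(G)-\Van(\theta)$. So every $\chi\in\Irr(G|G')-\{\theta\}$ vanishes on $x$, while every linear character and $\theta$ do not. The second orthogonality relation then gives
\[|C_G(x)|=\sum_{\lambda\in\Irr(G/G')}|\lambda(x)|^2+|\theta(x)|^2=|G:G'|+|\theta(x)|^2,\]
with $x\in G'K$ (possibly after adjusting by $K$).

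The key step is to show that every $\chi\in\Irr(G|G')-\{\theta\}$ has degree divisible by $p$. Suppose some $\chi$ has $p\nmid\chi(1)$. I would derive a contradiction by taking the $p$-part of $x$ and using that $\chi$ vanishes on $x$ while $p\nmid \chi(1)$. The tool here is the standard fact that a character of $p'$-degree cannot vanish on $p$-elements in certain normal $p$-subgroups (\cite[Corollary 2.2]{BDS}), combined with the congruence $\chi(x)\equiv\chi(x_{p'})\pmod{\mathfrak p}$. Concretely, I would show that $x$ may be chosen to be a $p$-element: $xK$ lies in $\overline{G'}$, a $p$-group. Since $\theta$ also has $p$-power degree, all degrees of $\Irr(G|G')$ are then divisible by $p$. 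I expect the main obstacle to be this step, namely forcing $p\mid\chi(1)$ for every $\chi$, and in particular controlling the $p'$-part of $x$ when $K\neq1$.

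Finally, $p$-nilpotence follows from a standard criterion. Every character of $p'$-degree is linear, so the $p'$-degree characters are exactly those in $\Irr(G/G')$. By the Thompson-type theorem (if $p$ divides every nonlinear character degree then $G$ has a normal $p$-complement, \cite[Corollary 12.2]{Isaacs}), $G$ is $p$-nilpotent.
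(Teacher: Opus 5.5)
There is a genuine gap, and it comes from looking for the vanishing element inside $G'K$.

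First, your claim that in both cases of Seitz's classification $\theta(1)$ is a power of the prime $p$ for which $\overline{G'}$ is a $p$-group is false in the Frobenius case. There $\theta$ is induced from a nontrivial linear character of the kernel, so $\theta(1)=p^a-1$, which is prime to $p$ (e.g.\ $S_3$, $A_4$). With your choice of $p$ the conclusion therefore already fails for $\theta$ itself, and this is exactly the case that needs a separate argument.

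Second, the step you flag as the main obstacle is not carried out. That $xK\in\overline{G'}$ is a $p$-element does not make $x$ a $p$-element, and passing to $x_p$ may take you out of $\Van(G)$. The congruence $\chi(x)\equiv\chi(x_{p'})\pmod{\mathfrak p}$ says nothing about $\chi(1)$ unless $x_{p'}=1$. Moreover, $\theta$ does not vanish anywhere on $G'K$, so no element of $\Van(G)\cap G'K$ can yield $p\mid\theta(1)$ by this method.

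The missing idea is to take the vanishing $p$-element inside $\Van(\theta)$ instead. By \cite[Theorem B]{MNO}, there is a $p$-element $g$ with $\theta(g)=0$. Alternatively, argue directly: $\Van(\theta)=G-G'K$, and the $r$-part of any element whose image in $G/G'K$ has prime order $r$ still lies outside $G'K$. Then $g\in\Van(G)=\Van(\chi)$ for every $\chi\in\Irr(G|G')-\{\theta\}$. So every character in $\Irr(G|G')$ vanishes at the $p$-element $g$, hence $p\mid\chi(1)$ for all of them by $\chi(g)\equiv\chi(1)\pmod{\mathfrak p}$ (cf.\ \cite[Corollary 2.2]{BDS}). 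Then \cite[Corollary 12.2]{Isaacs} gives $p$-nilpotence. This is exactly the argument the paper uses for Lemma \ref{pnil}.

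The paper's proof of the present lemma instead takes an arbitrary $p$-element of $\Van(G)$. That only gives $p\mid\chi(1)$ for $\chi\neq\theta$, so the paper must then exclude $p\nmid\theta(1)$. In that case $\theta$ is the unique nonlinear $p'$-degree character, and \cite[Theorem A]{KB} makes $G/\ker\theta$ a Frobenius group of order $p^n(p^n-1)$. The second orthogonality relation and Lemma \ref{Frobenius} then make $G$ a Frobenius group with kernel $G'$. Next, \cite[Proposition 2.2]{KB} forces $G'$ to be a $p$-group, and Lemma \ref{Frob} shows $G$ is strongly $\mathcal{H}_1'$, a contradiction.
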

\begin{proof}
Suppose that $G$ is not a strongly $\mathcal{H}_1'$-group. Then by Lemma \ref{thA}, there exists only one  character $ \theta \in \Irr(G|G') $ such that $ \Van (\theta)\not= \Van(G) $. Let $ g\in \Van(G) $ be a $ p $-element for some prime $ p $. Then $ \chi(g)=0 $ for all non-linear $ \chi \in \Irr(G)-\{ \theta\} $. This means that $ p\mid \chi(1) $ for all non-linear $ \chi \in \Irr(G) -\{ \theta\} $, by \cite[Corollary 2.2]{BDS}. If $ \theta(g)=0 $, then using \cite[Corollary 12.2]{Isaacs}, $ G $ is $ p $-nilpotent and so (ii) holds. Assume that $ \theta(g)\not= 0 $ and that $ p\nmid \theta(1) $. Then $ G $ has exactly one non-linear irreducible character $ \theta $ of $ p' $-degree. Using \cite[Theorem A]{KB}, $\ker\theta\subseteq G'$ and $ G/\ker \theta $ is isomorphic to a Frobenius group with a Frobenius kernel of order $ p^{n} $ and a cyclic Frobenius complement of order $ p^{n}-1 $ for some integer $n$. Note that every non-linear irreducible character vanishes on all the non-trivial elements in $G-G'$. Hence, by the second orthogonality relation we have $|C_G(g)|=\sum_{\chi\in \Irr(G/G')}|\chi(g)|^2=|G:G'|$, for every $g\in G-G'$. Thus applying Lemma \ref{Frobenius},  $ G $ is a Frobenius group with Frobenius kernel $G'$. This means that $ G' $ is nilpotent. But by \cite[Proposition 2.2]{KB}, $ N_{G'}(P)=P $, where $ P $ is a Sylow $ p $-subgroup of $ G $. Hence $ P=G' $ and so  by Lemma \ref{Frob} we get that $G$ is a strongly $\mathcal{H}'_1$-group, a contradiction.
\end{proof}

{\bf The proof of Theorem A.}
	Let $G$ be a counterexample with minimal order. Then by Lemma \ref{yash},  for every $\chi\in \Irr(G|G')$, $p$ divides $ \chi(1)$ and so $G$ is a $p$-nilpotent group, for some prime $p$. Since $G$ is not a strongly $\mathcal{H}_1'$-group, there exists a character $\theta\in \Irr(G|G')$ such that $\Van(\theta) \subset \Van(G)$ and for every other non-linear irreducible character $\chi$, we  have that $\Van(G) = \Van(\chi)$, by Lemma \ref{thA}. We will prove the theorem by following  steps:
	\smallskip
	
	{\bf Step 1.} If $h\in \Van(G)-G'$, then $h\in \Van(\chi)$ for all $\chi\in \Irr(G|G')$. 
	
	\smallskip
	
	Assume otherwise. Then  $\theta(h)\not =0$. Thus for every $\zeta\in \Irr(G/G')$ we conclude that $\Van(\zeta\theta)=\Van(\theta)$, which is a contradiction, as $\theta\not =\zeta\theta$ for some $\zeta\not =1$, by Lemma \ref{newchar}. 
	
	\smallskip
	
	{\bf Step 2.}  $G$ is not a $p$-group. 
	
	 \smallskip
	
	On the contrary assume $G$ is a $p$-group and $K$ is an arbitrary  minimal normal subgroup of $G$.  Then either $G/K$ is abelian or as $\Gamma_{nv}(G/K)$ is also null and $|G/K|<|G|$ we have that $G/K$ is a strongly $\mathcal{H}_1'$-group.

	 Assume the second case occurs. Firstly, let   $ \theta\not \in \Irr(G/K)$. Then all characters of $G/K$ are vanishing on $\Van(G)=\Van(G/K)=G/K-{\bf Z}(G/K)$. Moreover, according to Lemma \ref{ThmB}, $G/K/{\bf Z}(G/K)$ is an elementary abelian group. So $KG'/K\subseteq {\bf Z}(G/K)$.  
	 Hence there exists $h\in (G-G')\cap \Van(G)$ such that $\theta(h)\not =0$, a contradiction by Step 1. Thus $G/K$ has exactly one non-linear irreducible character $\theta$ (as it is a strongly $\mathcal{H}_1'$-group).   So by the main result of  \cite{seitz}, we get that $G/K$ is an extraspecial $2$-group and so $|G/K|=2^{2n+1}$ for some integer $n$. Thus $\Van(\theta)=G/K-Z/K$, where	
	 $Z/K={\bf Z}(G/K)$. As $K\subseteq {\bf Z}(G)$ and $\Van(G)=G-{\bf Z}(G)$,  there exists $h\in Z-K$ such that all  non-linear characters, except for $\theta$, are vanishing on it. As $|Z/K|=2$, we get that $K={\bf Z}(G)$ and so $\Van(G)=G-K$. Hence we get that all  non-linear characters in  $\Irr(G|K)$, are vanishing on $G-K$ and by Lemma \ref{pgroup} we get a contradiction.

	 Assuming that \( G/K \) is abelian, we can conclude that \( G' = K \). Furthermore,  \( K = G' \subseteq \mathbf{Z}(G) \),  is the unique minimal normal subgroup of \( G \). According to \cite[Theorem 12.3]{Isaacs}, we find that \( \text{Van}(G) = \text{Van}(\chi) = G - G' \) for all \( \chi \in \text{Irr}(G | G') \). This implies that $G$ is  a strongly \( \mathcal{H}_1' \)-group, a contradiction.

\smallskip

 {\bf Step 3.} The Sylow $p$-subgroups of $G$ are not abelian. 
 
 \smallskip
 
Assume, on the contrary, that \( P \in \Syl_p(G) \) is abelian. Let \( K \) represent the group outlined in part (2) of Lemma \ref{thA}. We note that \( \Irr(G/K|G'K/K) = \{\theta\} \), \( \Van(G) \cap G'K \neq \emptyset \), and for all non-linear characters \( \chi \in \Irr(G|K) \), it holds that \( \Van(G) = \Van(\chi) \).  
Consequently, this leads us to conclude that either \( G/K \) is a Frobenius group or it is an extraspecial \( 2 \)-group, by \cite{seitz}. If the latter is the case, then \( G \) possesses an irreducible  character of degree \( 2^n \) for some integer \( n \), which indicates that \( p = 2 \), as all non-linear character degrees of $G$ are divided by $p$. Therefore, the Sylow \( 2 \)-subgroup cannot be abelian, as required.

Thus, we proceed with the assumption that \( G/K \) is a Frobenius group of order \( r^a(r^a-1) \) with a cyclic Frobenius  complement, for some prime \( r \) and integer \( a \). Observe that \( G'K = G' \). If not, we could find a non-trivial linear character \( \lambda \in \Irr(G'K/G') \), with \( \lambda\theta\not =\theta \), by Lemma \ref{newchar},  as a non-linear irreducible  character of \( G \) with \( \Van(\theta) = \Van(\theta\lambda) \), a contradiction. Hence, \( G'K = G' \) and so $\Van(\theta)=G-G'K=G-G'\subseteq \Van(G)$.

By employing the second orthogonality relation, we obtain for all \( x \in G - G' \) that \( |G/G'| = \sum_{\chi \in \Irr(G/G')} |\chi(x)|^2 = |C_G(x)| \). Thus applying Lemme \ref{Frobenius}, we get that $G$ is a Frobenius group with Frobenius kernel $G'$. Then $G'$ is nilpotent. Given that $G$ is $p$-nilpotent, we find that,  $G/G' \cong P$, where $P \in \Syl_p(G)$, indicating that $P$ is cyclic.   Referring to Theorem \ref{Frob}, if we can establish that $G'$ is a $q$-group for some prime $q$, we will arrive at a contradiction. 

%Next, we establish that \( (|G'|, |G/G'|) = 1 \). Suppose a prime \( q \) divides both \( |G'| \) and \( |G/G'| \). Let \( Q \) be a Sylow \( q \)-subgroup of \( G \). The intersection \( Q \cap G' \) is a normal subgroup of \( Q \), and thus \( Q \cap G' \cap \mathbf{Z}(Q) \neq 1 \). Taking some \( h \in Q - G' \), we find \( 1 \neq Q \cap G' \cap \mathbf{Z}(Q) \subset C_G(h) \). This results is a contradiction since \( |QG'/G'| \cdot |Q \cap G' \cap \mathbf{Z}(Q)| \) must divide \( |C_G(h)| = |G/G'| \), given that \( QG'/G' \) is cyclic.
 
% this implies that $|C_G(h^{O(h)/q})|=|G/L|$ and so  $(|G/L|,|L|)=1$

% Therefore, we can deduce that $G/G'$ acts Frobeniusly on $G'$, which leads us to conclude that $G'$ is nilpotent.

 Assume the contrary. Then $G'=H_0 \times H_1$, where $H_0$ and $H_1$ are distinct Hall subgroups of $G'$. We can take normal subgroups $T_0 \subseteq H_0$ and $T_1 \subseteq H_1$ within $G$ such that $G'/T_iH_j = H_iH_j/T_iH_j$ serves as chief factors of $G$, for the pairs $(i,j) \in \{(0,1), (1,0)\}$. Consequently, for  non-linear character $\chi_i \in \Irr(G/T_iH_j)$, with $(i,j) \in \{(0,1), (1,0)\}$, we find that $\Van(\chi_i) = G - G' = \Van(\theta)$ holds true. This establishes a contradiction. Therefore, $G'$ cannot have more than one Hall subgroup, which confirms that $G'$ must indeed be a $q$-group for some prime $q$, as required. Thus, by Lemma \ref{Frob}, we arrive at a contradiction.

 % Note that all  non-linear characters in  $\Irr(G/O_{t'}(G'))-\{\theta\}$  are vanishing on  $G-G'\subset \Van(G)$, where $t$ is an arbitrary prime divisor of $|G'|$. If $t$ and $q$ are two distinct prime divisors of $|G'|$ and $\chi_t \in \Irr(G/O_{t'}(G'))-\{\theta\} $   and $\chi_q \in \Irr(G/O_{q'}(G'))-\{\theta\} $  
 %are non-linear, then $\Van(G)=\Van(\chi_q)=\Van(\chi_t)$. Note that $\Van(\theta)=G-G'\subset \Van(G)$.  Hence there exists $x\in G'$ such that $\chi_t(x)=\chi_q(x)=0$. Thus we may assume there exists $q$-element $x_q$,  $t$-element $x_t$ and $\{q,t\}'$-element $x_{\{q,t\}'}$ such that  $x=x_qx_tx_{\{q,t\}'}$.  As $\chi_t(x)=0$ we get that $\chi_t(x_t)=0$, implying that $\chi_q(x_t)=0$, a contradiction. Thus  $|G'|$ is a $q$-group for some prime $q$. Now using theorem \ref{Frob}, we get the final contradiction of this step. So without loss of generality we may assume $\Irr(G/O_{q'}(G')| G'/O_{q'}(G'))=\{\theta\}$ and so $O_{q'}(G')=K$. Then $G'=K\times O_q(G)$. Assume $T\subseteq O_{q'}(G')$ be a normal subgroup of $G$ such that  $O_{q'}(G')/T$ is a chief factor of $G$. Then  $\Irr(G/O_{q}T)$ contains irreducible characters .  
  \smallskip
 
 {\bf Step 4.} $G$ is a Frobenius group  whose complement is isomorphic to $Q_8$ and the kernel is abelian.
 
 \smallskip 
 
Let \( P \) denote the Sylow \( p \)-subgroup of \( G \). From Step 3 and the fact that $|P|<|G|$, we know that \( P \cong G/O_{p'}(G) \) is categorized as a strongly \( \mathcal{H}_1' \)-group. According to Lemma \ref{ThmB}, we have:
\[
\Van(G/O_{p'}(G)) = G/O_{P'}(G) - \mathbf{Z}(G/O_{p'}(G)) \subseteq G/O_{P'}(G) - G'O_{p'}(G)/O_{p'}(G).
\]
Assuming that \( \theta \not\in \Irr(G/O_{p'}(G)) \), there must exist an element \( h \in G - G'O_{p'}(G) \subseteq G - G' \) such that \( \theta(h) \neq 0 \), which contradicts our earlier findings in Step 1. Therefore, we can assert that \( P \cong G/O_{p'}(G) \) has precisely one non-linear irreducible character \( \theta \).

This leads us to conclude that \( P \) is an extraspecial \( 2 \)-group, as indicated in \cite{seitz}. Moreover, we find that
\[
\Van(\theta)  = G/O_{p'}(G) - \mathbf{Z}(G/O_{p'}(G)).
\]
Let \( Z/O_{p'}(G) = \mathbf{Z}(G/O_{p'}(G)) = P'O_{p'}(G)/O_{p'}(G) \cong P' \). It is clear that $P'$ is a  cyclic group of order $2$. We aim to establish that \( Z = G' \). It is evident that \( G' \subseteq Z \). If $Z\not =G'$, then  for some  \( \lambda \in \Irr(Z/G') \), we have $\lambda\theta\not =\theta$, by Lemma \ref{newchar}, while  \( \Van(\lambda\theta) = \Van(\theta) \),  a contradiction. Thus  we conclude \( Z = G' \).  It follows that \( G/G' \cong P/P' \) with \( G - G' \subseteq \Van(G) \).

Next, we assert that \( G' - O_{p'}(G) \subseteq \Van(G) \). Suppose, for the sake of contradiction, that this is not true. Let \( K \) be a normal subgroup of \( G \) such that \( O_{p'}(G)/K \) serves as a chief factor of \( G \). Clearly, the orders satisfy \( (|G'/O_{p'}(G)|, |O_{p'}(G)/K|) = 1 \).

If we have that \( C_{G'/K}(O_{p'}(G)/K) = O_{p'}(G)/K \), then, by Lemma 2.9 in  \cite{silvio1}, it follows that \( G' - O_{p'}(G) \subseteq \Van(G) \), leading to a contradiction. Therefore, we must have:
\[
C_{G'/K}(O_{p'}(G)/K) = G'/K.
\]
Consequently,
 \( G'/K \cong P_2/K \times O_{p'}(G)/K \), where \( P_2/K \cong G'/O_{p'}(G) \) is a
  cyclic group of order \( 2 \). Since \( \theta \notin \Irr(G/P_2) \) ($P_2\not \subseteq O_{p'}(G)=\ker\theta$),	
	 we can conclude that all non-linear irreducible characters of \( G/P_2 \) vanish on \( \Van(G) \). It is also important to note that \( G/P_2 \) is not abelian, as $P_2\subset G'$.
Now, taking \( \overline{G} = G/P_2 \), we have \( \Van(G) = \Van(\overline{G}) = \Van(\chi) \) for all non-linear irreducible characters $\chi$ of \( \overline{G} \). Given that \( G - G' \subseteq \Van(G) \), we find that \(\overline{G}-  \overline{G'} \subseteq \Van(\overline{G}) \). Consequently, for any \( x \in \overline{G} - \overline{G'} \), it holds that:
\[
|C_{\overline{G}}(x)|=\sum_{\chi \in \Irr(\overline{G}/\overline{G'})} |\chi(x)|^2=|\overline{G} : \overline{G'}|.
\]
This indicates that \( \overline{G} \) is a Frobenius group, and its Frobenius complement is isomorphic to \( G/G' \cong \overline{G}/\overline{G'} \cong P/\mathbf{Z}(P) = P/P' \). However, this situation is untenable since \( \overline{G} \) is a \( 2 \)-nilpotent Frobenius group with a Frobenius complement that is an elementary abelian \( 2 \)-group of order at least \( 4 \). 
Thus, we have validated our claim and \( G' - O_{p'}(G) \subseteq \Van(G) \), implying that \( G - O_{p'}(G) \subseteq \Van(G) \). Now applying Lemma \ref{extraspecial}, we get that $G$ is a Frobenius group whose Frobenius complements are $Q_8$ and the Frobenius kernel is abelian. $\blacksquare$

 	\smallskip
 	
    In the following  we initially  prove some  lemmas and theorems that lead us to prove Theorems B and D. 
   
   \begin{lemma}\label{quaterniun}
   	Let $G$ be a finite group such that $\Gamma_{nv}(G)$ has exactly one edge $\{\theta_1, \theta_2\}$. Then there is no normal subgroup   $N$ of  $G$ such that $G/N\cong C_3^2:Q_8$ and $\{\theta_1, \theta_2\}\subseteq \Irr(G/N)$.   
   \end{lemma}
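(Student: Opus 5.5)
Let $\bar G=G/N\cong C_3^2:Q_8$, let $V$ be the Frobenius kernel $C_3^2$, and let $M\supseteq N$ be the preimage of $V$, so $|G:M|=8$. I would first list $\Irr(\bar G)$. There are the four linear characters of $Q_8/Q_8'$, the unique degree-$2$ character $\chi$ inflated from $Q_8$, and the character $\psi=\mu^{\bar G}$ of degree $8$. The last one comes from the fact that $Q_8$ permutes the eight non-trivial characters $\mu$ of $V$ regularly. So $\chi$ and $\psi$ are the only non-linear characters of $\bar G$, and $\{\theta_1,\theta_2\}=\{\chi,\psi\}$. The values we need are:
\begin{itemize}
\item $\psi$ vanishes off $M$, and $\psi=-1$ on $M-N$;
\item $\chi=2$ on $M$ and $\chi=-2$ on the coset of $M$ over the central involution $z$ of $Q_8$;
\item $\chi$ vanishes on the preimages of the elements of order $4$ of $Q_8$.
\end{itemize}
Hence $\chi\psi$ vanishes on $G-M$. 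Since $\chi$ and $\psi$ are adjacent, there must be some $g\in\Van(G)\cap M$, and for it $\chi(g)\psi(g)\neq 0$. I would fix such a $g$ and aim for a contradiction.

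Next I use that every other non-linear character $\varphi$ of $G$ is isolated. First, $\varphi$ is not adjacent to $\psi$, and $\psi(g)\neq 0$, so $\varphi(g)=0$. Second, let $\lambda\in\Irr(G/G')$. Then $\lambda\chi$ is non-linear of degree $2$, and $\lambda\chi(g)\psi(g)\neq0$. If $\lambda\chi\ne\chi$, then $\lambda\chi$ would be a third endpoint adjacent to $\psi$, which is impossible. So $\lambda\chi=\chi$ for all such $\lambda$, which forces $\lambda$ to be trivial wherever $\chi\neq0$. In particular $\lambda$ is trivial on the preimage of $\langle z\rangle V$, a subgroup of index $4$. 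Since $G/G'$ already maps onto $Q_8/Q_8'$, this gives $|G:G'|=4$, so $G'$ is exactly the preimage of $\langle z\rangle V$ and $\Irr(G/G')=\Irr(\bar G/\bar G')$.

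Now I apply the second orthogonality relation at $g$. The only irreducible characters non-vanishing at $g$ are the four linear ones, $\chi$ and $\psi$, so
\[
|C_G(g)|=4+|\chi(g)|^2+|\psi(g)|^2=8+|\psi(g)|^2 .
\]
There are two cases.
\begin{itemize}
\item If $g\in N$, then $|C_G(g)|=8+64=72=|G:N|$.
\item If $g\in M-N$, then $|C_G(g)|=9=|C_{\bar G}(gN)|$. In this case $C_G(g)\cap N=1$ and $C_G(g)$ maps isomorphically onto $V$. Since $g\in C_G(g)$, $g$ is a $3$-element whose image in $\bar G$ is non-trivial.
\end{itemize}

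The main obstacle is turning these centralizer orders into a contradiction; the combinatorics above is routine. In the case $g\in M-N$ I would use the fact that $g$ is vanishing in $G$: some $\varphi\in\Irr(G|N)$ has $\varphi(g)=0$. At the same time $g$ is a $3$-element with $C_N(g)=1$. If $3\nmid|N|$, then \cite[Corollary 2.2]{BDS}, or a coprime-action argument on $N\langle g\rangle$, should show that $g$ cannot be a zero of any character lying over $N$. If $3\mid |N|$, then $C_N(g)=1$ is impossible for a $3$-element normalizing $N$, since a $3$-group acting on a group of order divisible by $3$ fixes a non-trivial element.

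In the case $g\in N$, the centralizer $C_G(g)$ has order $|G:N|$ and contains $g\in N$, so $C_G(g)N/N$ is a proper subgroup of $\bar G$. I would compare this with the orthogonality computation at elements $gv$ for $v\in M$, or replace $g$ by a suitable $p$-part using \cite{MNO}, so as to reduce to the previous case or to a coprime situation. I expect this case to need the most care.
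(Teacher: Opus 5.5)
\textbf{Verdict: there is a genuine gap.} The first half of your argument is correct and runs parallel to the paper: you identify $\chi,\psi$, prove $G'N=G'$ (your degree argument works), show every other non-linear character vanishes at $g$, and do the orthogonality count. What fails is the final contradiction, which your argument never actually produces.

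\textbf{Where it breaks.} In the case $g\in M-N$ there are two problems.
\begin{itemize}
\item The equality $|C_G(g)|=|C_{\bar G}(gN)|$ does not give $C_N(g)=1$; it only says $gN\subseteq g^G$. For example, take $Q_8$ with $N=\mathbf{Z}(Q_8)$ and $g$ of order $4$. You do get $C_N(g)=1$ when $3\nmid|N|$, since $C_G(g)$ is then a $3$-group. When $3\mid|N|$, however, your subcase rests on an unproved claim.
\item More seriously, for $3\nmid|N|$ the implication you invoke runs the wrong way. If $C_N(g)=1$, then $n\mapsto n^{-1}gn$ is a bijection $N\to gN$. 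Hence $\varphi(g)=|N|^{-1}\sum_{n\in N}\varphi(gn)=0$ for every $\varphi\in\Irr(G|N)$, which is exactly what your orthogonality count already told you. So fixed-point-free action makes $g$ a zero of every character over $N$, not of none. \cite[Corollary 2.2]{BDS} then only gives $3\mid\varphi(1)$ for these $\varphi$, and that is not a contradiction on its own.
\end{itemize}
The case $g\in N$ you leave open entirely.

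\textbf{The missing idea.} Go one step below $N$: take a chief factor $N/T$ of $G$ and prove $M/T\subseteq\Nv(G/T)$. The paper proceeds as follows.
\begin{itemize}
\item It first shows that every character of $\Irr(G|G')$ vanishes on $G-G'$. Otherwise some $\eta$ would be adjacent to some $\lambda\eta\neq\eta$, by Lemma \ref{newchar}. It follows that $G$ is $2$-nilpotent.
\item If $N/T$ is not a $2$-group, apply Lemma \ref{extraspecial} to $G/T$, with $H=M/T$ and $x$ in the coset of the central involution. This shows that $Q_8$ acts Frobeniusly on $M/T$ and that $M/T$ is abelian. So $M/T$ is an abelian normal Hall subgroup, and \cite[Corollary 2.2]{BDS} puts it in $\Nv(G/T)$.
\item If $N/T$ is a $2$-group, then $2$-nilpotency makes it central, and Lemma \ref{pq} gives the same conclusion.
\item Since $G/T$ has at least three non-linear characters, any element of $\Van(G)\cap M$ would create a triangle. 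Hence $\Van(G)\cap M=\emptyset$, so $\Van(\psi)=G-M=\Van(G)$ and $\psi$ is isolated.
\end{itemize}
In your notation: $gT$ is non-vanishing in $G/T$, yet the nonempty set $\Irr(G/T|N/T)\subseteq\Irr(G|N)$ vanishes at $g$. This disposes of both your cases ($g\in N$ and $g\in M-N$) at once, with no centralizer or coprime-action analysis needed.
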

\begin{proof}
On the contrary assume $N$ is a normal subgroup of $G$ such that $G/N\cong C_3^2:Q_8$ and $\{\theta_1,\theta_2\} \subseteq \Irr(G/N)$. We may assume that $\theta_1(1)=2$ and $\theta_2(1)=8$. In this case $\Van(\theta_1)=G-G'N$ and $\Van(\theta_2)=G-K$, where $K/N$ is the Sylow $3$-subgroup of $G/N$.    Moreover all characters  $\eta\in \Irr(G|G')-\{\theta_1, \theta_2\}$ are vanishing on $\Van(G)\cap G'N$, as otherwise we have another edge started from $\theta_1$ to some irreducible characters besides $\theta_2$.

To continue, we need to establish that \( G'N = G' \). If not, for each \( \lambda \in \Irr(G'N/G') \), we see that \( \Van(\lambda\theta_1) = \Van(\theta_1) \) while \( \lambda\theta_1 \neq \theta_ 1\) for  some \( \lambda \neq 1 \), by Lemma \ref{newchar}. This implies that \( \theta_1 \) would be adjacent to \( \lambda\theta_1 \), leading us to the conclusion that \( \lambda\theta_1 = \theta_2 \), which is a contradiction since \( \Van(\theta_1) \neq \Van(\theta_2) \). Hence, we confirm that \( G'N = G' \).

Take \( x \in   G - G'\subseteq \Van(G)\).  There is no character $\eta\in \Irr(G|G')-\{\theta_1, \theta_2\}$ such that $\eta(x)\not =0$, as otherwise $\lambda\eta(x)\not =0$ for all $\lambda\in \Irr(G/G')$. As $\lambda\eta\not =\eta$ for some $\lambda\in \Irr(G/G')$, by Lemma \ref{newchar}, we get that $\eta$ would be adjacent to $\lambda\eta$ for some $\lambda\in \Irr(G/G')$ implying a contradiction. Consequently, all characters in \( \Irr(G|G') \) must vanish on \( G - G' \).  So $\Van(\eta)=\Van(G)$ for all $\eta\in \Irr(G|G')-\{\theta_1,\theta_2\}.$ Moreover, $G$ is a $2$-nilpotent group by  \cite[Corollary 2.2]{BDS}.

%%%%%%%%%%%%%%%%%%%%%%% 

%There can be at most one non-linear irreducible character \( \eta \in \Irr(G/T) \) for which \( \eta(x) \neq 0 \); if there were more, it would imply two edges in \( \Gamma_{nv}(G) \), which is impossible. Moreover, applying the second orthogonality relation suggests that having such a non-linear irreducible character is not feasible. It derives from \( \sum_{\lambda \in \Irr(G/G')} \lambda(x) + \eta(x)\eta(1) = 0 \), leading to \( \eta(x) = 0 \).

  Note that $N$ is not trivial as otherwise $\Gamma_{nv}(G)$ is null. Let's consider \( N/T \) as a chief factor of \( G \). Therefore,  \( \Irr(G/T) \) consists of at least three non-linear irreducible characters.
%%%%%%%%%%%%%%%%%%%%%%%%%%%%%
 If  \( N/T \) is not a  \( 2\)-group, then by Lemma \ref{extraspecial} we would then demonstrate that \( G/K \), the Sylow $2$-subgroup of $G/T$,  acts Frobeniusly on \( K/T \) and $K/T$ is abelian. Thus $K/T\subseteq \Nv(G/T)$,  applying  \cite[Corollary 2.2]{BDS}. If  $N/T$ is a $2$-group. As $G$ and $G/T$ is $2$-nilpotent, we have that $N/T\subseteq {\bf Z}(G/T)$. Thus applying Lemma \ref{pq}, we have $K/T\subseteq \Nv(G/T)$. Hence in both cases, we conclude  that $K\subseteq \Nv(G)$ (as otherwise  $\Irr(G/T|G'/T)$ forms a complete subgraph of $\Gamma_{nv}(G)$ with  at least three vertices). As $\Van(\theta_2)=G-K$
 we get that $\theta_2$ is an isolated vertex of $\Gamma_{nv}(G)$, a contradiction. 
 %%%%%%%%%%%%%%%%%%%%%%%%%%%%%%%%
  \end{proof}
 %%%%%%%%%%%%%%%%%%%%%%%%%%%%
  %This contradicts our earlier statement that \( 9=|C_{G/T}(gT)| =|K/T| \) for each \( g \in \Van(G) \cap (K-N) \). Therefore, we must assume that \( N/T \) is not a \( 3 \)-group.

%%%%%%%%%%%%%%%%%%%%%%%%%%%%%% 
% Recall that  $K/N$ is  the Sylow 3-subgroup of $G/N$.  Consider \( g \in \Van(G) \cap (K-N) \). By the second orthogonality relation, we find that \( |C_{G/T}(gT)| = |G/T : G'T/T| + |\theta_1(gT)|^2 + |\theta_2(gT)|^2 = |G : G'| + |\theta_1(gT)|^2 + |\theta_2(gT)|^2 = 4 + (2)^2 + (-1)^2 = 9 \).
%%%%%%%%%%%%%%%%%%%%%%%%%%%%%%%%%%%%%

% If \( x \in G - G' \), all non-linear irreducible characters vanish on \( x \), implying that \( |C_{G/T}(xT)| = |G : G'| = 4 \) and if \( x \in G' - K \), all non-linear irreducible characters aside from \( \theta_1 \)  vanish on \( x \), leading to \( |C_{G/T}(xT)| = |G : G'| + |\theta_1(xT)|^2 = 4 + 4 = 8 \).

%Thus, \( G/K \) which is the Sylow \( 2 \)-subgroup of \( G/T \) acts Frobeniusly on \( K/T \). 

%Since \( |G/K| \) is even, we conclude that \( K/T \) must be abelian. 

%Consequently, given that \( |C_{G/T}(gT)| = 9 \) for each \( g \in \Van(G) \cap (K-N) \), it follows that \( C_{K/T}(N/T) \subseteq N/T \), by applying \cite[Lemma 2.9]{silvio1}. So  we conclude that \( K - N \subseteq \Van(G) \). Therefore, for any \( g \in K - N \), we find \( |C_{G/T}(gT)| = 9 \), implying that \( K/N \) acts Frobeniusly on \( N/T \), a contradiction since \( K/N\cong C_3^2 \) is not cyclic.	

   \begin{theorem}\label{triple}
   	Let $G$ be a finite group and $M $ be a normal subgroup of $G$ such that $G/M$ has exactly one non-linear irreducible character $\chi$. Assume $\chi$ is not an isolated vertex in $\Gamma_{nv}(G)$.  Then one of the following occurs:
   	
(a) $\Gamma_{nv}(G)$ has exactly one edge $\{\chi, \lambda\chi\}$ for some $1\not= \lambda\in \Irr(G/G')$ and for all $\theta\in \Irr(G|G')-\{\chi, \lambda\chi\}$, $\Van(\theta)=\Van(G)$. 
   	
   	b) $\Gamma_{nv}(G)$ has a triangle;  or
   	
   	c) $\Gamma_{nv}(G)$ contains two disjoint edges. 
   \end{theorem}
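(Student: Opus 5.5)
We argue by contradiction: assume $\Gamma_{nv}(G)$ has no triangle and no two disjoint edges, and derive (a). Under this assumption the edge set of $\Gamma_{nv}(G)$ is either a star (all edges share a common vertex) or a single edge, since a triangle-free graph with pairwise intersecting edges is a star.

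\textbf{Structure of $G/M$.} By \cite{seitz}, $G/M$ is either an extraspecial $2$-group or a Frobenius group of order $r^a(r^a-1)$ with elementary abelian kernel and cyclic complement. In both cases $\Van(\chi)=G-Z$ for some normal subgroup $Z\supseteq G'M$, where $Z/M$ is the centre in the first case and the kernel in the second. Since $\chi$ is not isolated, there is $g\in\Van(G)$ with $\chi(g)\neq0$, and necessarily $g\in Z$, so $\Van(G)\cap Z\neq\emptyset$.

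\textbf{Step 1: $Z=G'$.} Suppose $Z\neq G'$. Applying Lemma \ref{newchar} to a linear character of $Z/G'$ gives $\lambda\in\Irr(G/G')$ with $\lambda\chi\neq\chi$ and $\Van(\lambda\chi)=\Van(\chi)$. If $Z/G'$ has an element of order greater than $2$, the second part of Lemma \ref{newchar} yields three distinct characters $\chi,\lambda_1\chi,\lambda_2\chi$ that all vanish exactly on $G-Z$. Since $g\in Z$ is nonvanishing for all three, they form a triangle, a contradiction. So $\chi$ has at least one neighbour of the form $\lambda\chi$ with $\Van(\lambda\chi)=\Van(\chi)$. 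Moreover, the character $\lambda\chi$ is adjacent to every character that $\chi$ is adjacent to, unless the witnessing vanishing elements differ; this is what we must control.

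\textbf{Step 2: excluding other edges.} Take any edge $\{\eta,\psi\}$ with a witness $x\in\Van(G)$. If $x\notin G'$, Step 1 of the proof of Theorem A applies: $\eta(x)\neq0$ forces $\mu\eta(x)\neq0$ for every $\mu\in\Irr(G/G')$. Then $\eta,\mu\eta,\psi$, together with the corresponding twisted copies, produce either a triangle (if $\mu\eta\neq\psi$, all three are pairwise nonzero at $x$) or a second disjoint edge $\{\mu\eta,\mu\psi\}$. So every witness lies in $G'$.

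Now consider the neighbours of $\chi$. If $\chi$ is adjacent to some $\theta$ that is not of the form $\lambda\chi$, there are two cases.
\begin{itemize}
\item If $\theta(g)\neq0$ at a witness $g\in Z$ where $\chi(g)\neq0$, then $\chi,\lambda\chi,\theta$ form a triangle whenever $\lambda\chi$ exists with $\Van(\lambda\chi)=\Van(\chi)$.
\item If $Z=G'$, the linear twists of $\chi$ by characters trivial on $G'$ need not preserve $\Van(\chi)$. In this case we instead use that $G-G'\subseteq\Van(\chi)$ and reduce to a single edge.
\end{itemize}
With the star shape forced, and the centre of the star being $\chi$ or a twist of it, we then show that every $\theta\in\Irr(G|G')$ other than the two endpoints vanishes on all of $\Van(G)$. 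If some such $\theta$ were nonzero at some $y\in\Van(G)$, then $\theta$ and $\mu\theta$ (which differ by Lemma \ref{newchar} when $y\notin G'$), or $\theta$ and the star centre (when $y\in G'\cap\Van(G)$), give a second edge disjoint from or forming a triangle with the existing one.

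\textbf{Main obstacle.} The hardest step is showing that the unique edge has the form $\{\chi,\lambda\chi\}$ rather than $\{\chi,\theta\}$ with $\theta$ unrelated to $\chi$. The plan is to use the second orthogonality relation at $g\in\Van(G)\cap Z$, where all characters other than those in $\Irr(G/G')\cup\{\chi,\theta\}$ vanish. When $G/M$ is Frobenius this gives $|C_G(g)|=|G:G'|+|\chi(g)|^2+|\theta(g)|^2$. Combining this with Lemma \ref{Frobenius} and with $|C_G(x)|=|G:G'|$ for $x\in G-G'$ makes $G$ Frobenius with kernel $G'$. Lemmas \ref{pp} and \ref{Frob} then give a contradiction, except in configurations like $C_3^2:Q_8$, which Lemma \ref{quaterniun} rules out. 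I expect the extraspecial case to be handled by Lemma \ref{pgroup} together with Lemma \ref{extraspecial}.
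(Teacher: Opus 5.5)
You have a genuine gap. The case $G'M\neq G'$ is within reach of your Step 1, but the case $G'M=G'$, where the theorem has its content, is not handled.

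\textbf{The case $G'M\neq G'$.} Since $\Van(\chi)=G-G'M$ and $\chi$ is nonzero on all of $G'M$, a linear $\lambda$ satisfies $\lambda\chi=\chi$ exactly when $G'M\subseteq\ker\lambda$. So $\chi$ has precisely $|G'M:G'|$ distinct twists, and all of them are nonzero at the witness of $\chi$'s edge, which lies in $G'M$. Hence $|G'M:G'|\ge 3$ already gives a triangle, whether or not $G'M/G'$ has an element of order $>2$. The case $|G'M:G'|=2$, which you leave as ``what we must control'', gives (a) at once. Any $\theta\notin\{\chi,\lambda\chi\}$ that is nonzero at some $y\in\Van(G)$ forms a triangle with $\chi,\lambda\chi$ if $y\in G'M$. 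If $y\notin G'M$, it gives an edge $\{\theta,\mu\theta\}$ disjoint from $\{\chi,\lambda\chi\}$.

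On the other hand, your Step 2 claim that every witness lies in $G'$ does not follow. If $\eta$ has exactly two distinct twists and $\psi=\mu\eta$, then your ``second disjoint edge'' $\{\mu\eta,\mu\psi\}$ is the original edge, and no triangle arises. This is exactly the shape of the edge in (a).

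\textbf{The case $G'M=G'$.} Here every linear twist of $\chi$ equals $\chi$. Your remark that twists ``need not preserve $\Van(\chi)$'' is backwards: twists always preserve vanishing sets, and here they simply coincide with $\chi$. So (a) cannot occur, and you need an outright contradiction, not a reduction to a single edge $\{\chi,\lambda\chi\}$. Neither your bullet nor your final paragraph gives an argument for this.

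Twisting can show at most that every nonlinear character vanishes on $G-G'$. In the Frobenius case that makes $G$ Frobenius with kernel $G'$ by Lemma \ref{Frobenius}. But twisting says nothing about edges $\{\chi,\theta\}$ witnessed inside $G'$, which is exactly what remains. The lemmas you then invoke do not apply:
\begin{enumerate}
\item Lemma \ref{Frob} assumes $\Gamma_{nv}(G)$ is null.
\item Lemma \ref{pp} needs a Sylow $p$-subgroup kernel $P$ with minimal normal $K$ and $P/K$ a chief factor, and all of $\Irr(G|K)$ vanishing on $\Van(G)\cap P$.
\item Lemmas \ref{pgroup} and \ref{extraspecial} need vanishing hypotheses you never establish, and the conclusion of the latter is not even a contradiction.
\end{enumerate}

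What the paper supplies, and your outline lacks, is the following.
\begin{enumerate}
\item[(i)] Take $g\in\Van(G)\cap(G'-M)$. Triangle-freeness leaves at most one $\chi_0\in\Irr(G|M)$ nonzero at $g$. Compare column orthogonality in $G$ and in $G/M$; all linear characters lie in $\Irr(G/M)$ because $M\subseteq G'$. This forces $\chi_0(g)=0$, so the witness $g_0$ of $\chi$'s edge lies in $M$.
\item[(ii)] For a chief factor $M/N$, use the classification of groups with exactly two nonlinear irreducible characters (\cite{BCH92}, \cite{Ber96}) together with Lemma \ref{quaterniun}. This gives $|\Irr(G/N|G'N/N)|\ge 3$, hence $\Van(G)\cap N=\emptyset$.
\item[(iii)] In both the extraspecial and the Frobenius case, show $M/N\subseteq\Nv(G/N)$. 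This uses a normal abelian Sylow subgroup with \cite[Corollary 2.2]{BDS}, centrality of $M/N$, or Lemma \ref{pp} applied to $G/N$. Then at least three nonlinear characters of $G/N$ are nonzero at $g_0\in\Van(G)$, which is a triangle.
\end{enumerate}
Without some substitute for (i)--(iii), your outline does not prove the theorem.
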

\begin{proof}
	
	On the contrary, assume $\Gamma_{nv}(G)$ does not satisfy any of the three conditions (a)-(c).  So by Theorem C, $G$ is solvable.  
By the hypothesis we have  $\Irr(G/M|G'M/M)=\{\chi\}$. Then   $\Van(\chi)=G-G'M$, by \cite{seitz}. 

\smallskip

{\bf Step 1.} All characters in  $ \Irr(G|G')-\{\chi\}$ are vanishing on $\Van(G)\cap (G'M-M)$. In particular, there exists an element $g_0\in \Van(G)\cap M$ and  unique non-linear character $\theta\in \Irr(G|M)$ such that $\theta(g_0)\neq 0$. 

\smallskip

 Assume otherwise. Then for some $\chi_0\in \Irr(G|G')-\{\chi\}$  there  is an element $g\in \Van(G)\cap (G'M-M)$ such that $\chi_0(g)\not =0$. Since $\Gamma_{nv}(G)$ is triangle-free, then this character must be the only character in  $\Irr(G|G')-\{\chi\}$ such that it is not vanishing on $g$. Thus applying the second orthogonality relation on $g$ we have  $\sum_{\theta_0\in \Irr(G/M)}\theta_0(g)\theta_0(1)+  \chi_0(1)\chi_0(g)=0$. Now again using the second orthogonality  relation on $gM$, we have that $\sum_{\theta_0\in \Irr(G/M)}\theta_0(gM)\theta_0(M)=\sum_{\theta_0\in \Irr(G/M)}\theta_0(g)\theta_0(1)=0$ implying that $\chi_0(g)=0$, a contradiction. Thus every character in $\Irr(G|G')-\{\chi\}$ is vanishing on $\Van(G)\cap (G'M-M)$, as claimed.  
As $\chi$ connects to another irreducible character, we then know for some $g_0\in \Van(G)\cap M$ there exists a unique non-linear character $\theta\in \Irr(G|M)$  such that $\theta(g_0)\neq 0$ (the reason for uniqueness is that we do not have any triangle in the graph).  
We will subsequently fix this unique character $\theta$ and the element $g_0\in \Van(G)\cap M$, where $\theta$ is adjacent to $\chi$, as $\chi(g_0)\theta(g_0)\not=0$. 

\smallskip

% \smallskip

{\bf Step  2.}  Let $N\subset M$ be a normal subgroup of $G$ such that $M/N$ is a chief factor of $G$. Then  $|\Irr(G/N|G'N/N)|\geq 3$.
\smallskip

Assume, on the contrary, that $\Irr(G/N|G'N/N)=\{\chi, \eta\}$. By our hypothesis, $G/M$ is classified as either an extraspecial $2$-group or a Frobenius group of order $p^a(p^a-1)$ with abelian Frobenius kernel of order $p^a$ and cyclic Frobenius complement, where $p$ is a prime and $a$ is an integer, as shown in \cite{seitz}. Referring to \cite{BCH92} and \cite[Theorem 7 and Lemma 2]{Ber96}, we identify three possible cases: 
 $G/N \cong C_3^2 : Q_8$; 
$G/N$ is a $2$-group with $G/N/{\bf Z}(G/N)$ being an extraspecial $2$-group, where $|G'N/N|=2$ and $|{\bf Z}(G/N)|=4$; or 
$G/N/{\bf Z}(G/N)$ is a Frobenius group of order $p^a(p^a-1)$ and ${\bf Z}(G/N)$ is a cyclic group of order $2$ for some integer $a$ and prime $p$.
In all these scenarios, we have $M/N \subseteq \Nv(G/N)$.
From Step 1, there exists a unique $\theta \in \Irr(G|G') - \{\chi\}$ such that $\theta(g_0) \neq 0$ for some $g_0 \in \Van(G) \cap M$. This means that $\eta = \theta$. In all scenarios, $G - G'M = \Van(\chi) \subseteq \Van(\eta)$. 

If there is any non-linear irreducible character $\zeta$ of $G$ that does not vanish on $g \in G - G'M$, then $\lambda \zeta(g) \neq 0$ for all $\lambda \in G/G'$, leading to the existence of two disjoint edges, by Lemma \ref{newchar}. If there is a character $\zeta \in \Irr(G|G') - \{\chi, \theta\}$ such that $\zeta(g) \neq 0$ for some $g \in \Van(G) \cap M$, then $\{\zeta, \theta, \chi\}$ forms a complete graph, resulting in a contradiction. Therefore, by Step 1, for all $\zeta \in \Irr(G|G') - \{\chi, \theta\}$, we find that $\Van(\zeta) = \Van(G)$. 

As a result, in this case, $\Gamma_{nv}(G)$ has exactly one edge $\{\chi, \theta = \eta\}$. If $G/N\cong C_3^2:Q_8$, then we get a contradiction by Lemma \ref{quaterniun}. 
In all the other  possible cases for $G/N$, we find that $\theta = \lambda \chi$, a contradiction, as we assumed $G$ is a counterexample to the theorem.    

%	Now, consider the case where $G/N \cong \mathbb{Z}_3^2\rtimes Q_8$. Here, we observe that $\Van(\chi) \subseteq \Van(\eta) = G - M$. Hence, we have $\Van(\eta) \cup \Van(\theta_0) = \Van(G)$ for all $\theta_0 \in \Irr(G|N)$, indicating that the only edge in $\Gamma_{nv}(G)$ is $\{\eta, \chi\}$. According to Theorem C, this would imply that $G \cong S_4$, leading to a contradiction.

%	Next, we explore the situation where $G/N$ fits one of the other two possibilities. In these cases, we find that  $\Van(\chi) = \Van(\eta) = G - G'N=G-G'M$. In this scenarios,  we again observe that $\theta_0(h)=0$ for all $h\in G'M\cap \Van(G)$ as otherwise $\{\theta_0, \chi, \eta\}$ forms a triangle. Thus 	
% $\Van(\chi) \cup \Van(\theta_0) = \Van(G)$ for all $\theta_0 \in \Irr(G|N)$. This infers that the only connection in $\Gamma_{nv}(G)$ remains as $\{\eta, \chi\}$, which again leads us to conclude, by Theorem C, that $G \cong S_4$, resulting in another contradiction.
\smallskip

{\bf Step 3. } Final contradiction.

\smallskip 

% We use the following claims several times. So we bold them as claims.  

%{\bf Claim 1.} \( \Van(G) \subseteq G - N \)

It is  important to note that \( \Van(G) \cap N = \emptyset \). If that is not the case, then \( \Irr(G/N|G'N/N) \)  leads to a complete subgraph in \( \Gamma_{nv}(G) \).  As $|\Irr(G/N|G'N/N)|\geq 3$ we get a contradiction. Thus, we conclude that \( \Van(G) \subseteq G - N \). 

% {\bf Claim 2.} 

%We claim  that \( G'M = G' \) and consequently $\Van(\chi)=G-G'$. 
%If this is not true, there  exists some \( \lambda \in \Irr(G'M/G') \) such that \( \Van(\lambda\chi) = \Van(\chi) \). This would also imply that the set \(  \chi\) and \( \lambda\chi \) have the same neighbors in $\Gamma_{nv}(G)$, which again results in a contradiction. It is crucial to remember that \( \Van(\chi) = G - G' \).

% {\bf Claim 3.} \( G'/N \not\cong M/N \times G'/M \).   

% On the contrary assume \( G'/N = M/N \times T/N \) with \( T/N \cong G'/M \). Here, for every non-linear character \( \theta_0 \in \Irr(G/T) \cup \Irr(G/M) \), it follows that \( G' - N \subseteq \Nv(\theta_0) \). Consequently, we have \( \Van(\theta_0)\subseteq \Van(\chi) =  G - G' \) for all non-linear characters \( \theta_0 \in \Irr(G/T) \). Therefore, the set \( \{\theta, \chi, \theta_0\} \) would form a triangle for all non-linear characters \( \theta_0 \in \Irr(G/T) \), leading to  another contradiction. Hence, we conclude that \( G'/N \not\cong M/N \times G'/M \).   

{\bf  Case 1.} Assume  that $G/M$ is an extraspecial $2$-group. 

Therefore, either  $G'M/N \cong M/N\rtimes G'M/M\cong  C_p^a \rtimes C_2$ (can be a direct product), for some odd prime $p$, or $|G'M/N|=4$. If we consider the second scenario, we find that $M/N \subseteq \mathbf{Z}(G/N)$. In the first case, the Sylow $p$-subgroup of $G/N$ is both normal and abelian, implying that no character degree in $\Irr(G/N)$ is divided by $p$ and so using \cite[Corollary 2.2]{BDS},  in both situations—regardless of whether $M/N$ is a $2$-group or not—it follows that $M/N \subseteq \Nv(G/N)$. This leads to the conclusion that $M \cap \Van(G) = \emptyset$, as having any overlap would mean that $\Irr(G/N|G'N/N)$ forms a complete graph, contradicting Step 1. 
% indicating that $g_0$ is located in $G' - M$. If there exists any other non-linear irreducible character aside from $\theta$ and $\chi$ that does not vanish on $g_0$, a triangle would form in $\Gamma_{nv}(G)$, which is a contradiction. Therefore, applying the second orthogonality relation, we find that:

%  $$\sum_{\lambda \in \Irr(G/G')} \lambda(g_0) + \chi(g_0) \chi(1) + \theta(g_0) \theta(1) = 0.$$

% Since $\sum_{\lambda \in \Irr(G/G')} \lambda(g_0) + \chi(g_0) \chi(1) = \sum_{\theta_0 \in \Irr(G/M)} \theta_0(g_0) \theta_0(1) = 0$, it follows that $\theta(g_0) = 0$, which leads to a contradiction.

% If there is a character in $\chi\not =\theta_0\in \Irr(G/N|G'/N)$ such that $G-G'\subseteq \Van(\theta_0)$, then either $\Van(\theta_0)=G-M=\Van(G)$ ($|G'/M|=2$) and so we have an isolated vertex which is impossible; or $\Van(\theta_0)=G-G'$ implying that $\{\theta, \theta_0, \chi\}$  forms a triangle and again this is impossible. So for each $\theta_0\in $ 

%Thus we may assume $G'/M=C_p^{a}\rtimes C_2$ and in this case $G'/M$ is a Frobenius group. Note that  for all characters $\theta\in \Irr(G/N|G'/N)$, we have  $M/N\subseteq \Nv(\theta)$ and for all characters $\theta\in \Irr(G/N|M/N)$ we have $G'-M\subseteq \Van(\theta)$. If $M \subseteq \cap \Van(G)\not=\emptyset$, then  $\Irr(G/N|G'N/N)$ forms a complete graph wich is a contradiction. So $M\subseteq \Nv(G)$. Thus $\chi$ must be an isolated vertex in $\Gamma_{nv}(G)$ and this is also a contradiction. 

{\bf Case 2.} Assume $G/M$ is a Frobenius group of order $p^a(p^a-1)$ with abelian kernel of order $p^a$ and cyclic complement of order $p^a-1$.

Initially we show that $G'M=G'$. First assume $|G'M/G'|>2$. Then for all $\lambda\in G'M/G'$ we get that $\Van(\chi)=\Van(\lambda\chi)$, implying that $\lambda\chi$ is adjacent to $\chi$ and in addition $\lambda\chi$
has the same neighbors as $\chi$ for all $1\not=\lambda\in \Irr(G'M/G')$. As $G'M\cap \Van(\chi)=\emptyset$, we get that $\lambda\chi\not =\chi$ for each $\lambda\in  \Irr(G'M/M)$ leading to existence of a triangle, a contradiction.   So we may assume  $|G'M/G'|=2$.  Therefore the group $M/(G'\cap M)\cong C_2$ is a cheif factor of $G$. Setting $N_0:=G'\cap M$ we get that $G/N_0$ has at least three non-linear irreducible character by Step 2.  But $G/N_0$ has a normal subgroup $M/N_0$ of order 2, implying that $M/N_0={\bf Z}(G/N_0)$ and $G/N_0/{\bf Z}(G/N_0)$ is a Frobenius group of order $p^a(p^a-1)$. This group has exactly two non-linear irreducible characters by \cite[Theorem 7]{Ber96},  contradicting Step 2.  So we have $G'M=G'$, as desired.  

Now, we will demonstrate that for every non-linear character $\theta_0 \in \Irr(G|G')$, the character $\theta_0$ vanishes on the set $G - G'$. Suppose there exists a non-linear irreducible character  $\theta_0$ that does not vanish on some element $g \in G - G'$.  Then $\lambda\theta_0\not =\theta_0$ and  $\Van(\lambda\theta_0)= \Van(\theta_0)$, for  some  $\lambda\in \Irr(G/G')$, by Lemma \ref{newchar}.  If $\theta_0(h)\not =0$ for some $h\in M\cap \Van(G)$, then $\{\chi, \theta_0, \lambda\theta_0\}$ forms a complete graph. Thus $\theta_0$   vanishes on $M\cap \Van(G)$, implying that $\theta\not =\lambda\theta_0$.  Thus $\{\chi, \theta\}$ and $\{\theta_0, \lambda\theta_0\}$ for some $\lambda\in \Irr(G/G')$, are two disjoint edges of $\Gamma_{nv}(G)$, contradicting our assumptions. 
%%%
%%%%
%  This implies that $\theta_0 \in \Irr(G/N | \lambda)$, where $\lambda \in \Irr(G' / N | M / N)$ and the index $|I_G(\lambda) / G'| > 1$. Consequently, the character $\lambda$ has $t = |I_G(\lambda) / G'|$ extensions, denoted as $\{\eta_1, \dots, \eta_t\}$, to the group $I_G(\lambda)$. Each of these $\eta_i^G$ belongs to $\Irr(G)$, leading us to conclude that $\theta_0 = \eta_i^G$ for some $1 \leq i \leq t$. 
%%%%% 
%  Moreover, if $\eta_i^G(g) \neq 0$ for a specific $g$, then it follows that $\eta_j^G(g) \neq 0$ for all $j \in \{1, \dots, t\}$. This indicates that all these $t$ characters must be adjacent, resulting in the conclusion that $t = 2$. We can then assume $\theta_0 = \eta_1^G$. 
%%%%%%%%%%%%
% Now, if $\eta_1^G(h) \neq 0$ for some $h \in G' \cap \mathrm{Van}(G)$, then $\eta_2^G(h) \neq 0$ as well, meaning that the set $\{\chi, \eta_1^G, \eta_2^G\}$ forms a triangle, which leads to a contradiction.  
%%%
%%%
%As a result, we can conclude that all other characters in $\Irr(G|G') - \{\chi\}$, (even $\theta$), must vanish on $g_0\in M\cap \Van(G)\subseteq G' \cap \mathrm{Van}(G)$, leading to a contradiction.
Hence we have  all characters in $\Irr(G|G')$ are vanishing on $G-G'$, as desired.

  Thus by the second orthogonality relation we have $|C_{G}(g)|=|G:G'|$ for all $g\in G-G'$. By applying Lemma \ref{Frobenius}, we get that $G$ is a Frobenius group whose kernel is $G'$.   
% Next, we aim to show that \( (|\overline{G'}|, |\overline{G}/\overline{G'}|) = 1 \). Assume for contradiction that a prime \( q \) divides both \( |\overline{G'}| \) and \( |\overline{G}/\overline{G'}| \). Let \( Q \) be a Sylow \( q \)-subgroup of \( \overline{G} \). The intersection \( Q \cap \overline{G'} \) forms a normal subgroup of \( Q \), consequently leading to \( Q \cap \overline{G'} \cap \mathbf{Z}(Q) \neq 1 \). Choosing an element \( h \in Q - \overline{G'} \), we conclude that \( 1 \neq Q \cap \overline{G'} \cap \mathbf{Z}(Q) \subset C_{\overline{G}}(h) \). This result contradicts our earlier statement because \( |Q\overline{G'}/\overline{G'}| \cdot |Q \cap \overline{G'} \cap \mathbf{Z}(Q)| \) must divide \( |C_G(h)| = |\overline{G}/\overline{G'}| \), given that \( Q \overline{G'}/\overline{G'} \) is cyclic.   
%This  implies that $G/G'$ acts  Frobeniously on $G'/N$. 
Thus $G'$ is nilpotent. Hence $G/N$ is a Frobenius group with Frobenius kernel  $G'/N$.    

Now we show that $M/N\subseteq \Nv(G/N)$.  If $p$ does not divide $|M/N|$, then $M/N$ is a normal and abelian Sylow subgroup of $G/N$ and all irreducible character degrees of $G/N$ are coprime to $|M/N|$, implying that $M/N\subseteq \Nv(G/N)$, by \cite[Corollary 2.2]{BDS}, as wanted. If  $M/N$ is a $p$-group, then  applying Lemma \ref{pp}, we get that $M/N\subseteq \Nv(G/N)$, as desired.  
 As \(g_0\in  \Van(G) \cap M \neq \emptyset \), it indicates that \( \Irr(G/N|G'/N) \) forms a complete graph, which is a final contradiction.	
	\end{proof}
\begin{lemma}\label{pnil}
	Let $G$ be a finite  group whose common non-zero graph has exactly one edge $\{\theta, \zeta\theta\}$, for some $\zeta\in \Irr(G/G')$ and $\theta\in \Irr(G|G')$. If $\Van(\chi)=\Van(G)$ for  all $\chi\in \Irr(G|G')-\{\theta, \zeta\theta\}$, then there is a prime $ p $ such that $ p\mid \chi(1) $ for all $ \chi\in \Irr(G|G') $. In particular, $ G $ is $ p $-nilpotent for some prime $ p $.  
\end{lemma}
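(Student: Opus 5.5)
We model the argument on the proof of Lemma \ref{yash}. By Theorem C the group $G$ is solvable, since the graph has one edge and so is triangle-free. As $\theta$ is non-linear, $\Van(G)\neq\emptyset$. By \cite{MNO} we may pick $g\in\Van(G)$ of prime-power order, say a $p$-element, with $g\in\Van(\eta)$ for some non-linear $\eta$.

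Every $\chi\in\Irr(G|G')-\{\theta,\zeta\theta\}$ vanishes on $g$, because $\Van(\chi)=\Van(G)$. By \cite[Corollary 2.2]{BDS}, $p\mid\chi(1)$ for all such $\chi$. If $\theta(g)=0$ or $\zeta\theta(g)=0$, the same corollary gives $p\mid\theta(1)=\zeta\theta(1)$. In that case every character degree in $\Irr(G|G')$ is divisible by $p$, and $G$ is $p$-nilpotent by \cite[Corollary 12.2]{Isaacs}.

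So the main case is $\theta(g)\neq 0$ and $\zeta\theta(g)\neq0$ for every $p$-element $g\in\Van(G)$, with $p\nmid\theta(1)$. Here we must produce a different prime. The only characters in $\Irr(G|G')$ of $p'$-degree are then $\theta$ and $\zeta\theta$, which have equal degree. I would first try to show $\zeta\theta\in\Irr(G/\ker\theta)$, or pass to $G/\ker\theta\cap\ker\zeta\theta$, and invoke the classification of groups with few $p'$-degree non-linear characters (the analogue of \cite[Theorem A]{KB} for two characters). The expected outcome is that $G/\ker$ is a Frobenius-like group or an $S_4$-type group.

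An alternative route I would pursue in parallel uses the second orthogonality relation. Suppose first that some $x\in\Van(G)-G'$. Then every $\chi\in\Irr(G|G')-\{\theta,\zeta\theta\}$ vanishes on $x$. If $\theta(x)\neq0$, then $\lambda\theta(x)\neq 0$ for all $\lambda\in\Irr(G/G')$, and Lemma \ref{newchar} gives more than one edge unless $G/G'$ has exponent $2$ on the relevant coset. Otherwise all characters in $\Irr(G|G')$ vanish on $G-G'$. In that case $|C_G(x)|=|G:G'|$, and Lemma \ref{Frobenius} makes $G$ Frobenius with kernel $G'$. Then every non-linear character vanishes on $G-G'$, and degrees are divisible by the primes of $|G:G'|$ (induced from the kernel). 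Choosing $p$ to be a prime with $g\in G-G'$ a $p$-element then finishes this subcase.

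The main obstacle is the subcase where $\Van(G)\subseteq G'$ and the $p$-elements are nonvanishing for $\theta$. There I would compare the orthogonality sums over $\Irr(G/\ker\theta)$ as in Step 1 of Theorem \ref{triple}. The aim is to force $\theta(g)=0$, or else to show, via Lemma \ref{pp}-type centralizer counting, that $g$ lies in a Frobenius kernel on which $\Irr(G|G')$ does not vanish, which would contradict $g\in\Van(G)$.
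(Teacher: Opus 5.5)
Your proposal has a genuine gap. The case you call the main case, where $\theta(g)\neq 0$ for your chosen vanishing $p$-element $g$ and $p\nmid\theta(1)$, is never resolved. The routes you sketch for it are stated as aims, not carried out:
\begin{itemize}
\item an unspecified analogue of \cite[Theorem A]{KB} for two $p'$-degree characters, which you neither state nor prove;
\item second-orthogonality and centralizer counting, whose last subcase ($\Van(G)\subseteq G'$) you yourself flag as the main obstacle.
\end{itemize}
As written, the argument covers only the situation where the $p$-element you picked happens to be a zero of $\theta$.

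The missing idea is to apply \cite[Theorem B]{MNO} to $\theta$ itself, rather than to an arbitrary non-linear $\eta$. This is exactly the paper's proof.
\begin{itemize}
\item Since $\theta$ is non-linear, there is a prime $p$ and a $p$-element $g$ with $\theta(g)=0$.
\item Then $\zeta\theta(g)=\zeta(g)\theta(g)=0$.
\item Also $g\in\Van(\theta)\subseteq\Van(G)=\Van(\chi)$ for every $\chi\in\Irr(G|G')-\{\theta,\zeta\theta\}$.
\item So every character in $\Irr(G|G')$ vanishes on the $p$-element $g$, and \cite[Corollary 2.2]{BDS} gives $p\mid\chi(1)$ for all of them.
\item Thompson's theorem \cite[Corollary 12.2]{Isaacs} then yields $p$-nilpotence.
\end{itemize}
With this choice of $g$, your problematic case cannot occur. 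Neither Theorem C nor any Frobenius or orthogonality analysis is needed; your first two paragraphs already contain every other ingredient.
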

\begin{proof}
	\cite[Theorem B]{MNO} guarantee that there is  an $p$-element $ g\in \Van(\theta) =\Van(\zeta\theta)$ for some prime $ p $. Then $ p\mid \chi(1) $  for all $\chi\in \Irr(G|G')$, by \cite[Corollary 2.2]{BDS} and   the result follows.
	% or $ p\nmid \theta_{2}(1) $. If $ p\mid \theta_{2}(1) $, then the result follows. So we may assume that $ p\nmid \theta_{2}(1) $. This means that there is exactly one nonlinear character of $ G $ not divisible by $ p $. By Berkovich, $ G/G' $ is of order $ p^{n}-1 $. Let $ h\in \Van(\theta_2) $ be a $ q $-element $ p\not= q $. If $ q\mid \theta_{1}(1) $, then the result follows. If $ q\nmid \theta_{1}(1) $, then by Berkovich $ G/G' $ is order $ q^{n}-1 $.
\end{proof}

 \begin{theorem}\label{key3}
	Let $G$ be a finite group and $\Gamma_{nv}(G) $ has exactly one edge $\{\theta_1, \theta_2\}$.  If $\ker\theta_1=\ker\theta_2$, then $G/\ker\theta_1$ has at least three non-linear irreducible characters. 
\end{theorem}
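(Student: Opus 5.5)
Let $K=\ker\theta_1=\ker\theta_2$ and $\overline G=G/K$. The plan is to argue by contradiction: assume $\overline G$ has at most two non-linear irreducible characters. Since $\theta_1,\theta_2$ are distinct non-linear characters of $\overline G$, this means $\Irr(\overline G|\overline G')=\{\theta_1,\theta_2\}$ exactly. Note that $\overline G$ is non-abelian and both characters are faithful on $\overline G$.

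The first step is to determine the structure of $\overline G$. Groups with exactly two non-linear irreducible characters are classified in \cite{BCH92} and \cite[Theorem 7 and Lemma 2]{Ber96}, as already used in Step 2 of the proof of Theorem \ref{triple}. I would run through that list and use faithfulness of $\theta_1$ and $\theta_2$ to discard every case in which one of them has a non-trivial kernel. Among the remaining configurations I expect the following:
\begin{itemize}
\item $\overline G\cong C_3^2:Q_8$ (where the degree $2$ character is not faithful, so this is excluded at once);
\item $\overline G$ a $2$-group with $\overline G/{\bf Z}(\overline G)$ extraspecial, $|{\bf Z}(\overline G)|=4$ cyclic and $|\overline G'|=2$;
\item $\overline G/{\bf Z}(\overline G)$ a Frobenius group of order $p^a(p^a-1)$ with $|{\bf Z}(\overline G)|=2$;
\item groups with a unique minimal normal subgroup in which both non-linear characters have the same degree, such as $S_4$-like or $\mathrm{SL}_2(3)$-like examples.
\end{itemize}
Since both characters must be faithful, $\overline G$ has cyclic centre and hence a unique minimal normal subgroup $\overline N$ on which both characters are non-trivial.

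The second step is to produce a forbidden extra edge or a triangle. In each case I would locate a vanishing element $\bar g$ of $\overline G$, hence an element of $\Van(G)$, on which $\theta_1\theta_2\neq0$. This is exactly the single edge, and it is consistent. The obstruction must therefore come from elsewhere. Choose $M\supseteq K$ with $M/K=\overline N$. Then $G/M$ has exactly one non-linear character or is abelian. If it has exactly one, say $\chi$, then $\chi\in\Irr(G|G')$ is a third non-linear character. I would show that $\chi$ is adjacent to $\theta_1$ or $\theta_2$, which contradicts the assumption of a single edge. In the central cases ($\overline N={\bf Z}(\overline G)$) this goes as follows. A vanishing element $g$ of the extraspecial or Frobenius quotient lies outside ${\bf Z}$, and I need some $g\in\overline G'\overline N-\overline N$ or similar on which $\theta_i(g)\neq0$ and $\chi(g)\neq0$. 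I would obtain such a $g$ by applying the second orthogonality relation as in Step 1 of Theorem \ref{triple}: there it was shown that characters of $G/M$ summed against degrees vanish on $gM$, which forces the $\theta_i$ to be non-zero where $\chi$ is non-zero on suitable classes.

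The $S_4$-type case ($\overline G=S_4$, or the binary-type analogue) I would handle directly from the character table. In $S_4$ the two degree-$3$ characters are faithful, and the degree-$2$ character, coming from $S_3$, does not vanish on the $3$-cycles. However, the $3$-cycles are non-vanishing, so instead I use the double transpositions. These are vanishing because the sign-twisted degree-$3$ characters take the value $-1$ there, while the degree-$2$ character takes the value $2$ and both degree-$3$ characters take the value $-1$, all non-zero. This gives a triangle, a contradiction.

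I expect the main obstacle to be the bookkeeping of the classification of groups with exactly two non-linear characters, together with the central-extension case. In that case one must check carefully that the inflated character $\chi$ of $G/M$ really is adjacent to a $\theta_i$, or else that a Frobenius-type argument via Lemma \ref{Frobenius} and Lemma \ref{pp} makes $\theta_1$ or $\theta_2$ isolated. I would try the explicit character values first, case by case.
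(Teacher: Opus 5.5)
There is a genuine gap: the contradiction mechanism in your second step cannot work. Under the assumption you are refuting, $\theta_1,\theta_2$ are the \emph{only} non-linear irreducible characters of $\overline G$, and both are faithful. Hence every proper quotient of $\overline G$ is abelian, so $\overline G'$ is the unique minimal normal subgroup, your $M$ equals $G'K$, and $G/M$ is always abelian. The ``third character $\chi$ inflated from $G/M$'' therefore never exists, and the fallback of making $\theta_1$ or $\theta_2$ isolated is not developed.

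Any further vertex of $\Gamma_{nv}(G)$ lies in $\Irr(G|K)$, below the common kernel, and that is where the paper finds its contradiction:
\begin{itemize}
\item Using Lemma \ref{newchar} and the single-edge hypothesis, it shows that every other character in $\Irr(G|G')$ has $\Van(\chi)=\Van(G)$, and that $G'K=G'$. So all of $\Irr(G|G')$ vanishes on $G-G'$.
\item It passes to $G/N$ for a chief factor $K/N$, and notes that $\Van(G/N)\cap G'/N\neq\emptyset$, since otherwise the graph is null.
\item In the Frobenius case, the second orthogonality relation and Lemma \ref{Frobenius} make $G/N$ a Frobenius group whose kernel $G'/N$ is a non-abelian $p$-group. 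Lemma \ref{pp} then gives $|K/N|=p^b$ with $b<a$. The Frobenius action of the complement of order $(p^a-1)/2$ on $K/N$, together with Zsigmondy's theorem, yields a contradiction.
\item In the extraspecial $3$-group case, Lemma \ref{extraspecial} rules out $|K/N|$ prime to $3$, and Lemma \ref{pgroup} rules out $|K/N|$ a power of $3$.
\end{itemize}
None of these ingredients appears in your plan.

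Your case list is also unreliable. The two configurations the paper actually works with are missing from it: extraspecial $3$-groups, and Frobenius groups of order $p^a(p^a-1)/2$ with abelian kernel and cyclic complement, $p$ odd (e.g.\ $D_{10}$, $C_7\rtimes C_3$). Several of the cases you do list are out of scope:
\begin{itemize}
\item The ``Frobenius modulo a central $C_2$'' case is excluded by faithfulness, since one of its two non-linear characters has $\mathbf{Z}(\overline G)$ in its kernel.
\item $S_4$ and $\mathrm{SL}_2(3)$ have three and four non-linear characters respectively, so they fall outside the hypothesis altogether.
\end{itemize}
Your $S_4$ computation is also wrong. Double transpositions are non-vanishing: the five irreducible characters take the values $1,1,2,-1,-1$ there. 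The $3$-cycles, by contrast, are vanishing. $\Gamma_{nv}(S_4)$ has exactly one edge, joining the two degree-$3$ characters. Indeed $S_4$ is case (2) of Theorem B and satisfies the conclusion of this theorem, so it can never produce a contradiction.

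Your second bullet, $2$-groups with $|\overline G'|=2$ and $\mathbf{Z}(\overline G)$ cyclic of order $4$ such as $C_4\circ D_8$, is a genuine case. (Note that ``$\overline G/\mathbf{Z}(\overline G)$ extraspecial'' is inconsistent with $|\overline G'|=2$.) It is not among the cases the paper's proof lists. There $\Van(\theta_i)$ is the complement of the preimage of $\mathbf{Z}(\overline G)$, not $G-G'K$, so this case needs its own argument, and you supply none.
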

\begin{proof} Note that by Theorem C, $G$ is solvable.  
	 Assume that $\ker\theta_1=\ker\theta_2$, and consider that $G/\ker\theta_1$ has exactly two non-linear irreducible characters, with both $\theta_1$ and $\theta_2$ being faithful characters of $G/\ker\theta_1$. According to \cite[Theorem 7 and Lemma 2]{Ber96} and \cite{BCH92}, this implies that $G$ is either an extraspecial 3-group; or a Frobenius group of order $p^a(p^a-1)/2$, with abelian Frobenius  kernel of order $p^a$ and cyclic Frobenius complement,  for some odd prime $p$ and integer $a$. In both cases $\Van(\theta_1)=\Van(\theta_2)=G-G'\ker\theta_1\subseteq \Van(G)$.
	 \smallskip 
	 
	 {\bf Step 1.} For all $\chi\in \Irr(G|G')-\{\theta_1, \theta_2\}$,  $\Van(\chi)=\Van(G)$. 
	 
	 \smallskip

	 Take any non-linear character $\chi\in \Irr(G)$ such that $\chi(g)\neq0$ for some $g\in G-G'\ker\theta_1$. Then, it follows that $\zeta\chi(g)\neq0$ for all $\zeta\in \Irr(G/G')$, leading to the conclusion that $\chi$ is adjacent to $\zeta\chi$, for some $\zeta\in \Irr(G/G')$, by Lemma \ref{newchar}. This presents a contradiction. Consequently, we can deduce that $G-G'\ker\theta_1\subseteq \Van(\chi)$ for all non-linear irreducible characters $\chi$. We consider an element $g\in \Van(G)\cap G'\ker\theta_1$. For any non-linear irreducible character $\chi\in \Irr(G)- \{\theta_1, \theta_2\}$, we find that $\chi(g)=0$. If this were not true, it would imply that $\chi$ is adjacent to $\theta_1$, which leads to another contradiction. Thus $\Van(\chi)=\Van(G)$ for all $\chi\in \Irr(G)-\{\theta_1, \theta_2\}$. 
	
	\smallskip
	
	{\bf Step 2. } $G'=G'\ker\theta_1$  and so  all characters in $\Irr(G|G')$ vanish on $G-G'$.
	
	\smallskip
	
	We observe that $G' \subseteq G'\ker\theta_1$. If we assume $G' \neq G'\ker\theta_1$, we can select $\lambda \in \Irr(G'\ker\theta_1/G')$ such that $\lambda\theta_1 \neq \theta_1$,  by Lemma \ref{newchar}, and $\Van(\lambda\theta_1)=\Van(\theta_1)=\Van(\theta_2)$. This leads to the conclusion that $\theta_2=\lambda\theta_1$, which implies $\ker\theta_2=\ker\theta_1\subseteq \ker\lambda\theta_1$. Therefore, we deduce that $\ker\theta_1\subseteq \ker\lambda$, resulting in a contradiction. Hence, we affirm that $G'=G'\ker\theta_1$.  Thus by Step 1,  we get that all characters in $\Irr(G|G')$ vanish on $G-G'$.
	
%	\smallskip
	
	% {\bf Claim 2. } 
	
	% Now, take any non-linear character $\chi\in \Irr(G)$ such that $\chi(g)\neq0$ for some $g\in G-G'$. Then, it follows that $\zeta\chi(g)\neq0$ for all $\zeta\in \Irr(G/G')$, leading to the conclusion that $\chi$ is adjacent to $\zeta\chi$. This presents a contradiction. %Additionally, should there exist characters $\chi_1$ and $\chi_2$ such that $\chi_1(g)\chi_2(g) \neq 0$ for some $g \in G-G'$, it would indicate that $\chi_1$ is adjacent to $\chi_2$, which is impossible. Thus, we conclude that there is at most one character $\chi_0 \in \Irr(G|G')$ for which $\chi_0(g) \neq 0$ for some $g \in G-G'$. Therefore, by the second orthogonality relation, we can infer that $\sum_{\chi\in \Irr(G/G')}\chi(g)+\chi_0(1)\chi_0(g)=0$, leading us to conclude that $\chi_0(g)=0$, which is a contradiction.
	%	  As a result, all characters in $\Irr(G|G')$ vanish on $G-G'$.
	% 	Thus $\chi(g)=0$ for all $\chi\in \Irr(G|G')$ and $g\in (G-G')$. 
	
	\smallskip
	
	{\bf Step 3.} There is a subgroup  $N\subset \ker\theta_1$, such that $\ker\theta_1/N$ is a chief factor of $G$. Setting $\overline{G}=G/N$, we have that $\Van(\overline{G})\cap \overline{G'}\not=\emptyset$. Moreover, if $\overline{\ker\theta_1}\cap \Van(\overline{G})=\emptyset$, then $\Van(G)\cap \ker\theta_1=\emptyset$. 
	
	\smallskip
	
	Note that $\ker\theta_1\not =1$ as otherwise $\Van(G)=\Van(\theta_1)=\Van(\theta_2)$ and so $\theta_1$ and $\theta_2$ are isolated vertices. Thus  there is a subgroup  $N\subset \ker\theta_1$, such that $\ker\theta_1/N$ is a chief factor of $G$.  Note that $\Irr(\overline{G})$ has at least three non-linear irreducible characters. If $\Van(\overline{G})\cap \overline{G'}=\emptyset$, then
	for all $\chi\in \Irr(\overline{G}|\overline{\ker\theta_1})$ we get that $\Van(G)=\Van(\chi)=\Van(G)=G-G'$, implying that $\Gamma_{nv}(G)$ is null, a contradiction. 
	  So we assume $\Van(\overline{G})\cap \overline{G'}\not =\emptyset$. Moreover, if $\Van(\overline{G})\cap \overline{\ker\theta_1}=\emptyset$ while $\Van(G)\cap \ker\theta_1\not =\emptyset$, we get that all characters in $\Irr(\overline{G}|\overline{G'})$ forms a complete graph, which again leads to another contradiction. 
	\smallskip
	
	{\bf Step 4. }  $G/\ker\theta_1$ is not a Frobenius group of order  \( \frac{p^a(p^a-1)}{2} \), with abelian Frobenius kernel of order $p^a$ and cyclic Frobenius complement,  for some odd prime $p$. 
	
	\smallskip
	
	Assume that \( G/\ker\theta_1 \) is a Frobenius group with  order  \( \frac{p^a(p^a-1)}{2} \) for some odd prime $p$, where the Frobenius complement is cyclic. By applying the second orthogonality relation and referring back to Steps 1 and 2, we find that for every \( x \) in \( \overline{G} - \overline{G'} \), the equality \( |\overline{G}/\overline{G'}| = \sum_{\chi \in \Irr(\overline{G}/\overline{G'})} |\chi(x)|^2 = |C_{\overline{G}}(x)| \) holds. Thus applying Lemma \ref{Frobenius} we get that $\overline{G}$ is a Frobenius group with Frobenius kernel $\overline{G'}$.  
	%%%%%%%%%%%%%%%%%%%
	% Next, we aim to show that \( (|\overline{G'}|, |\overline{G}/\overline{G'}|) = 1 \). Assume for contradiction that a prime \( q \) divides both \( |\overline{G'}| \) and \( |\overline{G}/\overline{G'}| \). Let \( Q \) be a Sylow \( q \)-subgroup of \( \overline{G} \). The intersection \( Q \cap \overline{G'} \) forms a normal subgroup of \( Q \), consequently leading to \( Q \cap \overline{G'} \cap \mathbf{Z}(Q) \neq 1 \). Choosing an element \( h \in Q - \overline{G'} \), we conclude that \( 1 \neq Q \cap \overline{G'} \cap \mathbf{Z}(Q) \subset C_{\overline{G}}(h) \). This result contradicts our earlier statement because \( |Q\overline{G'}/\overline{G'}| \cdot |Q \cap \overline{G'} \cap \mathbf{Z}(Q)| \) must divide \( |C_G(h)| = |\overline{G}/\overline{G'}| \), given that \( Q \overline{G'}/\overline{G'} \) is cyclic.
	%%%%%%%%%%%
	% this implies that $|C_G(h^{O(h)/q})|=|G/L|$ and so  $(|G/L|,|L|)=1$
	%%%%%%%%%%%5
	This indicates that $\overline{G'}$ is a nilpotent group.   	   	 
	If we assume $\overline{G'}$ is abelian, then for every irreducible character $\chi\in \Irr(\overline{G}|\overline{G'})$, we have that $(\chi(1), |\overline{G'}|)=1$. Consequently, $\chi(g) \neq 0$ for any element $g\in \overline{G'}$ and $\chi\in \Irr(\overline{G}|\overline{G'})$, as indicated by \cite[Corollary 2.2]{BDS}. This leads to contradiction, by Step 3.  Therefore, we can affirm that $\overline{G}$ is a non-abelian $p$-group, leading us to the conclusion that $\overline{\ker\theta_1}={\bf Z}(\overline{G'})=\overline{G'}$. Assume $|\overline{\ker\theta_1}|=p^b$ for some integer $b$.  	
	 By Step 3 and applying Lemma \ref{pp}, 	we 
	 %Next, we demonstrate that $\overline{\ker\theta_1}\cap \Van(\overline{G})=\emptyset$. Assume the opposite to be true. Then we have some $g\in \overline{\ker\theta_1}\cap \Van(\overline{G})$. This implies that for any $\chi\in \Irr(\overline{G}|\overline{\ker\theta_1})$, it holds that $\chi(g)=0$. In fact, if this is  not the case, the set $\{\theta_1, \theta_2, \chi\}$ would form a triangle for each $\chi\in \Irr(\overline{G}|\overline{\ker\theta_1})$, which is clearly a contradiction. By using the second orthogonality relation, we then obtain $|G:G'|+\theta_1(1)\theta_1(g)+\theta_2(1)\theta_2(g)=|G:G'|+\theta_1(1)^2+\theta_2(1)^2=3|G:G'|=0$, resulting in another contradiction. Thus  $\overline{\ker\theta_1}\cap \Van(\overline{G})=\emptyset$.
	%%%%%%%%%%%%%%%%%%%%%%%%%
%	Take an element $g\in \Van(\overline{G})\cap (\overline{G'}-\overline{\ker\theta_1})$. On one hand, by Step 1, we have $$p^a=|C_{\overline{G}/\overline{\ker\theta_1}}(g\overline{\ker\theta_1})|=|G:G'|+ |\theta_1(g)|^2+|\theta_2(g)|^2,  $$ which means that $|\theta_1(g)|^2+|\theta_2(g)|^2=(p^a+1)/2$. On the other hand, it follows that $$p^b=|\overline{\ker\theta_1}|<|C_{\overline{G}}(g)|=|G:G'|+ |\theta_1(g)|^2+|\theta_2(g)|^2= (p^a-1)/2 +(p^a+1)/2=p^a,$$ which
 infer that $1\leq b<a$. Given that $\overline{G}/\overline{G'}$ acts Frobeniusly on $\overline{\ker\theta_1}$, we find that $|\overline{G}/\overline{G'}|=(p^a-1)/2$ divides $|\overline{\ker\theta_1}|-1=p^b-1$. This indicates that $(p^a-1)/2=p^b-1$ and so $(p^a-1)$  lacks a primitive prime divisor, leading us to the possibilities: either $a=1$; $a=2$ with $p+1$ being a power of $2$; or $a=6$ with $p=2$, according to Zsigmondy's Theorem \cite[Theorem V]{zsig}. The first scenario is untenable since $1\leq b<a=1$. The third scenario also fails as $2$ divides $p^a-1$. Thus, the only feasible case is $a=2$ and $b=1$. This scenario implies that $(p^2-1)/2=p-1$, leading to the conclusion that $(p+1)/2=1$, which is a contradiction.
	\smallskip

	{\bf Step 5.}  Final contradiction. 
	\smallskip
	
	By Step 4, we assume that $G/\ker\theta_1$ is an extraspecial 3-group. We begin by showing that $\overline{\ker\theta_1}\cap \Van(\overline{G})=\emptyset$. Let $|\overline{\ker\theta_1}|=q^b$, where $q$ is a prime different from 3, and $b$ is an integer. Since for every $\chi\in \Irr(\overline{G}|\overline{G'})$, the value $\chi(1)$ is not divisible by $q$, we can conclude that $\overline{\ker\theta_1}\cap \Van(\overline{G})=\emptyset$, by \cite[Corollary 2.2]{BDS}.
	If  $\overline{\ker\theta_1}$ is a 3-group, then it follows that $\overline{\ker\theta_1}\subseteq {\bf Z}(\overline{G})$, thus confirming our claim. 
	
	 Consequently, we establish that $\Van(G)\cap \ker\theta_1=\emptyset$, by Step 3. But according to Step 3, we have $\Van(\overline{G})\cap \overline{G'}\not=\emptyset$.  Thus there is some element $g\in \Van(G)\cap (G'-\ker\theta_1)$.  If $|\overline{\ker\theta_1}|=q^b$ for some prime $q\neq 3$ and integer $b$, then applying Lemma \ref{extraspecial} and considering Step 1 and Step 2, we get a contradiction. Thus $q=3$. 
	%
%	If  $\Van(G)=G-G'=\Van(\theta_1)=\Van(\theta_2)$, then   $\theta_1$ and $\theta_2$ are isolated vertices in $\Gamma_{nv}(G)$, leading to another contradiction. Thus, we can safely assume $G-G'\subset \Van(G)\subseteq G-\ker\theta_1$. Consequently, for every $\chi\in \Irr(\overline{G}|\overline{\ker\theta_1})$, it follows that $\chi(g)=0$ for all $g\in \Van(G)\cap  (G'-\ker\theta_1)$, by the discussion proceeding the first step.  %Therefore, whether  $\overline{\ker\theta_1}$ is a $3$ group or not, by Claim 1, we have that   for every $\chi\in \Irr(\overline{G}|\overline{\ker\theta_1})$, it follows that $\chi(g)=0$ for all $g\in \overline{G}-\overline{\ker\theta_1}$
%	
Then for every  $\chi\in \Irr(\overline{G}|\overline{\ker\theta_1})$ we get that $\chi(g)=0$ for some $g\in \overline{G'}-\overline{\ker\theta_1}$. As $|\overline{G'}/\overline{\ker\theta_1}|=3$ we get that  $\overline{\ker\theta_1}={\bf Z}(\overline{G})$ and  so $G-\ker\theta_1\subseteq \Van(G)$. Thus  for every  $\chi\in \Irr(\overline{G}|\overline{\ker\theta_1})$ we get that $\chi(g)=0$ for every $g\in \overline{G}-\overline{\ker\theta_1}$, by Step 1. Now we derive a contradiction, applying Lemma \ref{pgroup}. \end{proof}
   {\bf The Proof of Theorem B. }
   	According to Theorem C, we know that $G$ is solvable. If $\theta_2=\zeta\theta_1$ for some $\zeta\in \Irr(G/G')$ and $\Van(G)=\Van(\chi)$ for all $\chi\in \Irr(G|G')-\{\theta_1, \theta_2\}$, then using Lemma \ref{pnil},  the conditions of Case 1 of Theorem B hold.  So we may  assume  that either $\theta_2 \neq \zeta \theta_1$ for any $\zeta \in \Irr(G/G')$, or there exists a character $\chi \in \Irr(G|G') - \{\theta_1, \theta_2\}$ such that $\Van(\chi) \neq \Van(G)$. In this situation, we will demonstrate that $G \cong S_4$. We will establish our findings by examining the following two cases:
  
   	{\bf Case 1.} Let $\ker\theta_1 \neq \ker\theta_2$.   We may assume  $|\ker\theta_2|\geq|\ker\theta_1|$.  Then we work on $G/\ker\theta_2$. 
  	  As $\theta_1\not \in \Irr(G/\ker\theta_2)$ we have  $G/\ker\theta_2$ is either a strongly $\mathcal{H}_1'$-group or an isomorphic to a Frobenius group, where the Frobenius kernel is an abelian group and the  Frobenius complements are $Q_8$. 
   	    	
   	Initially, we consider the latter scenario. Define $N = \ker\theta_2$. Thus, we have $G/N \cong K/N \rtimes H/N$, where $H$ and $K$ are subgroups of $G$, with $K/N \cong A$ being an abelian group and $H/N \cong Q_8$. It is important to note that $G/N$ has  a faithful non-linear irreducible character $\theta_2$ along with at least one another non-linear irreducible character, say $\chi$. Here, we identify $\Van(\theta_2) = G/N - K/N$ and $\Van(\chi) = G/N - G'N/N \subset G/N - K/N$. 
   	
   	We start by assuming that \( K \subseteq \Nv(G) \). This leads to the contradiction that \( \Van(\theta_2) = \Van(G) \). Therefore, we can conclude that \( K \cap \Van(G) \neq \emptyset \). 
   	%%%%%%%%%%%%%%%%%%%
   	%If \( G/N \) has more than two non-linear irreducible characters, then the set \( \Irr(G/N|G'N/N) \) must form a triangle in \( \Gamma_{nv}(G) \),  since \( K \cap \Van(G) \neq \emptyset \), creating  a contradiction. Thus, we find that \( |\Irr(G/N|G'N/N)| = 2 \), which implies that \( G/N \cong C_3^2:Q_8 \) according to \cite{Zhang}.
   	 Since \( \theta_2 \) is adjacent to \( \chi \), as $K\cap \Van(G)\not=\emptyset$, it follows that \( \chi = \theta_1 \),  a contradiction as we assumed that $|\ker\theta_2|\geq|\ker\theta_1|$ and consequently $\theta_1\not \in \Irr(G/N)$.

   	  Therefore, we can conclude that the first case must hold, meaning $G/\ker\theta_2$, is indeed a strongly $\mathcal{H}_1'$-group. 
   	%%%%%%%%%%%%%%%%%
   	Now,  assume $G/\ker\theta_2$ has more than one non-linear irreducible character. For each non-linear character $\chi \in \Irr(G/\ker\theta_2)$, we find that $\Van(\chi) = \Van(\theta_2) = \Van(G/\ker\theta_2)$. This leads us to discover that $\theta_1$  is adjacent to all non-linear characters in $\Irr(G/\ker\theta_2)$, which again results in a contradiction. Hence,  $G/\ker\theta_2$ can only have one non-linear irreducible character.  Thus by applying Theorem \ref{triple} and Lemma \ref{pnil} we get that  the conditions outlined in Case 1 of Theorem B holds, contradicting our assumption. 

   	{\bf Case 2. } Now we assume that $\ker\theta_1=\ker\theta_2$ and by Theorem \ref{key3} we have  $G/\ker\theta_1$ contains at least three non-linear irreducible characters, with both $\theta_1$ and $\theta_2$ being faithful characters of $G/\ker\theta_1$.
   	 Let $ N/\ker\theta_1$ represent the largest normal subgroup of $G/\ker\theta_1$ such that $G/N$ remains non-abelian. Importantly, $N/\ker\theta_1$ cannot be trivial; if it is, then $\Van(\theta_1)=\Van(\chi)$ for every non-linear character $\chi\in \Irr(G/\ker\theta_1)$, by \cite[Theorem 12.3]{Isaacs}, which would imply that $\theta_1$ is linked to at least two other non-linear characters in $\Irr(G/\ker\theta_1)$, leading to a contradiction.
   	
   	Consequently, $G/N$ must possess at least one non-linear irreducible character and $\Irr(G/N)\cap \{\theta_1, \theta_2\}=\emptyset$. For any non-linear irreducible character $\chi$ within $G/N$, we establish that $\Van(\chi)=G-NG'$.  We achieve the results by adhering the following steps:
   	%%%%%%%%%%%%%%%%%%%%%%%%%%%%%%%%%%%%%%%%%%%%%%
   	% and so  they are not vanishing on $\Van(G)\cap NG'$, a contradiction.   Letting $\Irr(\overline{G}/\overline{N}|\overline{G'N}/\overline{N})=\{\chi\}$, we have $\chi$  is distinct from $\theta_1$ and $\theta_2$. 
   	%%%%%%%%%%%%%%%%%%%%%%%%%%%%%%%%%%%%%%%%
   	
   	\smallskip
   	{\bf Step 1. }  $\Van(G)\cap G'N\subseteq \Van(\eta)$ for all $\eta\in \Irr(G|G')-|\Irr(G/N|G'N/N)|$.  In particular there is $g\in G-G'N$ such that $\theta_1(g)\theta_2(g)\not=0$.
   	\smallskip
   	
   	   	If there is an element \( g  \in  NG' \cap \Van(G) \) such that \( \eta(g) \neq 0 \) for some non-linear character \( \eta \) in \( \Irr(G) -\Irr(G/N)\), then \( \eta \) must be adjacent to \( \chi \), which leads to a contradiction. Therefore, there must be an element \( g \) in \( G - NG' \) such that \( \theta_1(g) \theta_2(g) \neq 0 \).
   	%%%%%%%%%%%%%%%%%
   	%So there exists $g\in \Van(\chi) $ such that $\theta_i(g)\not=0$ for $i=1,2$.
   	\smallskip
   	
   	{\bf Step 2.} $G-G'N\subseteq \Van(\eta)$ for every $\eta\in \Irr(G|G')-\{\theta_1, \theta_2\}$.  In particular $\Van(\eta)=\Van(G)$ for all $\eta\in \Irr(G|G')-(\{ \theta_1, \theta_2\}\cup \Irr(G/N|G'N/N))$. 
   	\smallskip

   	 Now, take any non-linear character $\eta\in \Irr(G)-\{\theta_1, \theta_2\}$ such that $\eta(g)\neq0$ for some $g\in G-G'N$. Then, it follows that $\zeta\eta(g)\neq0$ for all $\zeta\in \Irr(G/G')$, leading to the conclusion that $\eta$ is adjacent to $\zeta\eta$, for some $\zeta\in \Irr(G/G')$ by Lemma \ref{newchar}. This presents a contradiction.
   	%%%%%%%
   	%%%%%%5
   	%If \( g \in \Nv(\gamma) \cap \Nv(\eta) \cap (G - NG') \), for some nonlinear character \( \eta \in \Irr(G) \setminus \{\theta_1, \theta_2, \chi\} \) and a non-linear irreducible character \( \gamma \neq \eta \), we arrive at a contradiction since \( \eta \) does not connect with any character. Therefore, when \( g \in (G - NG') \cap \Nv(\eta) \), it must follow that other non-linear characters vanish at \( g \). According to the second orthogonality relation, we can express this as:
   % 	\[
 %  	\sum_{\gamma \in \Irr(G)} \gamma(1) \gamma(h) = \eta(1) \eta(g) + \sum_{\gamma \in \Irr(G/G')} \gamma(g) = 0,
  % 	\]
    %	which leads us to conclude that \( \eta(g) = 0 \), resulting in a contradiction.
   	Hence, by Step 1,  we establish that \( \Van(\eta) = \Van(G) \) for all non-linear characters \( \eta \in \Irr(G) -( \{\theta_1, \theta_2\}\cup \Irr(G/N|G'N/N)) \), as required. 
   	
   	\smallskip
   	{\bf Step 3.}  $\theta_2=\zeta\theta_1$ for some $1\not= \zeta\in \Irr(G/G')$.
   	
   	\smallskip
   	By Step 1, there exists $g\in G-G'N$ such that $\theta_1(g)\not=0. $ As $|G/G'|\geq 2$, then $\zeta\theta_1(g)\not=0$ for all $\zeta\in \Irr(G/G')$, and by Lemma \ref{newchar}, for some $\zeta\in \Irr(G/G')$ we get that $\zeta\theta_1\not=\theta_1$,  implying that $\theta_2=\zeta\theta_1$, as wanted. 
   	
   	\smallskip
   	
   	{\bf Step 4.}   $G'N=G'$.
   	
   	\smallskip
   	
   Assume that \( G'N \neq G' \). For every \( \lambda \in \text{Irr}(G'N/G') \) and every \( \chi \in \text{Irr}(G/N|G'N/N) \), we find that \( \lambda\chi \neq \chi \) because \( \chi \) does not vanish on any element in \( G'N \). Since \( \lambda\chi \) is not in \( \text{Irr}(G/N) \) and \( \text{Van}(\lambda\chi) = \text{Van}(\chi) = G - G'N \), we can conclude that for all \( \chi \in \text{Irr}(G|G') - \{\theta_1, \theta_2\} \), \( \text{Van}(\chi) = \text{Van}(G) \), by Steps 1 and 2. According to Step 3, this matches the condition of Case 1 of Theorem B, which leads to a contradiction.
   	
   	\smallskip
   	
   		{\bf Step 5.} $\Irr(G/N)$ has exactly one non-linear character.
   		
   		\smallskip  
   	
   	Assume instead that \(|\Irr(G/N|G'/N)|\geq 2\). According to \cite[Lemma 12.3]{Isaacs}, we have \(\Van(\chi)=G-G'N=G-G'\) for every non-linear character \(\chi\in \Irr(G/N)\). This means that all these characters must be adjacent, or it could be that \(\Van(G)=\Van(\chi)\) for all \(\chi\in \Irr(G/N|G'/N)\). Since the first scenario is not possible we find that for all \(\chi\in \Irr(G|G')-\{\theta_1, \theta_2\}\), we have  \(\Van(\chi)=\Van(G)\). This creates a contradiction, as we assumed that Case 1 of Theorem B does not hold.
   	
   	\smallskip
   	
   	$\bullet$ In the sequel by $\chi$, we refer to the unique character in $\Irr(G/N|G'/N)$. Moreover in the following Steps we   assume that $M$ is a normal subgroup of $G$ that is contained in $N$, where $N/M$ is a chief factor of $G$. Note that $\Irr(G/M|G'/M)$ has more than one character. Thus by Steps 1 and 2, we get that $\Van(G)\cap M=\emptyset$, as otherwise  $\Irr(G/M|G'/M)$ forms a complete subgraph. 
   	
   	\smallskip

   	{\bf Step 6.} $G/N$ is a Frobenius group of order $p^a(p^a-1)$ with cyclic Frobenius  complement  of order $p^a-1$ and elementary abelian Frobenius kernel of order $p^a$. 
   	\smallskip
   	
  Note that by Step 4, we have $N \subset G' \subset G$. Moving to Step 5, it follows that $G/N$ is either an extraspecial 2-group of order $2^{2n+1}$, for some integer $n$, or a Frobenius group of order $(p^a-1)p^a$, where $p$ is a prime and $a$ is an integer, with abelian Frobenius kernel of order $p^a$ and cyclic Frobenius complement. We will proceed under the assumption that the first case holds.
  
  If $N/M$ is a 2-group, then it must be contained in ${\bf Z}(G/M)$. Should it happen that $N/M \neq {\bf Z}(G/M)$, we find that $G'/M$ is also contained in ${\bf Z}(G/M)$. Consequently, for all characters $\eta \in \Irr(G/M|G'/M)$, we have $\Van(\eta) \subseteq G - G'$. Thus, by Steps 1 and 2, we conclude $\Van(G) = G - G' = \Van(\eta)$ for all $\eta \in \Irr(G|G') - \{\theta_1, \theta_2\}$, leading to a contradiction as it satisfies the conditions of Case 1 of Theorem B. Therefore, we can assume that $N/M = {\bf Z}(G/M)$ has order 2, which means that $G - N \subseteq \Van(G)$. If $\eta \in \Irr(G/M|G'/M) - \{\chi, \theta_1, \theta_2\}$, then $\Van(\eta) = G - N$ based on Steps 1 and 2.  From Lemma \ref{pgroup}, we get a contradiction. 
  %%%%%%%%%%%%%%%%%%%
 % From \cite[problem 6.3]{Isaacs}, we find $\eta(1) = \sqrt{2^{2n+1}}$, which gives us a contradiction. 
 Thus, we can assume that $\Irr(G/M|G'/M) = \{\chi, \theta_1, \theta_2\}$.  Noting that $\theta_2 = \zeta \theta_1$ for some $\zeta \in \Irr(G/G')$, by following \cite[Theorem 7 and Lemma 2]{Ber96}, the only possibility left is that $2^{2n+2} = |G/M| = 2^{2c+3}$ for some integer $c$, which is not feasible.
  
  Next, let's consider the case where $N/M$ is not a 2-group. First assume  $C_{G'/M}(N/M) \not\subseteq N/M$. In this scenario, $G'/M$ must be abelian of order $2|N/M|=2q^c$, for some prime $q$ and integer $c$. Applying Lemma \ref{pq}, we conclude that   $G'/M \cap \Van(G/M) = \emptyset$. For all characters $\eta \in \Irr(G/M|G'/M)$, we again have $\Van(\eta) \subseteq G - G'$. Thus, by Steps 1 and 2, it follows that $\Van(G) = G - G' = \Van(\eta)$ for all $\eta \in \Irr(G|G') - \{\theta_1, \theta_2\}$, which is a contradiction. Hence, we conclude that $C_{G'/M}(N/M) = N/M$. According to \cite[Lemma 2.9]{silvio1}, we find that $G' - N \subseteq \Van(G)$. By Steps 1 and 2 and utilizing \cite[Corollary 2.2]{BDS}, we deduce that all non-linear irreducible character degrees are divided by $2$ and so  Thompson's theorem leads to that $G$ is $2$-nilpotent.   
    Since there is not any chief factor $N/M_0$ that is a 2-group based on the earlier discussion, we conclude that $P \cong G/N$, where $P\in \Syl_2(G)$. Moreover by applying the second orthogonality relation we get that $G'/N\cong C_2$ is acting Frobeniusly on $N$ and so $N$ is abelian, as $G'-N\subseteq \Van(\eta)$ for all $\eta\in \Irr(G|N)$. Hence $N\subseteq \Nv(G)$.  First assume $\Irr(G/M)\cap \{\theta_1, \theta_2\}=\emptyset$. Then by Steps 1 and 2, we get that $\Van(G/M)=\Van(\eta)$ for all $\eta\in \Irr(G/M|G'/M)-\{\chi\}$. 
     Now applying Lemma \ref{extraspecial} we get that $G/M$ is a Frobenius group with Frobenius complement $G/G'\cong Q_8$. In this case we have that for each $y\in P-{\bf Z}(P)$, $y^2=x\in {\bf Z}(P)\subseteq G'-N\subseteq \Van(G)$. Thus $C_G(y)\subseteq C_G(x)\subseteq P$.  Thus $P\cong Q_8$ is acting Frobeniusly on $N$     
      and so  $\Gamma_{nv}(G)$ is null and we get a contradiction. 
            
      Thus we may assume $\{\theta_1, \theta_2\}\subseteq \Irr(G/M)$. From now on we may assume $M=1$ and $N$ is a  minimal normal subgroup of $G$ of order $q^c$. If $P$ acts Frobeniusly on $N$ by the earlier discussion  we get a contradiction.           
      Hence, \( P\cong G/N \) cannot act Frobeniusly on \( N \), while the action of $G'/N$ on $N$ is in a Frobenius manner. Thus, there exists some element \( x \in N \) such that \( |C_{G}(x)/N| \geq 2 \). 
             This indicates that there is a character \( \lambda \in \Irr(N) \) such that \( |I_{G}(\lambda)/N| \geq 2 \) and $I_G(\lambda)\cap G'=N$. Note that  \( \lambda^{G'} \in \Irr(G') \) is the only character above \( \lambda \) in \( G' \). As \( I_{G}(\lambda^{G'}) /G'= I_{G}(\lambda)G' /G' \) is a  group of order at least $2$, then $\lambda^{G'}$ extends to $I_{G}(\lambda^{G'})$, by \cite[Theorem 6.26]{Isaacs}.
      
      We claim that for any irreducible character of $G$ above $\lambda^{G'}$, say $\gamma$,  there is at least one element  \( g\in I_{G}(\lambda^{G'})-G' \) such that $\gamma(g)\not =0$. Assume otherwise. Note that $\gamma=\eta^{G}$, where $\eta$ is an extension of $\lambda^{G'}$ to $I_{G}(\lambda^{G'})$, by Clifford's correspondence \cite[Theorem 6.11]{Isaacs}.  Thus by our assumption $\gamma$ vanishes  on $G-G'$.  	Assume $\{g_1, \dots g_{t}\}$ is a set of right transversal of $I_{G}(\lambda^{G'})$ in $G$ and $t=|G:I_{G}(\lambda^{G'})|$. Therefore applying \cite[Lemma 2.29]{Isaacs}, we have  $$|G/G'|=|G/G'|[\gamma,\gamma]=[\gamma_{G'}, \gamma_{G'}]=[\sum_{i=1}^{t}( \lambda^{G'})^{g_i}, \sum_{i=1}^{t}( \lambda^{G'})^{g_i}]=t,$$
      which is a contradiction. Thus our claim is valid. 
      
      Note that according to Gallagher's  Theorem and Clifford's correspondence (see \cite[Theorem 6.11 and 6.17]{Isaacs})  there are $|I_{G}(\lambda^{G'})/G'|$ distinct irreducible characters above $\lambda^{G'}$ in $G$. Therefore, all those  irreducible characters above \( \lambda^{G'} \) in \( G\) must be either \( \theta_1 \) or \( \theta_2 \), as they are the only non-linear irreducible characters that do not vanish on some elements in \( G - G' \), according to Steps 1 and 2. Hence $|I_{G}(\lambda^{G'})/G'|=2$. 
      
      Based on this discussion, all characters \( \lambda \in \Irr(N) \) with the property \( |I_{G}(\lambda)/N| \geq 2 \) must be \( G \)-conjugate. Otherwise, there would be other characters besides \( \theta_1 \) and \( \theta_2 \) that do not vanish on some elements in \( G - G' \). Therefore, the action of \( G/N\) on \( N \cong \Irr(N) \) has exactly one orbit of size \(|P|/2\) and other orbits have size $|P|$. 
      
      By applying Lemma \ref{orbit}, we have \( 2(q^{c_0} - 1) = |P|/2=2^{2n} \) for some integer \( c_0 \). Thus, Zsigmondy's theorem  implies that either \( c_0 = 1 \) or \( q^{c_0} = 3^2=|P|/4+1=2^{2n-1}+1 \).  In the former case, $q-1=2^{2n-1}$ and this is not possible unless $n=1 $ and $q=3$.

      In the later case, we find that $n=2$. Thus in both cases we have       
       $q=3$ and so  the orbit sizes for the action of \( G/N \) on \( N\cong \Irr(N) \) are either \(  2^{2n+1} \) or \( 2^{2n} \), for $n=1,2$. Therefore, \( q^c - 1 = 3^c - 1 =  2^{2n}+ k(2^{2n+1}) = 2^{2n} \times (1 + 2k) \) for some integer \( k \).   If $n=1$, then 
      $4=(3^c-1)_2$, and so we have \(2\mid c\), and consequently \( 3^c - 1 \) is divisible by \( 8  \), which leads to a contradiction. If $n=2$, then $16=(3^c-1)_2$ and so $c_2=4$, which means that $|G|=32\times 3^c$. On the other hand $|x^G|=16$ for some $x\in N$, implying that $c\leq 16$. Thus  $c\in \{4,12\}$. Hence $G$ is a group of order $2^5\times 3^4$ or $2^5\times 3^{12}$.  On the other hand an extraspecial $2$-group of order $32$  does not have any irreducible module over field of order $3$ of dimention 12. Thus the only possibility for $|G|$ is $3^4\times 2^5$. Checking upon by GAP there is no such groups with those properties.  
    
    \smallskip
    
    % Let \( x \in \text{Z}(P) \). This indicates that all non-linear irreducible characters, with the exception of \( \chi \). By applying the second orthogonality relation, we can conclude that \( |C_G(x)| = |G:G'| + |\chi(x)|^2 = |G/N| = |P| \). Now, consider \( y \in P - \text{Z}(P) \). In this case, we find that \( y^2 = x \). Therefore, we have \( C_G(y) \subseteq C_G(x) \subseteq P \), which shows that \( P \) is acting Frobeniusly on \( N \). It follows that  \( P \) is a quaternion group. This implies that \( P \cong Q_8 \). Consequently, \( \Gamma_{nv}(G) \) is null, which presents a contradiction.
    
   	{\bf Step 7. }  $|C_G(h)|=|G/G'|$ for all elements $h\in G-G'$ such that $hG'$ has order greater than 2. In particular Hall $2'$-subgroup of $G/G'$ is acting on $G'$,  Frobeniusly.  
   	
   \smallskip
   	
   	Recall that by Step 6, \( G/G' \) is a cyclic group of order \( p^a - 1 \), where \( p \) is a prime and \( a \) is an integer. Assume \( h \in G - G' \) such that the order of \( hG' \) in \( G/G' \) is greater than 2. If \( \theta_1(h) \neq 0 \), then by Lemma \ref{newchar}, there exist at least three non-linear irreducible characters that do not vanish at \( h \), leading to the conclusion that a triangle must exist, creating a contradiction. Therefore, we must assume \( \theta_1(h)  = 0 \). Hence by Steps 2-4,  all non-linear irreducible characters are vanishing on $h$ and applying the second orthogonality relation on $h$ we get that $|C_G(h)|=|G/G'|$ as desired.
   	
   	Let $hG'$ has odd order in $G/G'$. Let \( H/G' \) be the Hall \( 2' \)-subgroup of \( G/G' \). Consequently, \( |C_H(h)| = |H/G'| \) for every \( h \in H - G' \). By applying Lemma \ref{Frobenius}, we conclude that \( H \) acts Frobeniusly on \( G' \), as required.

   		% As $hG'$ has order $|G/G'|$ we get that $h$ has order $|G/G'|$  and $C_G(h)=\langle h\rangle$.
   	%	This implies that $C_G(h)=\langle h\rangle $ for all elements $h\in G-G'$ such that $hG'$ has order greater than 2. 
   	
   	\smallskip
   		 
   		{\bf Step 8.} $N/M$ is not a $p$-group. 
   		
   		\smallskip

   	Assume instead that \( |N/M| = p^b \), for some integer $b$, indicating that it is a \( p \)-group. We know that \( \Irr(G/M|G'/M) \) contains at least two characters. Moreover, it holds that \( \Van(G) \cap G' \subseteq \Van(\theta) \) for every \( \theta \in \Irr(G|G') - \{\chi\} \). If \( \Van(G) \cap (G' - M) = \emptyset \), then it follows that \( \Van(G) = G - G' \), by Steps 1 and 2. Consequently, the vanishing set of all non-linear irreducible characters, except for \( \theta_1 \) and \( \theta_2 \), would coincide with \( \Van(G) \), leading us to Case 1 of Theorem B. Therefore, we may assume that \( \Van(G) \cap (G' - M) \neq \emptyset \).  From this point, we will focus on \( \tilde{G} = G/M \).   By applying Lemma \ref{pp}, we have that $b<a$. 
   	
   %%%%%%%%%%%%%%%%%%%%%%%%%%%%%%%%%   	
   %	From this point, we will focus on \( \tilde{G} = G/M \). Thus, we have \( \tilde{N} \subseteq \mathbf{Z}(\tilde{G'}) \).   	
  % 	Now, let \( g \in \tilde{N} \cap \Van(\tilde{G}) \). We have the following equation:
   	
  % 	$$ p^{b+a} = |C_{\tilde{G}}(g)| = |G:G'| + |\chi(g)|^2 = |G:G'| + \chi(1)^2 = p^a - 1 + (p^a - 1)^2,$$
   	
 %  	which presents a contradiction. Hence, we can assume \( g \in \tilde{G'} - \tilde{N} \). By applying the second orthogonality relation to \( g \), we derive:
 %  	
 %  	\[
 %  	p^b = |\tilde{N}| < |C_{\tilde{G}}(g)| = \sum_{\theta \in \Irr(\tilde{G})} |\theta(g)|^2 = |\chi(g)|^2 + |G:G'| = (-1)^2 + p^a - 1 = p^a,
 %  	\]
  % 	   	where the first inequality follows from \( \tilde{N} \subseteq \mathbf{Z}(\tilde{G'}) \). Therefore, we deduce that \( b < a \).
%%%%%%%%%%%%%%%%%%%%%%%%%%%%%%%%%%%%   %	
   	First, we assume \( p^a - 1 = 2^{\beta} \) for some integer \( \beta \). This leads us to consider two scenarios, by Zsigmondy's  Theorem: either \( p^a = 3^2 \) or \( a = 1 \). Given that \( b < a  \), the second case cannot occur. Therefore, we conclude that \( a = 2 \), \( b = 1 \), and \( p = 3 \). Consequently, \( |\tilde{G}| = 216 \), and upon checking with GAP \cite{gap}, we find that \( \Gamma_{nv}(\tilde{G}) \) contains more than one edge, which is a contradiction.
   	
   	Following Step 7, we note that the Hall \( 2' \)-subgroup of \(\tilde{G}/\tilde{G'}\cong G/G' \) acts Frobeniusly on \( \tilde{G'} \). This indicates that \( (p^a - 1)_{2'} \mid (p^b - 1) \). Since \( b < a \), it follows that \( p^a - 1 \) does not possess any primitive prime divisor. Thus, Zsigmondy's theorem (see \cite[Theorem V]{zsig}) implies one of the following: either \( a = 1 \); \( a = 2 \) with \( p + 1 \) being a power of \( 2 \); or \( p = 2 \) and \( a = 6 \). The first case is impossible, as it would imply \( 1 < b < a = 1 \), and the last case does not hold since, in that scenario, \( (p^a - 1)_{2'} = (p^a - 1) \mid (p^b - 1) \) with \( b < a \). Thus, the only viable option is \( a = 2 \) and \( b = 1 \). 
   %%%%%%%%%%%%%%%%%%%%%%%%%%%%%%%	
   		It’s important to note that if \( \tilde{G'}\) is abelian, then none of the irreducible character degrees would be divisible by \( p \). By referencing \cite[Corollary 2.2]{BDS}, we find that \( \Van(G) \cap (G' - M) = \emptyset \), which is a contradiction. 
   	Thus  \( \tilde{G'} \) is a non-abelian group of order \(|\tilde{G'}|=|\tilde{G'}/\tilde{N}||\tilde{N}|=p^2\times p= p^3 \).  	
   	Therefore, \( \tilde{G'} \) has exactly \( p - 1 \) non-linear irreducible characters, where all of them have degree $p$.
   	%%%%%%%%%%%%%%%%%%%%%%%%%%
   	% all  of them of  degree \( p \), and all these characters vanish on \( \tilde{G'} - \tilde{N} \).
   	
   		Referring back to Step 7, for each non-trivial \( x \in \tilde{N} \), we have \( \tilde{G'} \subseteq C_{\tilde{G}}(x) \subseteq \tilde{K} \), where \( \tilde{K} \) is a subgroup of \( \tilde{G} \) with order \( 2|\tilde{G'}| \). Consequently, for every non-trivial character \( \lambda_0 \in \Irr(\tilde{N}) \), we derive \( \tilde{G'} \subseteq I_{\tilde{G}}(\lambda_0) \subseteq \tilde{K} \). Given that \( |\tilde{N}| = p \), it follows that for each \( \lambda_0 \in \Irr(\tilde{N}) \), \( p^3 \) divides \( |I_{\tilde{G}}(\lambda_0)| \), and \( |I_{\tilde{G}}(\lambda_0)| \) divides \( 2p^3 \).
   	 Let \( \lambda \in \Irr(\tilde{G'}) \) be a non-linear character  and assume 
   	 \( \lambda \) is above \( \lambda_0 \in \Irr(\tilde{N}) \). 
   	 %then \( I_{\tilde{G}}(\lambda) = I_{\tilde{G}}(\lambda_0) \).
   	   	Since \( \lambda_{\tilde{N}} = p \lambda_0 \), we have \( \tilde{G'}\subseteq  I_{\tilde{G}}(\lambda) \subseteq I_{\tilde{G}}(\lambda_0) \). %Let \( h \in I_{\tilde{G}}(\lambda_0) \). Now, take \( x \in \tilde{G'} - \tilde{N} \), then \( \lambda^{h}(x) = \lambda(x^{h^{-1}}) = 0 = \lambda(x) \). When \( x \in \tilde{N} \), it follows that \( \lambda^h(x) = p \lambda_0^h(x) = p \lambda_0(x) = \lambda(x) \), which implies \( h \in I_{\tilde{G}}(\lambda) \), as claimed.   	
    This leads to \(  |\tilde{G}:I_{\tilde{G}}(\lambda)| \in \{ p^2 - 1, (p^2 - 1)/2 \} \). On the other hand, since the number of non-linear irreducible characters of \( \tilde{G'} \) equals \( p - 1 \), it follows that \( |\tilde{G}:I_{\tilde{G}}(\lambda)| \leq p - 1 \), which results in a contradiction.
   	    	 
   	 % Since $|\tilde{N}|=p$ we get that $I_{\tilde{G}}(\lambda)$ for all non-linear character
   	    %	 	 $\lambda\in\Irr(\tilde{G'})$ are the same.  According to Step 7, for each $\lambda_0\in \Irr(\tilde{N})$   Thus $|\tilde{G}:I_{\tilde{G}}(\lambda_0)|=|\tilde{G}:I_{\tilde{G}}(\lambda)|\in \{p^2-1, (p^2-1)/2\}$.
   	      \smallskip
   	      
   		  		{\bf Step 9.}  $G/M\cong S_4$. In particular $M=\ker\theta_1$.
   		
   		\smallskip

   	By Step 8,  $\tilde{G'}\cong G'/M$ is not a $p$-group. Thus, we can assume that $|\tilde{N}|=q^{c}$ for some integer $c$ and prime $q\not =p$. First let $\tilde{G'}$ be  abelian. Then $\tilde{G'}$ can be expressed as $\tilde{N}\times \tilde{K}$, where $\tilde{K}$ is a normal subgroup of $\tilde{G'}$ isomorphic to $\tilde{G'}/\tilde{N}=G'/N$ with order $p^a$. First assume $q\mid |G/G'|$. Then $\tilde{G}/\tilde{K}$ has at least one non-linear irreducible  character. Moreover as $G/G'\cong \tilde{G}/\tilde{G'}$ is cyclic, then the Sylow $q$-subgroup of $\tilde{G}/\tilde{K}$ is normal and abelian. Thus no irreducible character degree of $\tilde{G}/\tilde{K}$ is divisible by $q$ and so no irreducible character of $\tilde{G}/\tilde{K}$ is vanishing on whole  $\tilde{G}-\tilde{G'}$. So by Steps 1-4,  $\tilde{G}/\tilde{K}$ has exactly two non-linear irreducible characters $\theta_1$ and $\theta_2$. 
   As $\tilde{G'}/\tilde{K} $ is the only  minimal normal subgroup of $\tilde{G}/\tilde{K}$, both this characters are faithful in $\tilde{G}/\tilde{K}$ and so using Theorem \ref{key3} we get a contradiction. 
   	Therefore,  we may assume $(q, |G/G'|)=1$.  We find that the Sylow $t$-subgroup of $\tilde{G}$ is both normal and abelian for $t \in \{p, q\}$. This leads us to the conclusion that $\tilde{G'} \subseteq \Nv(\tilde{G})$, as per \cite[Corollary 2.2]{BDS}. 
   	%%%%%%%%%%%%%%%%%%%%%%%%
   %	If $q\mid |G/G'|$, then the Hall $p'$-subgroup of $\tilde{G}$ has a normal and abelian Sylow $q$-subgroup, as $\tilde{G}/\tilde{G'}$ is abelian.  Therefore no irreducible character degree of the Hall $p'$-subgroup of $\tilde{G}$ is divided by $q$. This implies that Sylow $q$-subgroup of Hall $p'$-subgroup of $\tilde{G}$ must be contained in $\Nv(\tilde{G})$. 
    Consequently, $(G'-M) \cap \Van(G) = \emptyset$, implying that $\Van(G) = \Van(\theta) = G - G'$ for all $\theta \in \Irr(G|G')-\{\theta_1, \theta_2\}$, by Steps 1 and 2, which contradicts our assumption. Thus we assume $\tilde{G'}$ is not abelian, implying that $C_{\tilde{G}}(\tilde{N})=\tilde{N}$.     
     Therefore, we must have $\tilde{G'}-\tilde{N} \subset \Van(G)$, according to \cite[Lemma 2.9]{silvio1}. Thus by Step 1, for every $g \in \tilde{G'} - \tilde{N}$, it follows that $|C_{\tilde{G'}}(g)| = \sum_{\theta\in \Irr(G/N)}|\theta(g)|^2 =|C_{G/N}(gN)|=p^a$. Thus, $\tilde{G'}/\tilde{N}$ acts on $\tilde{N}$  Frobeniusly. As a result, $\tilde{G'}/\tilde{N}$ must be cyclic, leading us to conclude that $a = 1$. Thus  $\tilde{G}/\tilde{N}$ is a Frobenius group of order $p(p-1)$.

   	Let \( p-1 \) have an odd divisor. Then the Hall \( 2' \)-subgroup of \( G/G' \cong \tilde{G}/\tilde{G'} \) acts on \( \tilde{G'} \) Frobeniusly, as established in Step 7. This implies that \( \tilde{G'} \) is nilpotent, which leads to 
   	   	 a contradiction, as $\tilde{G'}$ is a Frobenius group.
   	   	Consequently, we conclude that \( p-1 = 2^{\alpha} \) for some integer \( \alpha \), implying that $|\tilde{G}|=2^{\alpha}(2^{\alpha}+1)q^c$. It is widely known that a Frobenius group cannot act Frobeniusly on any group. Hence, \( \tilde{G}/\tilde{N} \) cannot act Frobeniusly on \( \tilde{N} \), while the action of $\tilde{G'}/\tilde{N}$ on $\tilde{N}$ is in a Frobenius manner. Thus, according to Step 7, there exists some element \( x \in \tilde{N} \) such that \( |C_{\tilde{G}}(x)/\tilde{N}| = 2 \). This indicates that there is a character \( \lambda \in \Irr(\tilde{N}) \) such that \( |I_{\tilde{G}}(\lambda)/\tilde{N}| = 2 \). Note that  \( \lambda^{\tilde{G'}} \in \Irr(\tilde{G'}) \) is the only character above \( \lambda \) in \( \tilde{G'} \). As \( I_{\tilde{G}}(\lambda^{\tilde{G'}}) /\tilde{G'}= I_{\tilde{G}}(\lambda)\tilde{G'} /\tilde{G'} \) is a cyclic group of order $2$, then $\lambda^{\tilde{G'}}$ extends to $I_{\tilde{G}}(\lambda^{\tilde{G'}})$, by \cite[Corollary 10.12]{Isaacs}.
   	   	 
   	   	We claim that for any irreducible character of $\tilde{G}$ above $\lambda^{\tilde{G'}}$, say $\gamma$,  there is at least one element  \( g\in I_{\tilde{G}}(\lambda^{\tilde{G'}})-\tilde{G'} \) such that $\gamma(g)\not =0$. Assume otherwise. Note that $\gamma=\eta^{\tilde{G}}$, where $\eta$ is an extension of $\lambda^{\tilde{G'}}$ to $I_{\tilde{G}}(\lambda^{\tilde{G'}})$, by Clifford's correspondence \cite[Theorem 6.11]{Isaacs}.  Thus by our assumption $\gamma$ vanishes  on $\tilde{G}-\tilde{G'}$.  	Assume $\{g_1, \dots g_{|\tilde{G}/\tilde{G'}|/2}\}$ is a set of right transversal of $I_{\tilde{G}}(\lambda^{\tilde{G'}})$ in $\tilde{G}$. Therefore applying \cite[Lemma 2.29]{Isaacs}, we have  $$|\tilde{G}/\tilde{G'}|=|\tilde{G}/\tilde{G'}|[\gamma,\gamma]=[\gamma_{\tilde{G'}}, \gamma_{\tilde{G'}}]=[\sum_{i=1}^{|\tilde{G}/\tilde{G'}|/2}( \lambda^{\tilde{G'}})^{g_i}, \sum_{i=1}^{|\tilde{G}/\tilde{G'}|/2}( \lambda^{\tilde{G'}})^{g_i}]=|\tilde{G}/\tilde{G'}|/2,$$
     which is a contradiction. Thus our claim is valid. 
   	
   	  Note that according to Gallagher's  Theorem and Clifford's correspondence (see \cite[Theorem 6.11 and 6.17]{Isaacs})  there is only two characters above $\lambda^{\tilde{G'}}$in $\tilde{G}$. Therefore, the two irreducible characters above \( \lambda^{\tilde{G'}} \) in \( \tilde{G} \) must be \( \theta_1 \) and \( \theta_2 \), as they are the only non-linear irreducible characters that do not vanish on some elements in \( G - G' \), according to Steps 1 and 2. 
   	
   	Based on this discussion, all characters \( \lambda \in \Irr(\tilde{N}) \) with the property \( |I_{\tilde{G}}(\lambda)/\tilde{N}| = 2 \) must be \( \tilde{G} \)-conjugate. Otherwise, there would be other characters besides \( \theta_1 \) and \( \theta_2 \) that do not vanish on some elements in \( G - G' \). Therefore, the action of \( \tilde{G}/\tilde{N} \) on \( \tilde{N} \cong \Irr(\tilde{N}) \) has exactly one orbit of size \((p-1)p/2= 2^{\alpha - 1}p \) and other orbits have size $(p-1)p=2^{\alpha}p$, by Step 7. 
   	
   	By applying Lemma \ref{orbit}, we have \( p(q^{c_0} - 1) = 2^{\alpha - 1}p \) for some integer \( c_0 \). Thus, Zsigmondy's theorem  implies that either \( c_0 = 1 \) or \( q^{c_0} = 3^2=2^{\alpha-1}+1=(p+1)/2 \), which leads to \( p = 17 \). In the second case, we find that the orbit sizes for the action of \( \tilde{G}/\tilde{N} \) on \( \tilde{N}\cong \Irr(\tilde{N}) \) are either \( 2^{\alpha}p = 8 \times 17 \) or \( 2^{\alpha - 1}p = 16 \times 17 \). Therefore, \( q^c - 1 = 3^c - 1 = 17 \times 8 + k(17 \times 16) = 17 \times 8(1 + 2k) \) for some integer \( k \).     	
   	As $17\mid (3^c-1)$, we have  \( 16 \mid c \), and consequently \( 3^c - 1 \) is divisible by \( 16 \), which leads to a contradiction. Hence, we conclude that  $c_0=1$ and \( q = 2^{\alpha - 1} + 1 \) while \( p = 2^{\alpha} + 1 \), which is only possible if \( \alpha = 1 \), leading to the conclusion that  \( p = 3 \) and \( q = 2 \). 
   	
   	As $\tilde{N}$ is a $2$-group,  there exists \( x \in \tilde{N} \) such that \( Q \subseteq C_{\tilde{G}}(x) \), where \( Q \in \text{Syl}_2(\tilde{G}) \). Therefore, \( |x^{\tilde{G}}| = 3 \), as $\tilde{G'}/\tilde{N}$ acts on $\tilde{N}$ Frobeniusly. As \( \tilde{N} = \langle x^{\tilde{G}} \rangle \), we have  \( \tilde{N} \) is a $\tilde{G}/\tilde{N}$-module with  dimension at most \( 3 \), which means that \( c \leq 3 \).     	
   	If \( c = 3 \), then \( |G'/N| = 3 \) does not divide \( |\tilde{N}| - 1 = 2^3 - 1 \), yielding a contradiction, as \( \tilde{G'}/\tilde{N} \) acts on \( \tilde{N} \) Frobeniusly. Thus \( c = 2 \) and \( \tilde{G} \cong S_4 \), as desired. Since none of the non-linear irreducible characters of \( S_4 \) vanish on \( \text{Van}(S_4) \), we conclude that \( \{\theta_1, \theta_2\} \subseteq \Irr(\tilde{G}) \) and so \( M = \ker \theta_1 \).

   \smallskip
   	 
   	 {\bf Step 10.} $\ker\theta_1=1$. In particular $G\cong S_4$.
   	 
   	 \smallskip

   	Assuming, for the sake of contradiction, that \(\ker\theta_1 \neq 1\). By Step 9, this implies that \(G/\ker\theta_1 \cong S_4\). Without loss of generality, we can assume \(\ker\theta_1\) to be a minimal normal subgroup of \(G\). Since both \(\theta_1\) and \(\theta_2\) vanish on \(G' - N\), we can refer to Steps 1 and 2, which show that every irreducible character in \(\Irr(G|G') - \{\chi\}\) also vanishes on \(G' - N\). Therefore, applying the second orthogonality relation, we find that \( |C_G(x)| = |G:G'| + |\chi(x)|^2 = 2 + (-1)^2 = 3 = |G'/N| \) for all \(x \in G' - N\). This indicates that \(G'/N\) acts on \(N\)  Frobeniusly, by Lemma \ref{Frobenius}, leading us to conclude that \(N\) is nilpotent.

   	 First  we  assume that \(\ker\theta_1\) is a \(2\)-group, leading us to the conclusion that \(\ker\theta_1 \subseteq \mathbf{Z}(N)\).
   	%%%%%%%%%%5
   	% From the minimal nature of \(\ker\theta_1\), it follows that \(N' = 1\) or \(N' = \ker\theta_1\).
   	%%%%%%%%%%%%%%%%5
   	%We show that the Sylow \(3\)-subgroups of \(G\)   act Frobeniously on \(\ker\theta_1\). Suppose there exists an element \(x\in \ker\theta_1\) for which the order of \(C_G(x)\) is divisible by \(3\). This implies that \(x\) must belong to \({\bf Z}(L)\), where \(L/N\) is a chief factor of \(G\) isomorphic to \(C_3\). Consequently, \(\ker\theta_1 \cap {\bf Z}(L) \neq 1\), which suggests that \(\ker\theta_1 \subseteq {\bf Z}(L)\). Additionally, since \(\ker\theta_1 \cap {\bf Z}(Q) \neq 1\) for any \(Q \in \Syl_2(G)\), it follows that \(\ker\theta_1 \cap {\bf Z}(LQ) \neq 1\). Given that \(LQ = G\), we deduce \(\ker\theta_1 \subseteq {\bf Z}(G)\), concluding that \(|\ker\theta_1| = 2\). Therefore, \(|G| = 48\), and upon checking with GAP, we reach a contradiction.
   	   	 Let $1\not =x\in \ker\theta_1\cap {\bf Z}(Q)$ where $Q\in \Syl_2(G)$. Thus  $|x^G|=3$, as $G'/N$ acts on $N$ Frobeniusly. As $\ker\theta_1=\langle x^G\rangle $, we get that $|\ker\theta_1|=2^a$ for some $a\leq 3$. Recalling that  $G'/N\cong C_3$ is acting Frobeniusly on $\ker\theta_1$, we have $3\mid(2^a-1)$  and so $a=2$. Thus $|G|=96$ and  upon checking with GAP, we reach a contradiction.

   	Now, let's examine the situation where \((|\ker\theta_1|, |N/\ker\theta_1|) = 1\). Given that \(N\) is nilpotent, we have \(C_N(\ker\theta_1) = N\). Therefore, we can express \(N\) as \(N = \ker\theta_1 \times K\), where \(K \cong N/\ker\theta_1\). This means that \(\Irr(G/K)\) does not include \(\theta_1\) or \(\theta_2\), but it does contain \(\chi\in \Irr(G/K/N/K)\). Considering the structure of \(G/\ker\theta_1 \cong S_4\), we find that \(\Van(\chi) \neq \Van(G/K)\), and there exists an element \(h \in (G' - N) \cap \Van(G)\) such that \(\chi(h) \neq 0\).
   	
   	Recall that by Steps 1 and 2, for all characters \(\eta \in \Irr(G|G') - \{\theta_1, \theta_2, \chi\}\), we have \(\Van(\eta) = \Van(G)\). Note that \(|\Irr(G/K|G'/K)| \geq 2\) and \(\Irr(G/K)\) contains only one character \(\chi\), which satisfies \(\Van(\chi) \neq \Van(G/K)\). According to Theorem A, this situation suggests that \(G/K\) must be isomorphic to a Frobenius group with a \(Q_8\) as a Frobenius complement. However, since \(G/K\) includes a quotient  $G/N\cong S_3$, we arrive at a contradiction.   $\blacksquare$

   	 \smallskip

 \smallskip
  
{\bf The Proof of Theorem D.} By Theorem C, we determine that  $G$ is solvable. Now, we explore the situation where we assume $|\Irr(G|G')|>1$ and $\Gamma_{nv}(G)$ forms a star configuration. Let $M$ be a maximal subgroup among normal subgroup with the property that  $G/M$ is non-abelian. It is important to remind that for all characters $\eta \in \Irr(G/M|G'M/M)$, we have $\Van(\eta)=G-G'M$, by \cite[Lemma 12.3]{Isaacs}. According to Theorem \ref{triple}, we  just need to show that  $|\Irr(G/M|G'M/M)|=1$. 

% and assuming $\Irr(G/M|G'M/M)=\{\chi\}$,  all characters in $\Irr(G|G')-\{\chi\}$ are vanishing on $\Van(G)\cap (G'M-M)$. Moreover there is an element $g_0\in \Van(G)\cap M $ and a unique character $\theta\in \Irr(G|G')-\{\chi\}$ that are not vanishing on $g_0$.  

% First, suppose $|\Irr(G/M|G'M/M)|>2$.  If we find that $\Van(G)\cap MG'\neq \emptyset$, this would lead to $\Irr(G/M|G'M/M)$ forming a complete graph, which presents a contradiction. Thus, we can assume $G'M\subseteq \Nv(G)$. As a result, $\Van(G)=\Van(\eta)$ for all $\eta \in \Irr(G/M|G'M/M)$ indicates the presence of an isolated vertex, which again leads to a contradiction. 

 We are examining the scenario where \( |\Irr(G/M|G'M/M)| \geq 2 \). If there exists a non-linear character \( \chi \in \Irr(G|M) \) such that \( \chi(g) \neq 0 \) for some \( g \in \Van(G) \cap G'M \), this would imply that \( \Irr(G/M|G'M/M) \cup \{\chi\} \) forms a triangle, which would result in a contradiction. Consequently, we deduce that for every non-linear character \( \chi \in \Irr(G|M) \), it must hold that \( \chi \) vanishes on \( \Van(G) \cap G'M \).
  Thus, for all non-linear characters \( \chi \in \Irr(G|M) \) and \( \theta \in \Irr(G/M|G'M/M) \), we establish that \( \Van(\chi) \cup \Van(\theta) = \Van(G) \). This indicates that there is no edges connecting vertices outside of   \( \Irr(G/M|G'M/M) \)  to those within it. Given our hypothesis on $\Gamma_{nv}(G)$, we conclude that \( M = 1 \) and that the vanishing set of all non-linear irreducible characters is \( G - G'M = G - G' \). Consequently, all vertices are isolated, leading to a contradiction. $\blacksquare$  


\begin{thebibliography}{1}
	\bibitem{ourself} Z. Akhlaghi, K. Aziziheris,  S. Y. Madanha, Groups having irreducible characters with many zeros, J. Algebra  709 (2027) 749-780.  
\bibitem{Ber96} Y. Berkovich, Finite solvable groups in which only two nonlinear irreducible characters have equal degrees, J. Algebra 184 (1996) 584--603.
\bibitem{BCH92} Y. Berkovich, D. Chillag, M. Herzog, Finite groups in which the degrees of nonlinear irreducible characters are distinct, Proc. Amer. Math. Soc. 115(4) (1992) 955--959.	
\bibitem{ema}
M. Bianchi, D. Chillag, M. L. Lewis, E. Pacifici,
\newblock Character degree graphs that are complete graphs,
\newblock  Proc. Amer. Math. Soc.
135(3) (2007)
671--676.

\bibitem{zsig}
G. D. Birkhoff and H. S. Vandiver,
On the integral divisors of $a^n-b^n$,
Annals of Mathematics
5 (1904) 173--180.

	\bibitem{BDS} D. Bubboloni, S. Dolfi, P. Spiga, Finite groups whose irreducible characters vanish only on $p$-elements, J. Pure and Applied Algebra  213 (2009) 370--376. 

%\bibitem{brough} J. Brough, On vanishing criteria that control finite group, J. Algebra 458 (2016) 207--215.

%\bibitem{Carter} R. Carter, Finite Groups of Lie Type. Conjugacy Classes and Complex Characters(John Wiley and Sons, New York, 1985.

\bibitem{atlas} J.  H. Conway, R. T. Curtis, S. P. Norton, R. A. Parker and R. A. Wilson, Atlas of Finite Groups (Oxford University Press, Eynsham, 1985).



%\bibitem{ema}
%M. Bianchi, D. Chillag, M. L. Lewis and E. Pacifici, ‘Character degree graphs that are complete graphs’, Proc. Amer.
%Math. Soc., 135(3) (2007), 671–676.

%\bibitem{berkovich} Y.G. Berkovich and E.M. Zhmud’, Characters of Finite Groups. Part 1, Translated from the
%Russian manuscript by P. Shumyatsky and V. Zobina, Translations of Mathematical Mono-graphs 172, American Mathematical Society, Providence, RI, 1998.



%\bibitem{BDS} D. Bubboloni, S. Dolfi, P. Spiga, Finite groups whose irreducible characters vanish only on $p$-elements, J. Pure Appl. Algebra, 213(2009) 370--376.

%\bibitem{Atlas} J. H. Conway, R. T. Curtis, S. P. Norton, R. A. Parker, R. A. Wilson, ATLAS of Finite Groups, Oxford Univ. Press (Clarendon), Oxford, New York, 1985.

%\bibitem{selp} S. Dolfi, E. Pacifici, L. Sanus and P. Spiga, On the vanishing prime graph of finite groups, J. London Math. Soc. 82(2) (2016) 167-183.

\bibitem{silvio1} S. Dolfi,
E. Pacifici,
L. Sanus, P. Spiga, On the orders of zeros of irreducible characters, J. Algebra 321 (2009) 345--352.

%\bibitem{DPSS10b} W. Feit, G.M. Seitz, On finite rational groups and related topics, Illinois J. Math. 33 (1989) 103–131.

%\bibitem{silvio} S. Dolfi,  G. Navarro, 
%E. Pacifici,
%L. Sanus,
%P.  H. Tiep,  Non-vanishing elements of finite groups, J. Algebra, 323 (2010), 540--545.

%\bibitem{feit} W. Feit and G. M. Seitz, On finite rational groups and related topics, Illinois Journal of Mathematics 33 (1989), 103--131.

%\bibitem{regularsymmetric} J. B. Fawcett, E. A. \'OBrien, J.  Saxl, Regular orbits of symmetric and alternating
%groups, J. Algebra, 468 (2016), 21--52.

\bibitem{ono} A. Granville and  K. Ono,  Defect zero $p$-blocks for finite simple groups, Trans. Amer. Math. Soc.  438 (1996) 331--348.


%\bibitem{gagula} S. M.  Gagola, Characters vanishing on all but two conjugacy classes Pacific J. Math. 109 (1983), 363–385.

\bibitem{gap}
The GAP~Group, GAP -- Groups, Algorithms, and Programming,
	Version 4.15.1;
2025,
\url{https://www.gap-system.org}.


\bibitem{centraltype} R. B. Howlett and I. M. Isaacs, On groups of central type, Math. Z. 179(4) (1982) 555–569.

\bibitem{hung} N. N. Hung , A. Moreto, L.  Morotti, Common zeros of irreducible characters,  Journal of the Australian Mathematical Society. 117(2) (2024) 105--129.


\bibitem{Isaacs} I. M. Isaacs, {Character Theory of Finite Groups}, New York NY: Academic Press 1976.

\bibitem{JK} G. James and A. Kerber, The Representation Theory of the Symmetric Group, Encyclopedia of
Mathematics and its Applications, vol. 16, Addison-Wesley Publishing Co., Reading, Mass. 1981.




%\bibitem{INW99} I. M. Isaacs, G. Navarro and T. R. Wolf, Finite group elements where no irreducible character vanishes, J. Algebra, 222(1999) 413--423.

\bibitem{KB} L. Kazarin and Y. Berkovich, On Thompson's Theorem, J. Algebra 220 (1999) 574--590.



%\bibitem{GLM} E. Gianneli, S. Law and  E. Mcdowell, Degrees and prime power orders zeros of characters of symmetric and alternating groups, Bull. London Math. Soc. (2025), 1–21, DOI:10.1112/blms.70160.
%\bibitem{Lew14} M. L. Lewis,
%Classifying Camina groups: A theorem of Dark and Scoppola, Rocky Mountain Journal of Mathematics 44 (2014) 591--597.

%\bibitem{LMPST} M. L. Lewis, L. Morotti, E. Pacifici, L. Sanus, H. P. Tong-Viet, Groups with conjugacy class that is the difference of two normal subgroups: arXive:2603.25407v1. 

%\bibitem{LM} M. L. Lewis, L. Morotti, E. Pacifici, L.  Sanus and H. P. Tong-Viet,   On common zeros of characters of finite groups, Algebr. Represent. Theory, 28 (2025), 395--406.
%https://doi.org/10.1007/s10468-025-10320-1.

%\bibitem{LM} Mark L. Lewis, Lucia Morotti, Emanuele Pacifici, Lucia Sanus, and Hung P. Tong-Viet,
%On common zeros of characters of finite groups,  Algebr. Represent. Theory 28 (2025), no. 2, 395--406.



%\bibitem{Liu-Lu} Y. Liu and Z. Q. Lu, Nonsolvable $D_2$-groups, Acta. Math. Sin. Engl. Ser. 31(11) (2015) 1683–1702.


\bibitem{MNO} G. Malle, G. Navarro, and J. Olsson, Zeros of characters of finite groups,
J. Group Theory 3 (2000) 353--368.


%\bibitem{hung-tong} H. P. Tong-Viet, Finite nonsolvable groups with many distinct character degrees, Pacific J. Math. 268(2) (2014) 477–492.

 



%\bibitem{kohler}  C. K\"{o}hler, H. Pahlings, Regular orbits and the k(GV )-problem, in: Groups and Computation III:
%Proceedings of the International Conference at the Ohio State University, 1999, 2001-209-228.


%\bibitem{Lad} F. Ladisch, Groups with anticentral elements, Comm. Algebra, 36(2008), 2883--2894.

%\bibitem{Lew09} M. L. Lewis, Generalizing Camina groups and their character tables, J. Group Theory, 12 (2009), 209--218.

%\bibitem{Lew} M. L. Lewis, The vanishing-off subgroup, J. Algebra, 321 (2009), 1312--1325.


%\bibitem{moratti-tong-viet} L. Morotti, H. P. Tong-Viet, Proportions of vanishing elements in finite groups, Israel J. Math. 246 (2021), 441–457 DOI: 10.1007/s11856-021-2256-4.

%\bibitem{moreto-tiep} A. Moreto, P. H. Tiep, Nonsolvable groups have a large proportion of vanishing elements, israel. J. Math.  254 (2023), 229–242
%DOI: 10.1007/s11856-022-2395-2.


%\bibitem{Zhang} P. P. Palfy, Groups with two nonlinear irreducible representation, Ann. Uni. Sci. Budapest. Eotvos.Sect. math. 24  (1981) 181-192. 

%\bibitem{hung2016} N. N. Hung, P. H. Tiep, Irreducible characters of even degree
%and normal Sylow 2-subgroups, Math. Proc. Camb. Phil. Soc. (2017), 162, 353–365.
%\bibitem{berkovich1} Y. Berkovich, Generalizations of M-groups, Proc. Amer. Math. Soc. 123 (1995),
%3263–3268.

%\bibitem{berkovich2}  Y. Berkovich, On the Taketa theorem, J. Algebra 182 (1996), 501–510.

%\bibitem{MNO00} G. Malle, G. Navarro and J. B. Olsson, Zeros of characters of finite groups, \emph{J. Group Theory} \textbf{3} (2000) 353--368.
%\bibitem{navarro} G. Navarro, Characters and Blocks of Finite Groups. Cambridge University Press, 1998.

\bibitem{seitz} G. M. Seitz, Finite groups having only one irreducible representation of degree greater than one, Proc. Am. Math. Soc. 19 (1986) 459--461. 




%\bibitem{Schmid} P. Schmid, Extending the Steinberg representation, J. Algebra 150 (1992) 254–256.





%\bibitem{isaacs1} I. M. Isaacs, Generalizations of Taketa’s theorem on the solvability of M-groups,
%Proc. Amer. Math. Soc. 91 (2) (1984), 192–194.

%\bibitem{isa} I.M. Isaacs, M. Loukaki, and A. Moret\'o, The average degree of an irreducible character of a
%finite group, Israel J. Math. 197 (2013), 55–67.



%\bibitem{lmt} T. Le, J. Moori and H. P. Tong-Viet, A generalization of M-group, J. Algebra 374
%(2013), 27–41.



%\bibitem{feit} W. Feit, Groups which have a faithful representation of degree less than p −1, Trans. Amer. Math. Soc. 112 (1964) 287–303.

%\bibitem{Feit} W. Feit, The current situation in the theory of finite simple groups, Actes Congr. Internat. Math. 1 (1970) 55–93.

%\bibitem{f.t}W. Feit, J.G. Thompson, Groups which have a faithful representation of degree less than $(p −1)/2$, Pacific J. Math. 11 (1961) 1257–1262.



%\bibitem{moreto} A. Moret\'o, H. N.  Nguyen, On the average character degree of
%finite groups, Bulletin of the London Mathematical Society, 46 (3) (2014), 454–462.

%\bibitem{phy} H. Pan, N. N. Hung and Y. Yang, On the sum of character degrees coprime to p and p-nilpotency of finite groups, J. Pure and applied algebra, 225(9)  (2021), 1-14.
%\bibitem{qp} G. Qian, Two results related to the solvability of M-groups, J. Algebra 323 (2010), 3134-3141.

%\bibitem{qian} G. Qian, On the average character degree and the average class size in finite groups, J. Algebra 423 (2015), 1191-1212.

%\bibitem{qianpri} G. Qian, Nonsolvable groups with few primitive character degrees, J.  Group Theory  21 (2) (2018), 295-318.



%\bibitem{tiep}P.H. Tiep, A.E. Zalesskii, Minimal characters of finite classical groups, Comm. Algebra 24(6) (1996) 2093–2167
\end{thebibliography}
\end{document}